\theoremstyle{plain}
\newtheorem{thm}{Theorem}[section]
\newtheorem{cor}[thm]{Corollary}
\newtheorem{lem}[thm]{Lemma}
\newtheorem{prop}[thm]{Proposition}
\newtheorem{conj}[thm]{Conjecture}
\def\@rst #1 #2other{#1}
\newcommand\MR[1]{\relax\ifhmode\unskip\spacefactor3000 \space\fi
  \MRhref{\expandafter\@rst #1 other}{#1}}
\newcommand{\MRhref}[2]{\href{http://www.ams.org/mathscinet-getitem?mr=#1}{MR#2}}
\theoremstyle{definition}
\newtheorem{defn}[thm]{Definition}
\newtheorem{remark}[thm]{Remark}
\newtheorem{prob}[thm]{Problem}
\numberwithin{equation}{section}
\newcommand{\dsb}{\begin{adjustwidth}{2.5em}{0pt}
\begin{footnotesize}}
\newcommand{\dse}{\end{footnotesize}
\end{adjustwidth}}
\newcommand{\ssb}{\begin{adjustwidth}{2.5em}{0pt}}
\newcommand{\sse}{\end{adjustwidth}}
\newcommand{\aryb}{\begin{eqnarray*}}
\newcommand{\arye}{\end{eqnarray*}}
\def\alb#1\ale{\begin{align*}#1\end{align*}}
\def\allb#1\alle{\begin{align}#1\end{align}}
\newcommand{\eqb}{\begin{equation}}
\newcommand{\eqe}{\end{equation}}
\newcommand{\eqbn}{\begin{equation*}}
\newcommand{\eqen}{\end{equation*}}
\newcommand{\BB}{\mathbbm}
\newcommand{\ol}{\overline}
\newcommand{\op}{\operatorname}
\newcommand{\frk}{\mathfrak}
\newcommand{\eqD}{\overset{d}{=}}
\newcommand{\ep}{\varepsilon}
\newcommand{\rta}{\rightarrow}
\newcommand{\wt}{\widetilde}
\newcommand{\wh}{\widehat}
\newcommand{\mcl}{\mathcal}
\newcommand{\bdy}{\partial}
\newcommand{\perm}{{\boldsymbol{\pi}}}
\newcommand{\cc}{{\mathbf{c}}}
\newcommand{\ccM}{{\mathbf{c}_{\mathrm M}}}
\DeclareMathOperator{\Perm}{Perm}
\DeclareMathOperator{\occ}{occ}
\newcommand{\pocc}{\widetilde{\occ}}
\let\originalleft\left
\let\originalright\right
\renewcommand{\left}{\mathopen{}\mathclose\bgroup\originalleft}
\renewcommand{\right}{\aftergroup\egroup\originalright}
\title{Permutons, meanders, and SLE-decorated Liouville quantum gravity}
 \date{ }
 \author{
\begin{tabular}{c} Jacopo Borga\\[-3pt]\small MIT \end{tabular}
\begin{tabular}{c} Ewain Gwynne\\[-3pt]\small University of Chicago \end{tabular} 
\begin{tabular}{c} Xin Sun\\[-3pt]\small Peking University \end{tabular} 
}
\begin{document}

\maketitle

\begin{abstract}
We study a class of random permutons which can be constructed from a pair of space-filling Schramm-Loewner evolution (SLE) curves on a Liouville quantum gravity (LQG) surface. This class includes the skew Brownian permutons introduced by Borga (2021), which describe the scaling limit of various types of random pattern-avoiding permutations. Another interesting permuton in our class is the meandric permuton, which corresponds to two independent SLE$_8$ curves on a $\gamma$-LQG surface with $\gamma  = \sqrt{\frac13 \left( 17 - \sqrt{145} \right)}$. Building on work by Di Francesco, Golinelli, and Guitter (2000), we conjecture that the meandric permuton describes the scaling limit of uniform meandric permutations, i.e., the permutations induced by a simple loop in the plane which crosses a line a specified number of times.

We show that for any sequence of random permutations which converges to one of the above random permutons, the length of the longest increasing subsequence is sublinear. This proves that the length of the longest increasing subsequence is sublinear for Baxter, strong-Baxter, and semi-Baxter permutations and leads to the conjecture that the same is true for meandric permutations. We also prove that the closed support of each of the random permutons in our class has Hausdorff dimension one. Finally, we prove a re-rooting invariance property for the meandric permuton and write down a formula for its expected pattern densities in terms of LQG correlation functions (which are known explicitly) and the probability that an SLE$_8$ hits a given set of points in numerical order (which is not known explicitly). We conclude with a list of open problems. 
\end{abstract}

 

\tableofcontents

\section{Introduction}
\label{sec-intro}

\subsection{Overview} 
\label{sec-overview}

A \textbf{permuton} is a probability measure $\perm$ on $[0,1]^2$ whose one-dimensional marginals are each equal to Lebesgue measure on $[0,1]$. 
Permutons are of interest because they describe the scaling limits of various types of random permutations (see \cite[Section 1.1]{borga-skew-permuton} and the references therein) and they identify a natural space where one can study large deviations principles for permutations (as done in \cite{Mukherjee-estimation,Kenyon-Permutations-densities,Borga-LDP-perm}). 
To be more precise, each permutation is associated to a permuton as follows. 
 
\begin{defn} \label{def-permutation}
For a permutation $\sigma$ on $[1,n] \cap\BB Z$, we define the \textbf{size} of $\sigma$ by $|\sigma| = n$. We use the one-line notation to write permutations, that is, if $\sigma$ is a permutation of size $n$ then we write $\sigma=\sigma(1),\dots,\sigma(n)$. 
We define the \textbf{permuton associated with $\sigma$} to be the measure $\perm_\sigma$ on $[0,1]^2$ which is equal to $|\sigma| $ times Lebesgue measure on
\eqbn
\bigcup_{j =1}^{|\sigma|} \left[ \frac{j-1}{|\sigma|} , \frac{j}{|\sigma|} \right] \times \left[ \frac{\sigma(j)-1}{|\sigma|} , \frac{\sigma(j)}{|\sigma|} \right] .
\eqen
\end{defn}

\begin{figure}[ht!]
\begin{center}
\includegraphics[width=.3\textwidth]{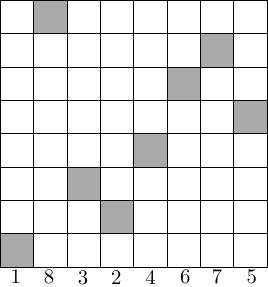}  
\caption{\label{fig-permuton} The permuton $\perm_\sigma$ associated with the permutation $\sigma=1,8,3,2,4,6,7,5$ is equal to 8 times the Lebesgue measure on the gray subset of $[0,1]^2$.}
\end{center}
\vspace{-3ex}
\end{figure}

See Figure~\ref{fig-permuton} for an illustration. 
We equip the space of permutons with the usual weak topology on measures, and we say that a sequence of permutations converges to a permuton $\perm$ if their associated permutons converge weakly. We refer to~\cite[Remark 2.1.2]{borga-thesis} for a brief history of the theory of permutons.

In this paper, we will study a certain class of random permutons which we define precisely in Section~\ref{sec-permuton-def}. We will be especially interested in two special cases.
\begin{itemize}
\item The \textbf{meandric permuton} is a random permuton which we define in Definition~\ref{def-meander-permuton} and which we conjecture describes the scaling limit of uniform meandric permutations, i.e., the  permutations induced by a pair of intersecting Jordan curves (see Section~\ref{sec-meander} for more details).
\item The \textbf{skew Brownian permutons} are a two-parameter family of random permutons introduced by Borga in~\cite{borga-skew-permuton} which describe the scaling limits of various types of random pattern-avoiding permutations (see Section~\ref{sec-skew-permuton} for more details).
\end{itemize}  
For various subsets of our class of permutons, including the two special cases listed above, we will show that the length of the longest increasing subsequence for any sequence of permutations converging to the permuton is sublinear (Theorem~\ref{thm-lis}); and that the dimension of the closed support of the permuton is one (Theorem~\ref{thm-permuton-dim}). We will also establish a re-rooting property for the meandric permuton (Theorem~\ref{thm-permuton-re-root}) and a formula for its pattern densities in terms of hitting probabilities for SLE$_8$ and the correlation functions of the LQG area measure (Theorem~\ref{thm:density}). 

The common feature of all the permutons we consider is that they can be represented in terms of a \textbf{Liouville quantum gravity (LQG)} surface decorated by a pair of coupled \textbf{Schramm-Loewner evolution (SLE)} curves.  
There is a vast literature concerning SLE and LQG, but not much prior knowledge of this literature is needed to understand this paper. All of the necessary background knowledge about SLE and LQG is explained in Section~\ref{sec-background}. Our proofs will only use relatively simple statements about these objects which we re-state as necessary and which can be taken as black boxes. 
\bigskip

\noindent\textbf{Acknowledgments.} {We thank two anonymous referees and Yuanzheng Wang for helpful comments on an earlier version of this paper.} We thank Rick Kenyon for helpful discussions on the uniform spanning tree. We thank Tony Guttmann and Iwan Jensen for helpful discussions on numerical simulations. We thank Zhihan Li for coding the Markov chain Monte Carlo algorithm from \cite{heitsch2011meander} which was used to obtain the pictures in Figure~\ref{sim-menders-permuton} and the numerical simulations for Conjecture~\ref{conj-meander-lis-precise}.
Part of the work on this paper was carried out during visits by J.B.\ to Chicago and Penn, by E.G.\ to Stanford, and by X.S.\ to Stanford. We thank these institutions for their hospitality. J.B.\ was partially supported by NSF grant DMS-2441646. E.G.\ was partially supported by a Clay research fellowship. X.S. was supported by the NSF grant DMS-2027986 and the NSF Career Award 2046514. 

\subsection{Permutons constructed from SLEs and LQG} 
\label{sec-permuton-def}

Let us now define the family of permutons which we will consider. 
Fix parameters $\gamma \in (0,2)$ and $\kappa_1,\kappa_2 > 4$. Let $h$ be a random generalized function on $\BB C$ corresponding to a singly marked unit area $\gamma$-Liouville quantum sphere, and let $\mu_h$ be its associated $\gamma$-LQG area measure. We will review the definitions of these objects in Sections~\ref{sec-lqg} and~\ref{sec-sphere}, but for the time being the reader can just think of $\mu_h$ as a random, non-atomic, Borel probability measure on $\BB C$ which assigns positive mass to every open subset of $\BB C$. 

Independently from $h$, let $(\eta_1,\eta_2)$ be a random pair consisting of a whole-plane space-filling SLE$_{\kappa_1}$ curve and a whole-plane space-filling SLE$_{\kappa_2}$ curve, each going from $\infty$ to $\infty$. We will review the definition of these random curves in Section~\ref{sec-sle}, but for the time being the reader can just think of $(\eta_1,\eta_2)$ as a pair of random non-self-crossing space-filling curves in $\BB C$ which each visit almost every point of $\BB C$ exactly once. We parametrize each of $\eta_1$ and $\eta_2$ by $\mu_h$-mass, i.e., in such a way that $\mu_h(\eta_1([0,t])) = \mu_h(\eta_2([0,t])) = t$ for each $t \in [0,1]$.

We emphasize that the coupling of our two SLE curves, viewed modulo time parametrization, is arbitrary. We will be interested in cases where the two curves determine each other as well as the case where the two curves are independent.

Let $\psi : [0,1]\rta[0,1]$ be a Lebesgue measurable function such that 
\eqb \label{eqn-psi-property}
\eta_1(t) = \eta_2(\psi(t)),\quad\forall t \in [0,1] .
\eqe 
The \textbf{permuton associated with $(h,\eta_1,\eta_2)$} is the random probability measure $\perm$ on $[0,1]^2$ defined by
\eqb \label{eqn-permuton-def}
\perm(A) = \perm_{\eta_1,\eta_2}(A) = \op{Leb}\left\{ t\in [0,1] : (t,\psi(t)) \in A \right\} ,\quad \text{$\forall$ Borel sets $A\subset [0,1]^2$}, 
\eqe 
where $\op{Leb}$ denotes Lebesgue measure. Using the fact that $\mu_h$-a.e.\ point in $\BB C$ is hit only once by each of $\eta_1$ and $\eta_2$, is easy to see that the definition of $\perm$ does not depend on the choice of $\psi$ (see Lemma~\ref{lem-permuton-defined}). 

Before we state our main results, we discuss the two special cases of the construction in this subsection which are particularly interesting. See Figure~\ref{fig-venn-diagram} for a Venn diagram of the different permutons we will consider in this paper. 

{
\begin{remark}
Instead of parametrizing $\eta_1$ and $\eta_2$ by $\gamma$-LQG area, one could instead parametrize $\eta_1$ and $\eta_2$ by $\nu$-mass, where $\nu$ is a non-atomic probability measure on $\BB C$. Then the same construction as above would yield a different permuton which depends on $\nu$. In the case when $\nu$ is mutually absolutely continuous with respect to Lebesgue measure on $\BB C$, our proofs of Theorems~\ref{thm-lis} and~\ref{thm-permuton-dim} go through with minor modifications. However, we do not treat this case explicitly in this paper since we do not know of any natural permutation models which it is connected to. 
\end{remark}
}

\begin{figure}[ht!]
\begin{center}
\includegraphics[scale=1]{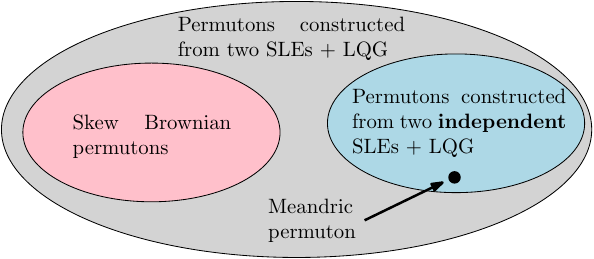}  
\caption{\label{fig-venn-diagram} Venn diagram of the different classes of random permutons considered in this paper. The outer grey oval represents all random permutons of the type constructed in \eqref{eqn-permuton-def}. The meandric permuton and the skew Brownian permutons are special cases of this construction. In particular, the meandric permuton falls in the subclass of permutons constructed from independent SLEs.
}
\end{center}
\vspace{-3ex}
\end{figure}

\subsection{Meanders, meandric permutations, and their conjectured connection to SLEs and LQG}
\label{sec-meander}

\begin{defn} \label{def-meander}
A \textbf{meander} of size $n\in\BB N$ is a simple loop $\ell$ (i.e., a homeomorphic image of the circle) in $\BB R^2$ which crosses the real line $\BB R  $ exactly $2n$ times and does not touch the line without crossing it, viewed modulo orientation-preserving homeomorphisms from $\BB R^2$ to $\BB R^2$ which take $\BB R$ to $\BB R$.
\end{defn}

See the left-hand sides of Figures~\ref{fig-meander}~and~\ref{sim-menders-permuton} for some illustrations of meanders. A meander of size $n$ can equivalently be thought of as a pair of Jordan curves in the sphere which intersect exactly $2n$ times and which do not intersect without crossing, viewed modulo orientation-preserving homeomorphisms from the sphere to itself. The correspondence with Definition~\ref{def-meander} is obtained by viewing $\BB R$ as a curve in the Riemann sphere. See the right-hand side of Figure~\ref{fig-meander}.

\begin{figure}[ht!]
\begin{center}
\includegraphics[width=1\textwidth]{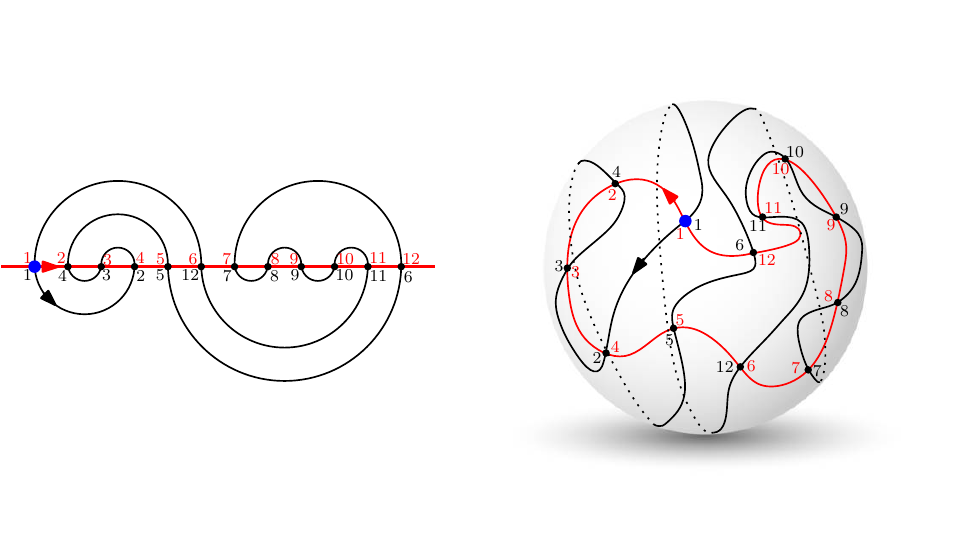}  
\caption{\label{fig-meander}\textbf{Left:} A meander of size $6$, with the intersection points of the loop and the line labeled by the order in which they are hit by the loop (in black) and the line (in red) starting at the blue vertex. The associated meandric permutation is $\sigma_{\ell}=1,4,3,2,5,12,7,8,9,10,11,6$ (in one-line notation); see Definition~\ref{def-meandric-perm} below. \textbf{Right:} The same meander of size $6$ seen as a pair of Jordan curves in the sphere.
}
\end{center}
\vspace{-3ex}
\end{figure}

The study of meanders dates back to at least the work of Poincar\'e~\cite{poincare-meander}, although the term ``meander'' was first coined by Arnold~\cite{arnold-meander}. Meanders are of interest in physics and computational biology as models of polymer folding~\cite{dgg-meander-folding}. There is a vast mathematical literature devoted to the enumeration of various types of meanders, which has connections to many different subjects, from combinatorics to theoretical physics and more recently to the geometry of moduli spaces \cite{Delecroix-enum-meand}. See~\cite{zvonkin-meander-survey} for a brief recent survey of this literature and~\cite{lacroix-meander-survey} for a more detailed account. Additionally, meanders are the connected components of \emph{meandric systems}, recently studied in \cite{ckst-noodle,fn-meander-system,Kargin-cycles-meander,Goulden-Asymptotics-meandric,Feray-components-mendric, bgp-meander-system}. 

The problem of enumerating \emph{all} meanders of size $n$ is open and seemingly quite difficult. But, it was conjectured by Di Francesco, Golinelli, and Guitter~\cite{dgg-meander-asymptotics}  that the number of meanders of size $n$ behaves asymptotically like $C \cdot A^n n^{-\alpha}$, where $\alpha = (29 + \sqrt{145})/12$. This conjecture was later numerically tested by Jensen and Gutmann~\cite{Jensen-num-meanders}, where they also proposed the numerical estimate of $A \approx 12.26$. 
We will discuss this conjecture in more detail in Section~\ref{sec-meander-conjecture}; see also Remark~\ref{remark-lqg-physics}. The best known rigorous bounds for the constant $A$ are $11.380\leq A \leq 12.901$, as proved in \cite{Albert-bounds-meanders}.

In this paper, we will be interested in the scaling limit of a uniform sample $\ell_n$ from the set of all meanders of size $n$. 
One way to formulate such a scaling limit is as follows.   
We associate with $\ell_n$ the planar map $M_n$ whose vertices are the $2n$ intersection points of the loop $\ell_n$ with the real line, whose edges are the segments of the loop or the line $\BB R$ between these intersection points (where we consider the two infinite rays of the line $\BB R$ as being an edge from the leftmost to the rightmost intersection point), and whose faces are the connected components of $\BB R^2 \setminus (\ell_n \cup \BB R)$. The planar map $M_n$ is equipped with two Hamiltonian paths $P_n^1, P_n^2 : [0,2n]\cap\BB Z\rta \{\text{vertices of $M_n$}\}$, one of which is induced by the left-right ordering of $\BB R$ and one of which is induced by $\ell_n$ (these are the black and red paths in Figure~\ref{fig-meander}). We view each of these paths as starting and ending at the leftmost intersection point of $\ell_n$ with $\BB R$. 

To talk about convergence, we can, e.g., view $(M_n,P_n^1,P_n^2)$ as a curve-decorated metric measure space (equipped with the graph metric and the counting measure on vertices) and ask whether it has a scaling limit with respect to the generalization of the Gromov-Hausdorff topology for curve-decorated metric measure spaces~\cite{gwynne-miller-uihpq}. As we will explain in Section~\ref{sec-meander-conjecture}, the arguments of~\cite{dgg-meander-asymptotics} lead to the following conjecture.

\begin{conj} \label{conj-meander}
Let $(M_n,P_n^1,P_n^2)$ be the random planar map decorated by two Hamiltonian paths associated with the uniform meander $\ell_n$ as above.
Then $(M_n,P_n^1,P_n^2)$ converges under an appropriate scaling limit to a Liouville quantum gravity (LQG) sphere with matter central charge $\ccM = -4$ (equivalently, from \eqref{eqn:lqg-ccM}, with coupling constant $\gamma = \sqrt{\frac13 \left( 17 - \sqrt{145} \right)}$), decorated by a pair of independent whole-plane SLE$_8$ curves from $\infty$ to $\infty$, each parametrized by LQG mass with respect to the quantum sphere. 
\end{conj}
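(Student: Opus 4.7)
The starting point is the classical observation (used by Di~Francesco--Golinelli--Guitter) that a meander of size $n$ is an ordered pair of non-crossing perfect matchings of $\{1,\dots,2n\}$ whose superposition forms a single loop. Each non-crossing matching is in bijection with a Dyck path of length $2n$, so $\ell_n$ is described by a pair of discrete excursions $(U_n,V_n)$ subject to the rare ``single-loop'' connectivity constraint. The plan splits into three stages: (i)~handle the relaxed model of uniform meandric systems, without the single-loop constraint, and identify its scaling limit as two independent space-filling SLE$_8$'s on a certain LQG surface; (ii)~track how conditioning on the single-loop event reweights this continuum limit and shifts $\gamma$ to the value predicted by matter central charge $\ccM = -4$; (iii)~assemble (i) and (ii) into Gromov--Hausdorff--Prokhorov convergence of the curve-decorated metric measure spaces $(M_n, P_n^1, P_n^2)$.

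For stage~(i), in the unconstrained model the two matchings are independent, so $(U_n, V_n)$ converges jointly to a pair of independent Brownian excursions. Each matching individually, by Mullin-type bijections, corresponds to a uniform (planar map, spanning tree) pair, whose scaling limit is a $\sqrt{2}$-LQG sphere decorated by a space-filling SLE$_8$ via Duplantier--Miller--Sheffield. I would promote these marginal limits to a joint limit of the map decorated by \emph{both} Hamiltonian paths via a four-process mating-of-trees / peeling encoding of $(M_n, P_n^1, P_n^2)$ that simultaneously tracks the two arch systems, taking recent convergence results for uniform meandric systems (and the references cited in Section~\ref{sec-meander}) as input for tightness and for identification of subsequential limits.

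Stage~(ii) is where the conditioning shifts the LQG parameter. Heuristically, the DGG Coulomb-gas computation predicts that the number of loops in a uniform meandric system of size $n$ concentrates around a linear-in-$n$ value with Gaussian fluctuations matching those of a $c=-4$ boundary CFT. Conditioning on a single loop is then an exponentially rare event whose effect in the continuum should be to reweight the limiting LQG measure by a Liouville-type partition function, thereby shifting the matter central charge and moving $\gamma^2$ from its stage~(i) value of $2$ to $(17-\sqrt{145})/3$. Independence of $\eta_1, \eta_2$ in the limit should survive the conditioning, because the loop-counting functional is symmetric in the two matchings and does not break the asymptotic independence of the two Brownian-excursion marginals. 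Stage~(iii) is then a standard compactness / identification argument, combining uniform tightness of the rescaled $(M_n, P_n^1, P_n^2)$ with uniqueness of a limit characterized by its mating-of-trees description.

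The main obstacle, by a wide margin, is stage~(ii): no rigorous technology currently exists for converting conditioning on an exponentially rare connectivity event in random planar maps into an explicit change of LQG coupling constant, and even the much weaker statement that the number of loops in a uniform meandric system concentrates at the rate predicted by DGG would essentially settle the DGG prediction for the growth constant $A$ --- itself a famous open problem. A secondary obstacle in stage~(i) is the absence of a clean bijection from meanders (or even meandric systems) to a pair of labeled trees in the style of KMSW, which forces any proof to proceed via surgery on meandric systems or to invent a new bijection as a prerequisite.
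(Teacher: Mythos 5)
The statement you are addressing is Conjecture~\ref{conj-meander}, which the paper explicitly does \emph{not} prove (``We will not make any direct progress toward a proof of Conjecture~\ref{conj-meander} in this paper''); the only supporting argument in the paper is the physics-level heuristic of Section~\ref{sec-meander-conjecture}. So the comparison here is between two heuristic programs, not two proofs.

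Your stage~(i) contains a conceptual error. A single non-crossing matching of $\{1,\dots,2n\}$ is in bijection with a plane tree (equivalently a Dyck path), not with a (planar map, spanning tree) pair; the Mullin bijection converts a \emph{pair} of trees into a (planar map, spanning tree) pair. Accordingly, when you mate the two independent trees coming from the two arch systems via \cite[Theorem~1.9]{wedges}, you obtain a $\sqrt{2}$-LQG sphere decorated by \emph{one} space-filling SLE$_8$ --- the interface between the two trees, which is the continuum analog of the line $P_n^1$ --- not a pair of independent SLE$_8$ curves. This is exactly how the paper justifies the $\ccM=-2$ scaling limit for a triangulation decorated by a Hamiltonian cycle on its dual in Section~\ref{sec-fully-packed-maps}. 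Moreover, the unconstrained model has no analog of the second Hamiltonian path $P_n^2$: without the single-loop constraint, the superposition of the two arch systems generically produces many loops, so there is no loop-traversal order to converge to. The intermediate scaling limit asserted in stage~(i) is therefore wrong in both the number of SLE curves and the mechanism by which a second curve would arise.

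Your stage~(ii) is likewise not the paper's heuristic, and it does not address the emergence of the second independent SLE$_8$. The argument in Section~\ref{subsec:DGG} (following Di~Francesco--Golinelli--Guitter and Jacobsen--Kondev) is a direct central-charge count on the constrained model: each of the two arch systems contributes $c=-2$ as a dense $O(0)$ loop model; the crossing constraint forces the dual of $M_n$ to be an Eulerian quadrangulation, which by the Jacobsen--Kondev height-function construction would add $+1$, but the same crossing constraint forces the three-dimensional height function onto a two-dimensional subspace, subtracting $1$; these two effects cancel, leaving $\ccM = 2\cdot(-2) = -4$ with two independent SLE$_8$ curves, one for each arch system. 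There is no step in which an intermediate $\sqrt{2}$-LQG limit is conditioned on a rare connectivity event and thereby reweighted to a new $\gamma$. A conditioning heuristic would additionally have to explain not only the shift in $\gamma$ but why the conditioned limit acquires a second independent space-filling curve, and your proposal does not address this.

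Your closing self-assessment of the difficulty is fair --- a rigorous proof along any route is far out of reach, and even the loop-count concentration you mention in stage~(ii) would settle a famous open problem. But as a heuristic plan the proposal is not viable, because the starting identification in stage~(i) is already incorrect before any conditioning enters.
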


An equivalent formulation of the conjecture is that $(M_n,P_n^1,P_n^2)$ should be in the same universality class as a uniform 3-tuple consisting of a planar map decorated by two spanning trees (and their associated discrete Peano curves). 
See Section~\ref{sec-meander-conjecture} for more details about the reasons behind the conjecture. We will not make any direct progress toward a proof of Conjecture~\ref{conj-meander} in this paper. Rather, we will study the conjectural limiting object directly.

\begin{remark} \label{remark-mismatched}
Most results concerning SLE-decorated Liouville quantum gravity are specific to the case when $\kappa \in \{\gamma^2,16/\gamma^2\}$~\cite{shef-zipper,wedges} (see~\cite{ghs-mating-survey} for a survey of such results). In the setting of Conjecture~\ref{conj-meander}, the values of $\gamma$ and $\kappa$ are \emph{mismatched} in the sense that $\kappa \notin \{\gamma^2,16/\gamma^2\}$. This case is much less well understood than the matched case.  
\end{remark}

Another possible perspective on the scaling limit of meanders is to look at the permutation associated with a meander $\ell$.

\begin{defn} \label{def-meandric-perm}
	For a meander $\ell$ of size $n$, the corresponding \textbf{meandric permutation} $\sigma_{\ell}$ is the permutation of $[1,2n]\cap\BB Z$ obtained as follows. Order the $2n$ intersection points of $\ell$ with $\BB R$ from left to right. Then, consider the second order in which these intersection points are hit by $\ell$ when we start $\ell$ from the leftmost intersection point and traverse it counterclockwise. Then $\sigma_\ell (i)=j$ if some intersection point is the $i$-th visited point by the first order and the $j$-th visited point by the second one.
\end{defn} 

\begin{figure}[ht!]
	\begin{center}
		\includegraphics[width=.25\textwidth]{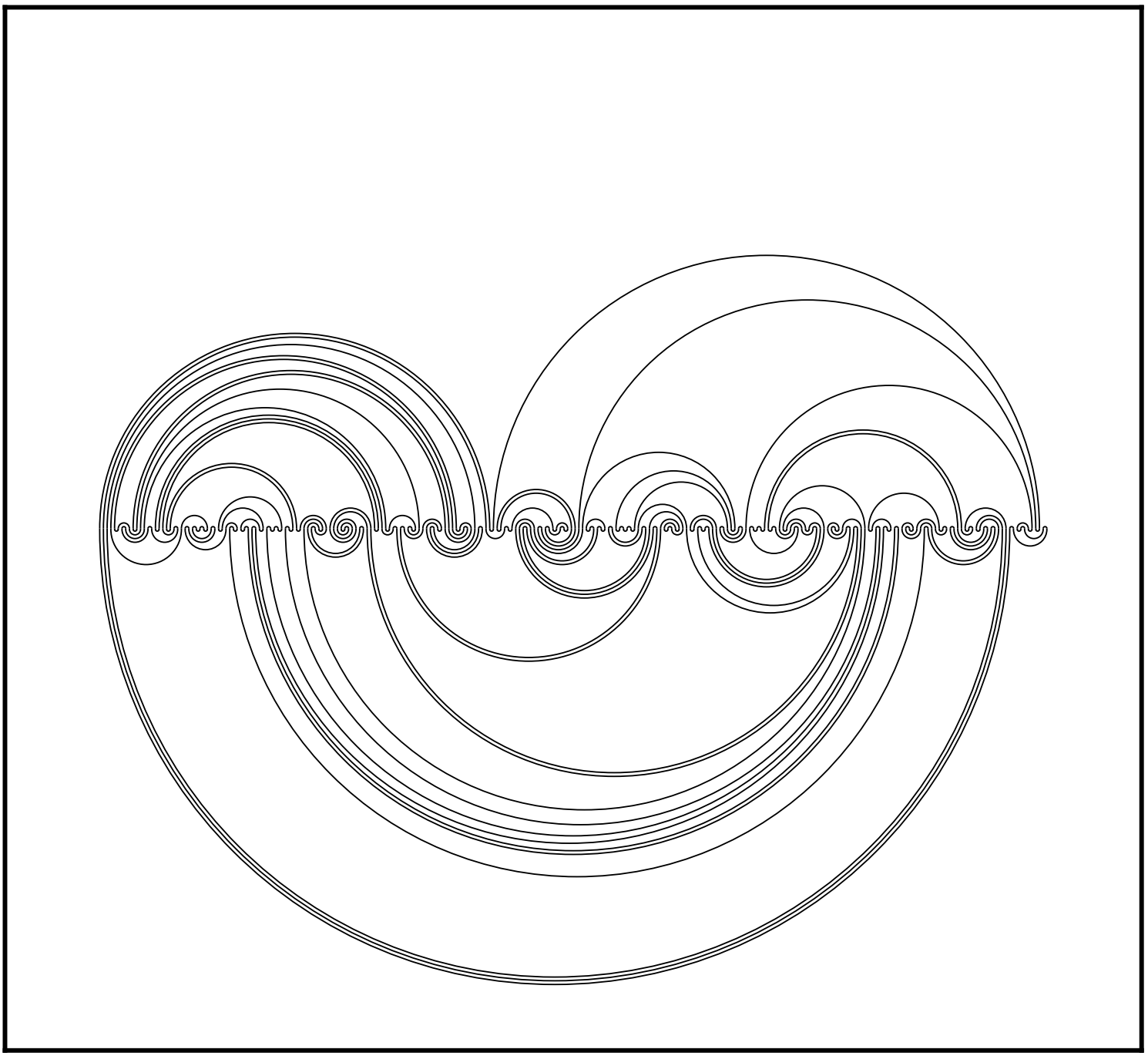}
		\includegraphics[width=.25\textwidth]{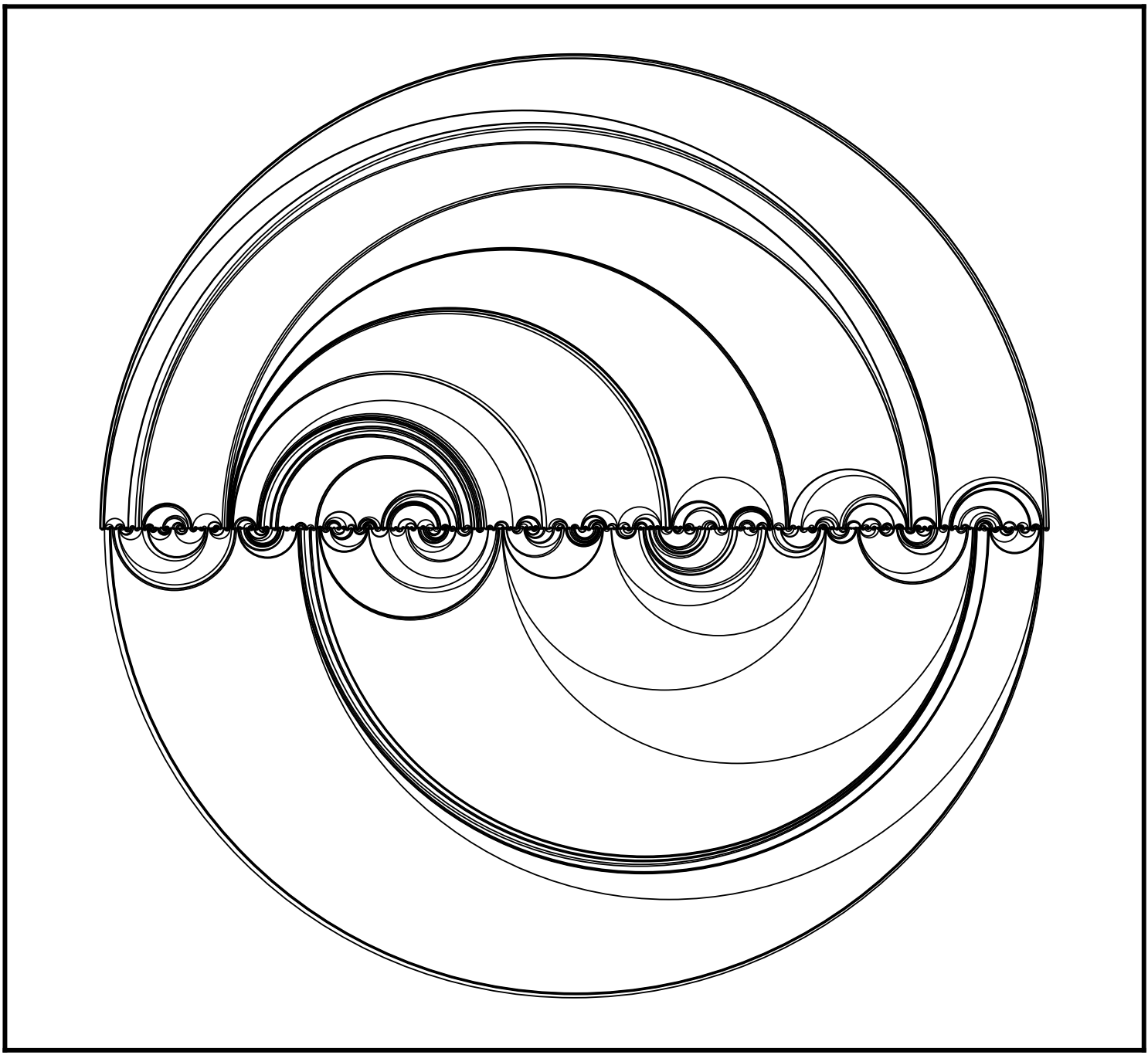}
		\includegraphics[width=.23\textwidth]{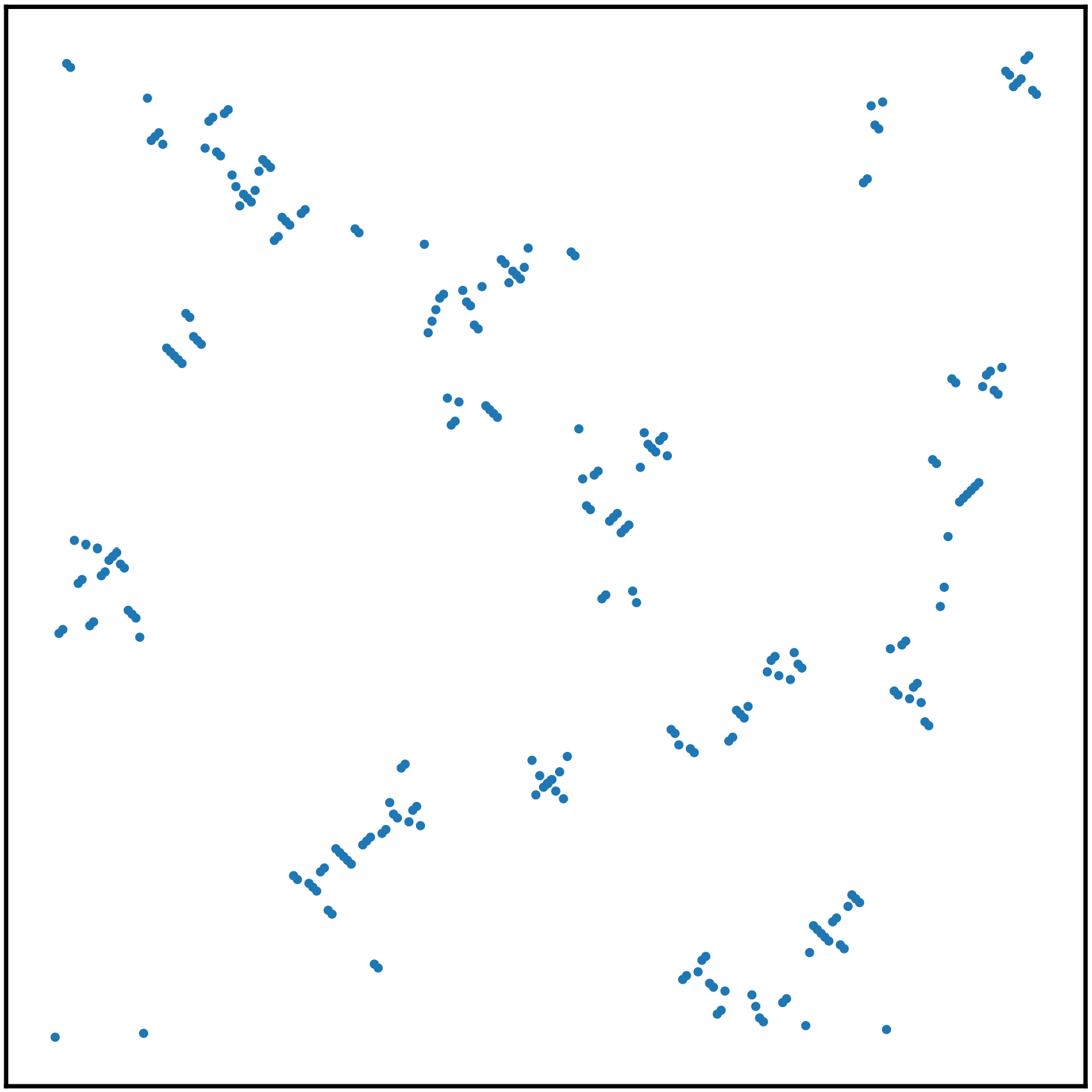}
		\includegraphics[width=.23\textwidth]{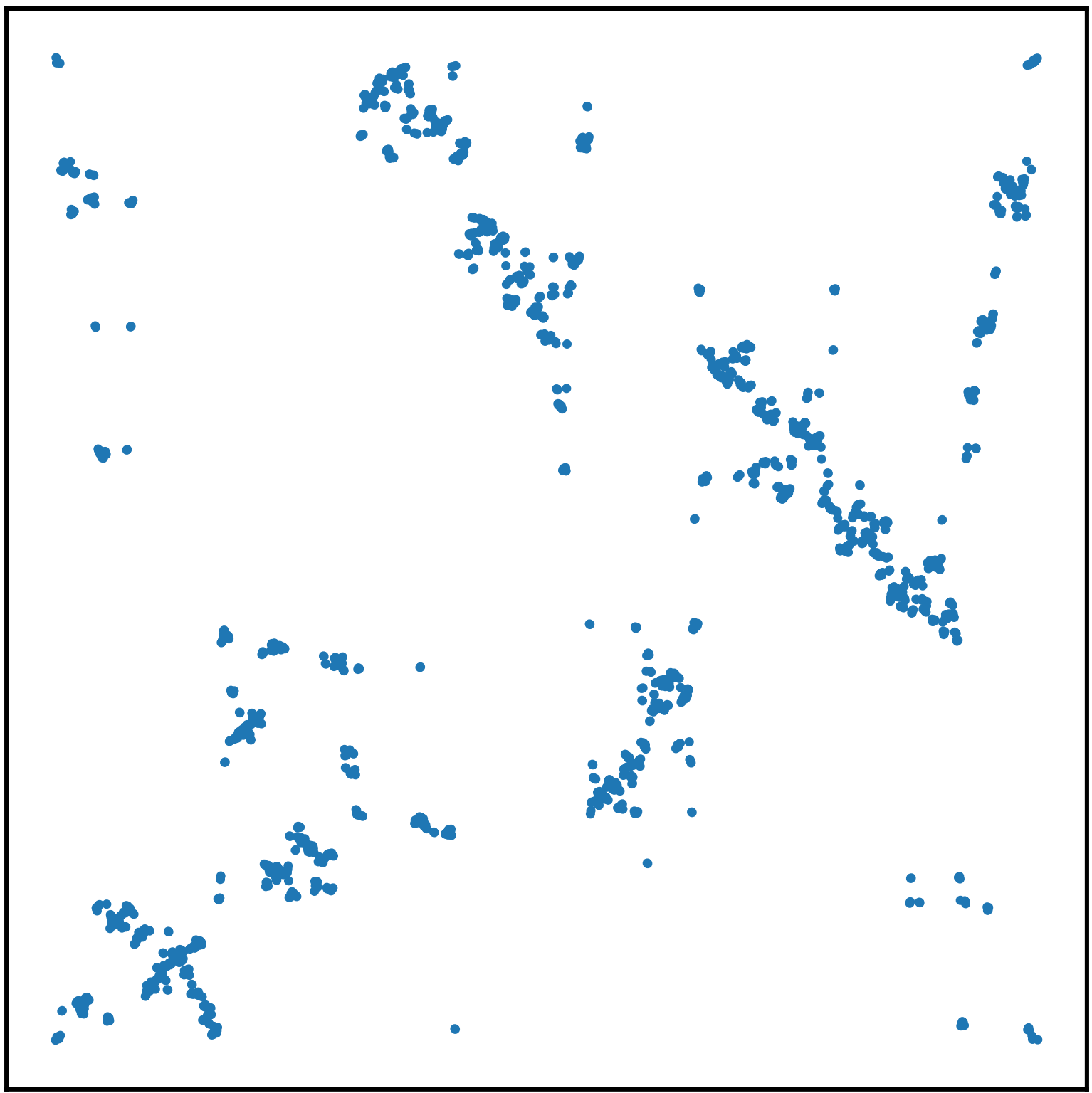} 
		\caption{\label{sim-menders-permuton}\textbf{Left:} Two large uniform meanders $\ell$ of size 256 and 2048. \textbf{Right:} The plots of the two corresponding meandric permutations $\sigma_{\ell}$; the blue dots correspond to the values $(i,\sigma_\ell(i))_{i = 1,\dots,|\sigma_\ell|}$. These simulations are obtained using the Markov chain Monte Carlo algorithm from \cite{heitsch2011meander}.}
	\end{center}
	\vspace{-3ex}
\end{figure}

See Figure~\ref{fig-meander} for an example and the right-hand side of Figure~\ref{sim-menders-permuton} for the plots of two large uniform meandric permutations. Note that any meandric permutation fixes 1. It is easily verified that the mapping  $\ell\mapsto\sigma_{\ell}$ is a bijection from the set of meanders of size $n$ to the set of meandric permutations of size $2n$. The study of meandric permutations dates back to at least the paper~\cite{rosentiehl-meander}, where they are called ``planar permutations''. We note that meandric permutations are defined differently in various other literature; see Section~\ref{sec-cyclic-meander} just below for further explanations.

Conjecture~\ref{conj-meander} leads naturally to the following definition and conjecture.

\begin{defn} \label{def-meander-permuton}
The \textbf{meandric permuton} is the permuton $\perm$ defined as in~\eqref{eqn-permuton-def} in the case when $\gamma = \sqrt{\frac13 \left( 17 - \sqrt{145} \right)}$, $\kappa_1 = \kappa_2 = 8$, and $\eta_1$ and $\eta_2$ are independent, viewed as curves modulo time parametrization. 
\end{defn}

\begin{conj} \label{conj-meander-permuton}
For $n\in\BB N$, let $\sigma_n$ be a uniform sample from the set of meandric permutations of size $2n$ (i.e., the permutation associated with a uniform meander of size $n$).
Then the associated permuton $\perm_{\sigma_n}$ (Definition~\ref{def-permutation}) converges in law to the meandric permuton. 
\end{conj}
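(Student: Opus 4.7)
The plan is to reduce Conjecture~\ref{conj-meander-permuton} to the scaling-limit statement of Conjecture~\ref{conj-meander} and then extract the permuton convergence as a continuous functional of the decorated-map convergence. The key observation is that the meandric permutation $\sigma_{\ell_n}$ is precisely the discrete analogue of the continuum construction~\eqref{eqn-permuton-def}: it records the bijection between the two Hamiltonian path orderings $P_n^1$ and $P_n^2$ on the vertex set of $M_n$, which in the scaling limit should correspond to the reparametrization $\psi$ between the two space-filling SLE$_8$ curves, each parametrized by $\gamma$-LQG mass. Hence $\perm_{\sigma_n}$ is a deterministic functional of $(M_n, P_n^1, P_n^2)$ equipped with its uniform measure on vertices.

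Concretely, I would first formulate and prove an abstract continuity lemma: if a sequence of curve-decorated metric measure spaces $(X_n, d_n, \mu_n, \eta_n^1, \eta_n^2)$ with $\mu_n$-mass-parametrized space-filling curves converges in the Gromov-Hausdorff-Prokhorov-Uniform topology of~\cite{gwynne-miller-uihpq} to a limit $(X, d, \mu, \eta^1, \eta^2)$ in which each of $\eta^1, \eta^2$ visits $\mu$-a.e.\ point at a unique time, then the associated permutons (defined by the natural analogue of~\eqref{eqn-permuton-def}) converge weakly. Working in a Skorokhod-type coupling where the curves converge uniformly in time, one would argue that at Lebesgue-a.e.\ $t \in [0,1]$ the discrete reparametrizations $\psi_n(t)$ admit $\psi(t)$ as the only subsequential limit (using the uniqueness of the time at which $\eta^2$ hits $\eta^1(t)$), hence $\psi_n \to \psi$ in Lebesgue measure, which yields weak convergence of the pushforwards $(\operatorname{id}, \psi_n)_{*}\operatorname{Leb}$ to $(\operatorname{id}, \psi)_{*}\operatorname{Leb}$. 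Applying this lemma to $(M_n, P_n^1, P_n^2)$ with counting measure on vertices, together with Conjecture~\ref{conj-meander}, would yield Conjecture~\ref{conj-meander-permuton}.

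The main obstacle, by a wide margin, is Conjecture~\ref{conj-meander} itself, a deep open problem sitting in the mismatched regime $\kappa \notin \{\gamma^2, 16/\gamma^2\}$ of Remark~\ref{remark-mismatched}, where the usual mating-of-trees machinery does not directly apply and even the enumeration of meanders of size $n$ is only conjectural. A secondary difficulty, even assuming such a scaling limit in some topology, would be to upgrade it to a topology strong enough to identify the two curves' $\mu$-mass parametrizations simultaneously, since the reparametrization $\psi_n$ is sensitive to perturbations in the time parameters of both curves.

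As an alternative route that bypasses the full scaling-limit conjecture, one could try weak permuton convergence through the method of moments: by Theorem~\ref{thm:density}, the expected densities of all finite patterns in the meandric permuton are expressible as integrals of SLE$_8$ hitting probabilities against $\gamma$-LQG correlation functions, so it would suffice to show that $\mathbb{E}[\operatorname{dens}(\pi, \sigma_n)] \to \mathbb{E}[\operatorname{dens}(\pi, \perm)]$ for each fixed pattern $\pi$. Unfortunately, this reduces the problem to sharp asymptotic combinatorics of pattern occurrences in uniform meanders, which at present seems to be just as difficult as the original enumeration and scaling-limit problems.
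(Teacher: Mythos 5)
This statement is a \emph{conjecture}, not a theorem, and the paper offers no proof of it: the authors explicitly write that they ``do not know of a strategy for proving either Conjecture~\ref{conj-meander} or~\ref{conj-meander-permuton}.'' So there is no paper proof to compare against, and your proposal is correctly framed as a proposal rather than an argument.

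Your assessment of the situation is accurate and aligns with the paper. Two small refinements are worth noting. First, the paper does \emph{not} assert that Conjecture~\ref{conj-meander} formally implies Conjecture~\ref{conj-meander-permuton}; it only says the former ``leads naturally'' to the latter, and even speculates that the permuton version might be \emph{easier} (since permutons form a compact space, giving automatic tightness). Your proposed continuity lemma in the GHPU topology is a sensible way to try to make that implication precise, but the gap you flag is genuine: GHPU convergence of curve-decorated spaces controls each curve's time parametrization separately, while the permuton is defined through the reparametrization $\psi$ linking them, which is sensitive to errors in both parametrizations simultaneously and to multiple points of $\eta_2$. Making this step rigorous would require something like a uniqueness-of-time argument à la Lemma~\ref{lem-permuton-unique} \emph{together with} a statement that the discrete reparametrizations don't concentrate near the exceptional set, and that is not automatic from the topology alone. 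Second, your moments alternative via Theorem~\ref{thm:density} is conceptually attractive and is closer in spirit to how the paper suggests one might attack Conjecture~\ref{conj-meander-permuton} directly (see also Problem~\ref{prob-char}), but as you say it trades one open asymptotic enumeration problem for another. In short: your proposal is a reasonable sketch of two plausible strategies, with the main obstructions correctly identified, but neither the paper nor your proposal contains a proof, because none exists.
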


It is possible that Conjecture~\ref{conj-meander-permuton} is easier to prove than Conjecture~\ref{conj-meander}. This is because permutons are in some sense simpler objects than SLE-decorated LQG and because the set of permutons is compact with respect to the weak topology (so tightness is automatic). Nevertheless, we do not know of a strategy for proving either Conjecture~\ref{conj-meander} or~\ref{conj-meander-permuton} (see, however, Problem~\ref{prob-char}). 

{It is shown in~\cite{bg-permuton-msrble} that} $h$ and the SLE$_8$ curves $\eta_1$ and $\eta_2$, viewed modulo conformal and anticonformal automorphisms of $\BB C$, are a.s.\ given by an explicit measurable functions of the meandric permuton $\perm$ {(the paper~\cite{bg-permuton-msrble} also proves the analogous statement for skew Brownian permutons)}. Consequently, Conjecture~\ref{conj-meander-permuton} can be thought of as a form of convergence of meanders toward SLE-decorated LQG.

\subsubsection{Cyclic meandric permutations}
\label{sec-cyclic-meander}

In various other literature, e.g.,~\cite{lacroix-meander-survey}, the meandric permutation is instead defined as follows. Label the intersection points of the meander $\ell$ from left to right as $1,\dots,2n$. For $j=1,\dots,2n$, let $\tau_\ell(j)$ be the index of the intersection point hit by the loop (traversed counterclockwise) immediately after it hits $j$. Then $\tau_\ell$ is a cycle of length $2n$. To distinguish $\tau_\ell$ from the meandric permutation $\sigma_\ell$ of Definition~\ref{def-meandric-perm}, we call $\tau_\ell$ the \textbf{cyclic meandric permutation}.

If we express in one-line notation the inverse  $\sigma_\ell^{-1}$ of our meandric permutation $\sigma_\ell$ , then the same expression for  $\sigma_\ell^{-1}$, interpreted as a permutation written in cyclic notation, is equal to $\tau_\ell$. The permutations $\tau_\ell$ and $\sigma_\ell^{-1}$ are related by 
\eqb \label{eqn-permutation-relation}
\sigma^{-1}_\ell(j) = \tau_\ell^{j-1}(1) \quad \text{and}\quad \tau_\ell(j) = \sigma_\ell^{-1}(\sigma_\ell(j)+1) , \quad\forall j =1,\dots,2n ,
\eqe 
where $2n+1$ should be interpreted as $1$ here. One reason why $\sigma_\ell$ (or $\sigma_\ell^{-1}$) is more natural than $\tau_\ell$ from our perspective is the following conjecture. 

\begin{conj} \label{conj-cyclic-meander}
Let $\ell_n$ be a uniform meander of size $n$ and let $\tau_{\ell_n}$ be its associated cyclic meandric permutation. The permuton associated with $\tau_{\ell_n}$ converges in law to the identity permuton (i.e., the uniform measure on the diagonal in $[0,1]^2$) as $n\rta\infty$. 
\end{conj}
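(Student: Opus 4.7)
The plan is to deduce Conjecture~\ref{conj-cyclic-meander} from (a strengthening of) Conjecture~\ref{conj-meander-permuton} by combining the algebraic relation~\eqref{eqn-permutation-relation} with the $L^1$-continuity of translations applied to the conjectural limit function $\psi$ of Definition~\ref{def-meander-permuton}.

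First I would rewrite the plot of $\tau_{\ell_n}$ as a ``shift'' of the plot of $\sigma_{\ell_n}$. Reparametrizing by $k = \sigma_{\ell_n}^{-1}(j)$ in the identity $\tau_{\ell_n}(j) = \sigma_{\ell_n}(\sigma_{\ell_n}^{-1}(j)+1)$ gives
\eqbn
\left\{\left(\tfrac{j}{2n},\tfrac{\tau_{\ell_n}(j)}{2n}\right) : 1 \le j \le 2n\right\} = \left\{\left(\tfrac{\sigma_{\ell_n}(k)}{2n},\tfrac{\sigma_{\ell_n}(k+1)}{2n}\right) : 1 \le k \le 2n\right\},
\eqen
with $k$ taken cyclically mod $2n$. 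Introducing the periodic step function $\psi_n(t) := \sigma_{\ell_n}(\lceil 2nt\rceil)/(2n)$, a direct calculation shows that for each $f \in C^1([0,1]^2)$,
\eqbn
\int f\,d\perm_{\tau_{\ell_n}} = \int_0^1 f\bigl(\psi_n(t),\psi_n(t+1/(2n))\bigr)\,dt + O\bigl(n^{-1}\|\nabla f\|_\infty\bigr).
\eqen
Conditional on Conjecture~\ref{conj-meander-permuton} (in a sufficiently strong mode), one expects $\psi_n \to \psi$ where $\psi$ is as in Definition~\ref{def-meander-permuton}; note that $\psi$ is a.s.\ Lebesgue-measure-preserving because the meandric permuton has uniform marginals.

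The deterministic heart of the argument is the classical $L^1$-continuity of translations: for any measurable $\psi : [0,1] \to [0,1]$ extended periodically, $\|\psi(\cdot+\epsilon)-\psi(\cdot)\|_{L^1([0,1])} \to 0$ as $\epsilon \to 0$, and since $|\psi| \le 1$ this passes to $\mathbb{E}\|\psi(\cdot+\epsilon)-\psi(\cdot)\|_{L^1} \to 0$ for random $\psi$ by bounded convergence. Combining with Markov's inequality and uniform continuity of $f$ gives
\eqbn
\left|\int_0^1 f(\psi(t),\psi(t+\epsilon))\,dt - \int_0^1 f(\psi(t),\psi(t))\,dt\right| \le \omega_f(\delta) + 2\|f\|_\infty \cdot \delta^{-1}\|\psi(\cdot+\epsilon)-\psi\|_{L^1},
\eqen
which tends to $0$ upon sending $\epsilon \to 0$ and then $\delta \to 0$. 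The measure-preservation of $\psi$ collapses the limit to $\int_0^1 f(\psi(t),\psi(t))\,dt = \int_0^1 f(x,x)\,dx$, which is precisely $\int f$ against the identity permuton.

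The main obstacle will be justifying the functional convergence $\psi_n \to \psi$: permuton convergence (weak convergence of measures on $[0,1]^2$) is strictly weaker than any functional convergence that would allow the $L^1$-continuity argument to transfer from $\psi$ to $\psi_n$ — for example, for uniformly random $\sigma_n$ the permutons $\perm_{\sigma_n}$ converge to the uniform permuton while the $\psi_n$ have no functional limit whatsoever. The missing ingredient is a tightness/regularity estimate of the form
\eqbn
\mathbb{E}\left[\frac{1}{(2n)^2}\sum_{k=1}^{2n}\bigl|\sigma_{\ell_n}(k+1)-\sigma_{\ell_n}(k)\bigr|\right] \xrightarrow{n\to\infty} 0,
\eqen
which via the identity $\int_0^1 |\psi_n(t+1/(2n))-\psi_n(t)|\,dt = \tfrac{1}{(2n)^2}\sum_k|\sigma_{\ell_n}(k+1)-\sigma_{\ell_n}(k)|$ says precisely that the expected $\eta_1$-time gap between two $\eta_2$-adjacent intersection points, normalized by $n$, vanishes. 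Establishing this regularity estimate — either by a combinatorial analysis of uniform meanders (via the correspondence with pairs of non-crossing arc systems forming a single cycle), or by upgrading Conjecture~\ref{conj-meander-permuton} to a mating-of-trees-style scaling limit that automatically controls discrete regularity — would close the argument.
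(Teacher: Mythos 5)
Your proposal is a valid alternative heuristic and it is structurally parallel to the paper's, but the two arguments differ in the continuum-level lemma that drives the reduction. The paper (Section~\ref{sec-cyclic-conj}) establishes an almost-everywhere \emph{pointwise} continuity statement for $\psi$: for Lebesgue-a.e.\ $t$, $\sup_{|s|\le\eps}|\psi(\phi(t)+s)-t|\to 0$ as $\eps\to 0$, and the proof uses the topology of the two space-filling SLE$_8$ curves (that $\eta_2([t-\delta,t+\delta])$ contains a neighborhood of $\eta_2(t)$ whenever $\eta_2(t)$ is a singly-visited point, and that $\eta_1$ is continuous and hits $\eta_2(t)$ exactly once). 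Your argument instead invokes the classical $L^1$-continuity of translations, which is a standard analysis fact valid for \emph{any} bounded measurable $\psi$ on the torus, plus measure-preservation of $\psi$ to collapse the limit to the diagonal. This is a weaker hypothesis on $\psi$ and a more elementary lemma, but it is entirely sufficient for the conclusion, and it has the virtue of making no use of the special structure of the meandric permuton. In exchange the paper's pointwise lemma gives a uniform-in-$s$ statement that pairs more naturally with Conjecture~\ref{conj-meander} (a GHPU-type scaling limit, which controls curves uniformly rather than in an $L^1$ averaged sense).

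You are also right to flag the transfer from the continuum $\psi$ to the discrete $\psi_n$ as the genuine gap, and your explicit tightness reformulation
\eqbn
\BB E\left[\frac{1}{(2n)^2}\sum_{k=1}^{2n}\bigl|\sigma_{\ell_n}(k+1)-\sigma_{\ell_n}(k)\bigr|\right]\xrightarrow{n\to\infty} 0
\eqen
is exactly equivalent (by the change of variables $j=\sigma_{\ell_n}(k)$) to the paper's stated target that $|\tau_{\ell_n}(j)-j|=o(n)$ for all but a negligible fraction of $j$, and cleanly reduces Conjecture~\ref{conj-cyclic-meander} to a concrete combinatorial question about uniform meanders. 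The paper's heuristic invokes the stronger Conjecture~\ref{conj-meander} (Gromov-Hausdorff-type convergence of the curve-decorated map) rather than the permuton Conjecture~\ref{conj-meander-permuton} precisely because permuton convergence alone cannot control such regularity; this is the same point you make with your uniform-random-permutation counterexample. So the two heuristics agree on the structure (reduce to a discrete regularity estimate, justify it in the continuum, note the transfer requires a stronger mode of convergence), but yours uses a simpler and more general continuum argument and names the discrete estimate explicitly, which is genuinely useful.

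Two small technical points worth tightening if you write this up: (1) in the display involving $\omega_f(\delta)$ you should state that $\omega_f$ is the modulus of continuity of $f$ on $[0,1]^2$ and that the Markov split is on the event $\{|\psi(t+\eps)-\psi(t)|>\delta\}$; (2) the approximation $\int f\,d\perm_{\tau_{\ell_n}}=\int_0^1 f(\psi_n(t),\psi_n(t+1/(2n)))\,dt+O(n^{-1}\|\nabla f\|_\infty)$ needs a word about the wrap-around at $t=1$ (the paper handles this by treating indices mod $2n$), though this contributes only $O(1/n)$.
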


See Section~\ref{sec-cyclic-conj} for a heuristic justification of Conjecture~\ref{conj-cyclic-meander} using Conjecture~\ref{conj-meander}.

\subsection{Skew Brownian permutons} 
\label{sec-skew-permuton}

The skew Brownian permutons are a two-parameter family of random permutons, indexed by $(\rho,q) \in (-1,1) \times (0,1)$, which were first introduced in~\cite{borga-skew-permuton}. The original construction of the skew Brownian permuton involves a pair of correlated Brownian excursions with correlation $\rho $. The parameter $q$ controls, roughly speaking, the tendency of the permuton to be increasing. This can be made precise by looking at the probability that a pair of points $(X_1,Y_1) , (X_2,Y_2)$ sampled from the permuton satisfy $X_1 < X_2$ and $Y_1 > Y_2$~\cite[Proposition 1.14]{bhsy-baxter-permuton}. The case $q=1/2$ is symmetric, in the sense that this probability is $1/2$. We will not need the Brownian motion construction of the skew Brownian permuton in this paper.

It is shown in~\cite[Theorem 1.17]{borga-skew-permuton} that for each choice of parameters $(\rho,q) \in (-1,1) \times (0,1)$ for the skew Brownian permuton, there exists $\gamma \in (0,2)$ and a coupling of two whole-plane space-filling SLE$_{\kappa = 16/\gamma^2}$ curves from $\infty$ to $\infty$ such that the permuton~\eqref{eqn-permuton-def} coincides with the skew Brownian permuton. See Theorem~\ref{thm-skew-permuton} below for a precise statement. This representation has previously been used to prove various properties of the skew Brownian permuton in~\cite{bhsy-baxter-permuton}. 
 
One of the main reasons to study skew Brownian permutons is that they describe the scaling limits of various types of pattern-avoiding random permutations, for example the following. 

\begin{defn} \label{def-baxter}
Let $\sigma$ be a permutation of size $n$.
\begin{itemize}
\item $\sigma$ is \textbf{semi-Baxter} if there does not exist $1 \leq i < j < k \leq n$ such that $\sigma(j+1) < \sigma(i) < \sigma(k) < \sigma(j)$. 
\item $\sigma$ is \textbf{Baxter} if $\sigma$ is semi-Baxter and additionally there does not exist $1 \leq i < j < k \leq n$ such that $\sigma(j ) < \sigma(k) < \sigma(i) < \sigma(j+1)$.
\item $\sigma$ is \textbf{strong-Baxter} if $\sigma$ is Baxter and additionally there does not exist $1\leq i < j < k \leq n$ such that $\sigma(j+1) < \sigma(k) < \sigma(i) < \sigma(j)$. 
\end{itemize}
\end{defn}

\noindent We have the following convergence results toward skew Brownian permutons.

\begin{itemize}
\item The permutons associated with uniform semi-Baxter permutations converge in law to the skew Brownian permuton with $\rho = -(1+\sqrt 5)/4$ and $q=1/2$~\cite{borga-strong-baxter}. 
\item The permutons associated with uniform Baxter permutations converge in law to the skew Brownian permuton with $\rho = -1/2$ and $q=1/2$~\cite{bm-baxter-permutation} (this special case of the skew Brownian permuton is sometimes called the \textbf{Baxter permuton}). 
\item The permutons associated with uniform strong-Baxter permutations converge in law to the skew Brownian permuton with parameters $\rho \approx  - 0.2151$ and $q\approx 0.3008$~\cite{borga-strong-baxter}.   
\end{itemize}

\begin{remark} \label{remark-separable}
The boundary case $\rho=1$, $q\in (0,1)$ for the skew Brownian permuton corresponds to the Brownian separable permuton~\cite{bassino-separable-permuton} and biased versions thereof \cite{Bassino-universal-perm,Borga-decorated-perm}, as proved in~\cite[Theorem 1.12]{borga-skew-permuton}. We will not consider the case $\rho=1$ in the present paper since the representation in terms of SLE and LQG from~\cite[Theorem 1.17]{borga-skew-permuton} does not work the same way in this case. However, some of the results in this paper have already been proven for the Brownian separable permuton by other methods. It is conjectured in \cite[Section 1.6, Item 7]{borga-skew-permuton} that the Brownian separable permuton can be described in terms of SLE and LQG using the critical mating of trees results from~\cite{ahps-critical-mating}.
\end{remark}

\subsection{Main results} 
\label{sec-results}

The main results of this paper and their proofs (given in Sections~\ref{sec-lis-proof} through~\ref{sec-re-root-proof}) can be read independently from each other.

\subsubsection{Longest increasing subsequence is sublinear} 
\label{sec-lis}

Assume that we are in the setting of Section~\ref{sec-permuton-def}. We allow $\gamma \in (0,2)$ and $\kappa_1,\kappa_2 > 4$ to be arbitrary, but we require that the joint distribution of the SLE curves $(\eta_1,\eta_2)$, viewed as curves modulo time parametrization, satisfies one of the following two conditions.
\begin{itemize}
\item $\eta_1$ and $\eta_2$ are independent viewed modulo time parametrization (as is the case in the definition of the meandric permuton); or
\item $\kappa_1=\kappa_2 $ and $(\eta_1,\eta_2)$ are coupled together as in the SLE / LQG description of the skew Brownian permuton\footnote{We do not require that $\kappa_i=16/\gamma^2$ but only that $\kappa_1=\kappa_2$.} for some choice of $(\rho,q) \in (-1,1)\times (0,1)$ (Theorem~\ref{thm-skew-permuton}). 
\end{itemize}
Let $\perm$ be the associated permuton as in~\eqref{eqn-permuton-def}. {Note that the possibilities for $\perm$ include both the meandric permuton and the skew Brownian permutons.} In this subsection we will consider the longest increasing subsequence for random permutations which converge to $\perm$.

\begin{defn} \label{def-lis}
For a permutation $\sigma$, we define the \textbf{length of the longest increasing (resp.\ decreasing) subsequence} $\op{LIS}(\sigma)$ (resp.\ $\op{LDS}(\sigma)$) to be the maximal cardinality of a set $L\subset [1,|\sigma|] \cap \BB Z$ such that the restriction of $\sigma$ to $L$ is monotone increasing (resp.\ decreasing). 
\end{defn} 

There is a huge literature devoted to the asymptotic behavior of $\op{LIS}(\sigma_n)$ and $\op{LDS}(\sigma_n)$ for various types of large random permutations $\sigma_n$. 
For uniform random permutations $\sigma_n$, the question of investigating $\op{LIS}(\sigma_n)$ was raised in the 1960s by Ulam~\cite{Ulam-lis}. In this case, one has $\op{LIS}(\sigma_n) \sim 2\sqrt n$~\cite{Hammersley-lis,Vervsik-lis,Logan-lis}. 
The strongest known result is due to Dauvergne and Vir\'{a}g~\cite{Dauvergne-lis}, who showed that the scaling limit of the longest increasing subsequence in a uniform permutation is the \emph{directed geodesic} of the \emph{directed landscape}. 
The study of $\op{LIS}(\sigma_n)$ is connected with many other problems in combinatorics and probability theory, such as last passage percolation and random matrix theory; we refer the reader to the book of  Romik~\cite{Romik-lis} for an overview. 

In recent years, many extensions beyond uniform permutations have been considered:
\begin{itemize}
	\item when $\sigma_n$ is a uniform random pattern-avoiding permutation~\cite{Deutsch-lis,Madras-lis, Mansour-lis,bassino-lis};
	\item when $\sigma_n$ is conjugacy-invariant with few cycles, in particular \emph{Ewens-distributed}~\cite{Kammoun-lis};
	\item when $\sigma_n$ follows the \emph{Mallows distribution}, or is a product of such random permutations (\cite{Ke-lis-mallows} and references therein).
\end{itemize}
For the first two items the results are far from being complete, and our next result is a new contribution towards the understanding of the behavior of $\op{LIS}(\sigma_n)$ for random permutations $\sigma_n$ which are not chosen uniformly from all possibilities.  

\begin{thm} \label{thm-lis}
Assume that $(\eta_1,\eta_2)$ satisfy the hypotheses at the beginning of this subsection and let $\perm$ be as in~\eqref{eqn-permuton-def}. 
Let $\{\sigma_n\}_{n\in\BB N}$ be a sequence of random permutations of size $|\sigma_n| \rta\infty$ whose associated permutons $\perm_{\sigma_n}$ converge in law to $\perm$ with respect to the weak topology.  
Then the longest increasing subsequence of $\sigma_n$ is sublinear in $|\sigma_n|$, i.e., $\op{LIS}(\sigma_n) / |\sigma_n| \rta 0$ in probability.  The same is true for the longest decreasing subsequence. 
\end{thm}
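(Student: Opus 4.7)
My plan is to reduce the theorem to a property of the limiting permuton $\perm$ and then verify that property via the SLE/LQG representation.

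Define $\Lambda^+(\perm) := \sup\{\perm(A) : A \subset [0,1]^2 \text{ is closed and totally ordered in the coordinatewise partial order}\}$. A compactness argument gives $\limsup_{n\to\infty} \op{LIS}(\sigma_n)/|\sigma_n| \leq \Lambda^+(\perm)$ in probability: an increasing subsequence of length $m$ in $\sigma_n$ corresponds to a non-decreasing ``staircase'' $S_n \subset [0,1]^2$ made of $m$ cells of $\perm_{\sigma_n}$ with $\perm_{\sigma_n}(S_n) = m/|\sigma_n|$. By compactness of closed subsets of $[0,1]^2$ in the Hausdorff topology (combined with Skorokhod representation for $\perm_{\sigma_n} \Rta \perm$ and the Portmanteau theorem applied to the closed $\delta$-neighborhoods of the Hausdorff limit), a subsequential limit $S$ is a closed totally ordered set with $\perm(S) \geq \limsup_n \op{LIS}(\sigma_n)/|\sigma_n|$. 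The decreasing case is symmetric (e.g.\ by replacing $\eta_2$ with a suitable time-reversed analogue, which reduces it to the increasing case). So it suffices to show $\Lambda^+(\perm) = 0$ almost surely.

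To this end I would derive a contradiction from the assumption that $\Lambda^+(\perm) \geq \epsilon$ with probability $\geq \delta > 0$, by showing that the expected density of the increasing pattern $\op{id}_k$ in $\perm$ decays faster than any exponential in $k$. The point is that if $\perm(A) \geq \epsilon$ for some totally ordered $A$, then $\perm^{\otimes k}(A^k) \geq \epsilon^k$ and every $k$-tuple in $A^k$ induces the pattern $\op{id}_k$, so $\op{occ}(\op{id}_k,\perm) \geq \epsilon^k$; taking expectations gives $\BB E[\op{occ}(\op{id}_k,\perm)] \geq \delta\epsilon^k$ for all $k$. On the other hand, by exchangeability of $k$ iid samples from $\perm$ (which correspond to $k$ iid $\mu_h$-samples $z_1,\dots,z_k$ in $\BB C$),
\[
\BB E[\op{occ}(\op{id}_k,\perm)] = \BB P[\text{$\eta_1$ and $\eta_2$ visit $z_1,\dots,z_k$ in the same order}].
\]
In the independent case, conditionally on $h$ and the sample points, the $\eta_1$- and $\eta_2$-induced orderings of $[1,k] \cap \BB Z$ are independent uniform random permutations, so this probability equals $1/k!$, giving the required super-exponential decay and contradicting the $\delta\epsilon^k$ lower bound for large $k$. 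In the skew Brownian coupling case, I would use the SDE / correlated Brownian excursion description of the skew Brownian permuton from \cite{borga-skew-permuton,bhsy-baxter-permuton} to show that, given the $\eta_1$-ordering, the $\eta_2$-ordering retains enough conditional randomness that the probability of agreement decays at rate $C^k/k!$.

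The hard part will be the skew Brownian case in the last step: because $\eta_1$ and $\eta_2$ are non-trivially coupled, bounding the conditional probability that $\eta_2$ orders $k$ random $\mu_h$-samples in the same way as $\eta_1$ requires genuine input from the construction. A natural strategy is a recursive / inductive argument: after conditioning on an $\op{id}_{k-1}$-chain of random samples, use the Markov property of the driving correlated Brownian excursions together with the SDE description of the matching function to show that a fresh iid sample lands in the unique position (out of $k$) compatible with extending the chain with probability $O(1/k)$ on average, yielding a $C^k/k!$ bound for the expected density and thereby completing the contradiction.
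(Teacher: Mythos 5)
Your reduction to showing that the permuton assigns zero mass to every closed totally ordered (``monotone'') subset of $[0,1]^2$ is essentially the same as Proposition~\ref{prop-monotone-set} in the paper, and your compactness/Hausdorff-limit argument for that reduction matches the paper's proof closely. The decreasing case via time reversal is also the same device.

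However, the verification step has a genuine error. You claim that in the independent case, conditionally on $h$ and the sample points $z_1,\dots,z_k$, the $\eta_1$- and $\eta_2$-induced orderings are \emph{independent uniform} random permutations, so the probability they agree is exactly $1/k!$. The conditional independence is correct (since $\eta_1\perp\eta_2$ are both independent of $(h,z_1,\dots,z_k)$), but the conditional \emph{uniformity} is false. Given the positions $z_1,\dots,z_k$, the distribution of the $\eta_1$-order depends on the geometry of the points: for example, if $z_1$ and $z_2$ are very close together relative to $z_3$, then $\eta_1$ is very likely to visit $z_1,z_2$ consecutively, so the order is far from uniform. What is true is that the $\eta_1$-order is uniform conditionally on $(h,\eta_1)$ and then integrating over the random points; but then the two orders are no longer independent, because they share the randomness of the points. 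Writing $p_\tau := \BB P[\pi_1=\tau\mid h,\vec z]$, the correct identity (in the case $\kappa_1=\kappa_2$) is $\BB P[\pi_1=\pi_2]=\BB E\bigl[\sum_\tau p_\tau^2\bigr]$, and Cauchy--Schwarz gives $\sum_\tau p_\tau^2 \geq 1/k!$ with equality only when the conditional law is uniform, which it is not. So your claimed equality is actually a strict inequality in the wrong direction, and the slack could destroy the super-exponential decay that the argument requires. Note that the failure already occurs in the independent case, before you even get to the coupled (skew Brownian) case which you flag as the hard part.

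The paper avoids pattern-density estimates entirely. Instead, it shows (Lemma~\ref{lem-sphere-limsup} and Proposition~\ref{prop-sle-ordered}) that for $\mu_h$-a.e.~$z$, the set $\mcl Q_z$ of points hit by $\eta_1$ before $z$ and by $\eta_2$ after $z$ has positive $\mu_h$-density at $z$ along a sequence of scales. This is proved by a scale-invariance argument for the corresponding infinite-volume field (Lemma~\ref{lem-gff-limsup}), and then combined with the Lebesgue density theorem for Radon measures to conclude that any set hit in the same order by $\eta_1$ and $\eta_2$ (equivalently, any monotone subset of $[0,1]^2$) has zero mass. This local/geometric argument handles the independent and coupled cases simultaneously and sidesteps the quantitative control over $\BB E[\pocc(\op{id}_k,\perm)]$ that your approach would require, which I do not see how to establish.
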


Theorem~\ref{thm-lis} will be a consequence of a general condition on a permuton which implies that any permutations converging to it have sublinear longest increasing subsequences (Proposition~\ref{prop-monotone-set}). We will verify this condition in our setting in Section~\ref{sec-lis-proof}, via a surprisingly simple (4-page) argument using SLE and LQG.

The analog of Theorem~\ref{thm-lis} for the biased Brownian separable permuton (which corrresponds to the skew Brownian permuton in the limiting case $\rho = 1$) was proven in~\cite[Theorem 1.10]{bassino-lis}, using very different arguments from the ones in this paper. 
As an immediate corollary of Theorem~\ref{thm-lis} and the results of~\cite{borga-skew-permuton} (see Theorem~\ref{thm-skew-permuton}), we obtain the following. 

\begin{cor} \label{cor-skew-lis}
Let $\perm$ be the skew Brownian permuton with parameters $\rho \in (-1,1)$ and $q\in (0,1)$. 
Let $\{\sigma_n\}_{n\in\BB N}$ be a sequence of random permutations of size $|\sigma_n| \rta\infty$ whose associated permutons $\perm_{\sigma_n}$ converge in law to $\perm$ with respect to the weak topology.  
Then $\op{LIS}(\sigma_n) / |\sigma_n| \rta 0$ and $\op{LDS}(\sigma_n)/|\sigma_n| \rta 0$ in probability.  
\end{cor}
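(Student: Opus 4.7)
The plan is to derive the corollary as a direct consequence of Theorem~\ref{thm-lis} combined with the SLE/LQG representation of the skew Brownian permuton established in~\cite{borga-skew-permuton}. Concretely, I would first invoke Theorem~\ref{thm-skew-permuton} (that is, \cite[Theorem 1.17]{borga-skew-permuton}), which asserts that for every pair of parameters $(\rho,q)\in(-1,1)\times(0,1)$ there exist $\gamma\in(0,2)$ and a coupling of two whole-plane space-filling SLE$_{16/\gamma^2}$ curves $(\eta_1,\eta_2)$ on a singly marked unit area $\gamma$-LQG sphere such that the permuton $\perm_{\eta_1,\eta_2}$ defined via~\eqref{eqn-permuton-def} is equal in distribution to the skew Brownian permuton with parameters $(\rho,q)$.

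Next, I would observe that in this representation we automatically have $\kappa_1=\kappa_2=16/\gamma^2$ and the joint law of $(\eta_1,\eta_2)$ modulo time parametrization is precisely the one required by the second bullet point in the hypotheses at the beginning of Section~\ref{sec-lis} on $(\eta_1,\eta_2)$. In particular, although the meandric-permuton case of Theorem~\ref{thm-lis} relied on independence of the two SLE curves, the skew-Brownian case is covered by the second (dependent) alternative, so no additional input is needed beyond the identification provided by~\cite{borga-skew-permuton}.

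With the skew Brownian permuton $\perm$ now realized as one of the permutons to which Theorem~\ref{thm-lis} applies, the conclusion $\op{LIS}(\sigma_n)/|\sigma_n|\to 0$ in probability for any sequence of random permutations $\sigma_n$ with $\perm_{\sigma_n}\to\perm$ in law follows immediately. The corresponding statement for $\op{LDS}(\sigma_n)/|\sigma_n|\to 0$ is explicitly included in the conclusion of Theorem~\ref{thm-lis}, so no separate argument is required. I do not anticipate any serious obstacle: the only nontrivial content is the SLE/LQG description of the skew Brownian permuton, which is an already established result that I am treating as a black box.
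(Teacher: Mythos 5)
Your proposal is correct and follows precisely the argument the paper intends: the paper states that Corollary~\ref{cor-skew-lis} is an immediate consequence of Theorem~\ref{thm-lis} together with the SLE/LQG representation of the skew Brownian permuton from Theorem~\ref{thm-skew-permuton}, which is exactly how you proceed. Your observation that the coupling falls under the second (dependent, $\kappa_1=\kappa_2$) alternative in the hypotheses of Theorem~\ref{thm-lis} is the only point requiring care, and you handle it correctly.
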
 

We now record the implication of Corollary~\ref{cor-skew-lis} for some particular random permutations which are known to converge to the skew Brownian permuton (see Section~\ref{sec-skew-permuton}). 

\begin{cor} \label{cor-baxter-lis}
For $n\in\BB N$, let $\sigma_n$ be a uniform sample from the set of Baxter permutations of size $n$ (Definition~\ref{def-baxter}). 
Then $\op{LIS}(\sigma_n)/ n \rta 0$ and $\op{LDS}(\sigma_n)/n \rta 0$ in probability.
The same is true for strong-Baxter permutations and semi-Baxter permutations.
\end{cor}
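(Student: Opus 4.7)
The plan is to observe that Corollary~\ref{cor-baxter-lis} is essentially an immediate consequence of Corollary~\ref{cor-skew-lis} combined with the three convergence results recalled in Section~\ref{sec-skew-permuton}. In each case the only thing to check is that the parameters $(\rho,q)$ of the limiting skew Brownian permuton actually lie in the open set $(-1,1)\times(0,1)$ where Corollary~\ref{cor-skew-lis} applies; once this is verified, the result is a formal deduction.

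Concretely, I would proceed case by case. First, for uniform Baxter permutations of size $n$, the convergence result of Borga--Maazoun~\cite{bm-baxter-permutation} asserts that $\perm_{\sigma_n}$ converges in law to the skew Brownian permuton with parameters $(\rho,q) = (-1/2,1/2)$. Since $-1/2 \in (-1,1)$ and $1/2 \in (0,1)$, Corollary~\ref{cor-skew-lis} yields $\op{LIS}(\sigma_n)/n \to 0$ and $\op{LDS}(\sigma_n)/n \to 0$ in probability. Next, for uniform semi-Baxter permutations, the convergence result gives parameters $(\rho,q) = (-(1+\sqrt{5})/4,\,1/2)$; one checks numerically that $-(1+\sqrt{5})/4 \approx -0.809 \in (-1,1)$, so again Corollary~\ref{cor-skew-lis} applies. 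Finally, for uniform strong-Baxter permutations, the parameters are $(\rho,q) \approx (-0.2151,\,0.3008)$, both of which lie in the required ranges, so once more Corollary~\ref{cor-skew-lis} gives sublinearity of both $\op{LIS}$ and $\op{LDS}$.

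There is essentially no obstacle here: all the work has been done in Theorem~\ref{thm-lis} (and hence its specialization Corollary~\ref{cor-skew-lis}), and the three convergence statements are quoted from the literature. The only subtlety worth flagging explicitly is that Corollary~\ref{cor-skew-lis} is stated for $(\rho,q) \in (-1,1)\times(0,1)$ and excludes the boundary case $\rho = 1$ (corresponding to the Brownian separable permuton, cf.\ Remark~\ref{remark-separable}); none of the three families above produces $\rho = 1$, so this restriction is harmless. Thus the proof reduces to citing the three convergence theorems and invoking Corollary~\ref{cor-skew-lis}.
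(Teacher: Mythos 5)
Your proposal matches the paper's reasoning exactly: the paper presents Corollary~\ref{cor-baxter-lis} as the immediate specialization of Corollary~\ref{cor-skew-lis} to the three families of pattern-avoiding permutations, using the convergence results recalled in Section~\ref{sec-skew-permuton}. Your explicit check that the parameters $(\rho,q)$ fall in $(-1,1)\times(0,1)$ in each case is the only verification needed, and you carry it out correctly.
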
 

Note that not all models of uniform pattern-avoiding permutations have sublinear longest increasing subsequences; see, e.g.,~\cite{Deutsch-lis,Madras-lis, Mansour-lis} for examples where the longest increasing subsequence is known to be linear.

\begin{remark} \label{remark-bipolar}
The study of the length of the longest increasing subsequence in a Baxter permutation $\op{LIS}(\sigma_n)$ is also motivated by its connection with the length of the longest directed path $\op{LDP}(m_n)$ from the source to the sink of a uniform bipolar orientation $m_n$ with $n$ edges. Recall that a bipolar orientation is a planar map equipped with an acyclic orientation of the edges with exactly one source (i.e.\ a vertex with only outgoing edges) and one sink (i.e.\ a vertex with only incoming edges), both on the outer face. Uniform bipolar orientations have been studied, e.g., in \cite{kmsw-bipolar,ghs-bipolar,bm-baxter-permutation}. 
		
Building on a bijection between Baxter permutations and bipolar orientations, introduced by Bonichon, Bousquet-M\'{e}lou and Fusy~\cite{bbf-bipolar-bijection}, it is possible to show that $\op{LIS}(\sigma_n)$ and $\op{LDP}(m_n)$ have the same law. Therefore,  Corollary \ref{cor-baxter-lis} implies that $\op{LDP}(m_n)$ is sublinear.
		
The study of $\op{LDP}(m_n)$ can be interpreted as a model of last passage percolation in random geometry, which, to the best of our knowledge, has been not investigated previously.
\end{remark}

We do not know that uniform meandric permutations converge to the meandric permuton, but due to Theorem~\ref{thm-lis}, the following statement is implied by Conjecture~\ref{conj-meander-permuton}.

\begin{conj} \label{conj-meander-lis} 
Let $\sigma_n$ be a uniform meandric permutation of size $2n$. Then $\op{LIS}(\sigma_n)/n \rta 0$ and $\op{LDS}(\sigma_n)/n \rta 0$ in probability.
\end{conj}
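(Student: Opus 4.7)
The most direct line of attack proceeds conditionally. Conjecture \ref{conj-meander-permuton} asserts that the permutons $\perm_{\sigma_n}$ of uniform meandric permutations converge in law to the meandric permuton $\perm$ of Definition \ref{def-meander-permuton}. Since $\perm$ is built from two independent SLE$_8$ curves on an LQG sphere, it falls squarely within the scope of Theorem \ref{thm-lis} (the independent-SLEs hypothesis). The conjecture therefore follows immediately from Theorem \ref{thm-lis} once Conjecture \ref{conj-meander-permuton} is in hand. So the real task is either to prove the scaling limit, or to find a self-contained argument that bypasses it.

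A natural way to bypass the full scaling limit is to pass to subsequential limits. Because the space of permutons is weakly compact, $\{\perm_{\sigma_n}\}_{n\in\BB N}$ is tight automatically and every subsequential limit $\perm_\infty$ is a random permuton. If one can prove the ``soft'' structural statement that almost surely the closed support of every such $\perm_\infty$ contains no monotone arc of positive Lebesgue measure, then the general criterion underlying Theorem \ref{thm-lis} (Proposition \ref{prop-monotone-set}) yields $\op{LIS}(\sigma_n)/n \rta 0$ without one ever having to identify the limit. This weaker statement still requires genuine geometric input on a uniform meander, for instance an estimate that the fraction of intersection points of $\ell_n$ with $\BB R$ which are traversed by $\ell_n$ in the same order as along $\BB R$ is $o(n)$ with high probability.

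A second, purely combinatorial route would attempt, for each $\eps>0$, to bound the number of meanders of size $n$ whose meandric permutation contains an increasing (or decreasing) subsequence of length $\eps n$, and to show that this count is exponentially smaller than the total number of meanders; a union bound over the $\binom{2n}{\lceil \eps n\rceil}$ candidate index sets would then close the argument. The obstacle is that even the leading-order asymptotics of the total number of meanders of size $n$ are only conjectural \cite{dgg-meander-asymptotics, Jensen-num-meanders}, so this approach demands new enumerative tools that are not presently available. One might try to leverage the non-crossing structure underlying a meander (for instance, the two arch systems above and below $\BB R$) to encode a long monotone pattern as a rigid substructure and then apply a transfer-operator or generating-function estimate, but turning this into a rigorous exponential gap appears hard.

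In all three approaches the hard part is the same: extracting any rigorous, quantitative control on the large-scale geometry of a uniform meander. Until such control is developed --- either through a proof of Conjecture \ref{conj-meander-permuton}, through partial structural information on subsequential limits, or through sharp enumerative estimates --- Conjecture \ref{conj-meander-lis} appears to be genuinely open, despite its simple statement.
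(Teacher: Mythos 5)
Your assessment is correct and matches the paper's own presentation: the statement is labeled as a \emph{conjecture} precisely because the authors do not prove it, and the only argument they offer is the conditional one you identify in your first paragraph — Conjecture~\ref{conj-meander-permuton} plus Theorem~\ref{thm-lis} (via Proposition~\ref{prop-monotone-set}) would immediately imply it. Your further remarks about subsequential limits and enumerative bounds are reasonable speculation about how one might try to remove the conditioning, but the paper makes no such attempt, so there is nothing to compare beyond the shared observation that the result remains open pending a proof of the scaling-limit conjecture.
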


It is also of substantial interest to prove non-trivial \emph{lower} bound for $\op{LIS}(\sigma_n)$ and $\op{LDS}(\sigma_n)$, for appropriate families of random permutations $\{\sigma_n\}_{n\in\BB N}$ which converge in law to the permutons considered in this paper. A classical theorem of Erd\"os-Szekeres~\cite{erdos-szekeres} shows that for any permutation $\sigma$, one has $\op{LIS}(\sigma) \times \op{LDS}(\sigma) \geq |\sigma|$. Hence, either $\op{LIS}(\sigma)$ or $\op{LDS}(\sigma)$ (or both) must be at least $\lfloor \sqrt{|\sigma|} \rfloor$. 

We expect that for the random permutations considered in Corollary~\ref{cor-baxter-lis}, both $\op{LIS}(\sigma_n)$ and $\op{LDS}(\sigma_n)$ should be of order $n^\alpha$ for some $\alpha \in (1/2,1)$. By Remark~\ref{remark-bipolar}, similar statements should also hold for the length of the longest directed path from the the source to the sink of a uniform bipolar orientation. 
In the case of uniform meandric permutations, we have a more precise guess for the growth rate of $\op{LIS}(\sigma_n)$ and $\op{LDS}(\sigma_n)$ coming from numerical simulations.\footnote{The numerical simulations used for Conjecture \ref{conj-meander-lis-precise} were obtained through the implementation of the Markov chain Monte Carlo approach from \cite{heitsch2011meander}.} 
	
\begin{conj} \label{conj-meander-lis-precise} 
	Let $\sigma_n$ be a uniform meandric permutation of size $2n$. Then there exists $\alpha\approx 0.69$ such that $\op{LIS}(\sigma_n) = n^{\alpha + o(1)}$ and $\op{LDS}(\sigma_n) = n^{\alpha+o(1)}$ with probability tending to 1 as $n\to \infty$. 
\end{conj}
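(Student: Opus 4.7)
The plan is to pass through the conjectural scaling limit. Granting Conjecture~\ref{conj-meander-permuton}, the first step would be to show that $\log \op{LIS}(\sigma_n)/\log n$ converges in probability to a deterministic exponent $\alpha$ characterized by the meandric permuton. A long increasing subsequence in $\sigma_n$ corresponds, in the scaling limit, to an ordered collection of points $z_1, \ldots, z_k \in \BB C$ (sampled, roughly, from the LQG area measure $\mu_h$) which are visited by both independent space-filling SLE$_8$ curves $\eta_1, \eta_2$ in the same order. The exponent $\alpha$ should therefore be encoded by the asymptotic decay rate of
\[
P_k := \bbP\bigl[\text{$\eta_1$ and $\eta_2$ visit $k$ i.i.d.\ $\mu_h$-samples in the same order}\bigr].
\]
A first-moment estimate, using a variant of the pattern density formula of Theorem~\ref{thm:density} combined with a union bound $\binom{|\sigma_n|}{k} P_k$, would yield the upper bound on $\op{LIS}$, provided one can establish behavior of the form $P_k = n^{-(1-\alpha)k + o(k)}$ at the threshold $k = n^\alpha$.

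The matching lower bound should come from a second-moment or Borel-Cantelli argument, constructing a long ``almost-monotone'' collection by following a locally ordered path through the SLE/LQG surface; adapting the explicit constructions used for longest increasing paths in last-passage percolation may be fruitful here. After identifying $\alpha$ in the limit, one would transfer the exponent to the discrete model through a tightness argument, since $\op{LIS}$ is not in general continuous as a functional of the permuton (small perturbations in the weak topology can change it substantially), so uniform tail bounds on $\op{LIS}(\sigma_n)/|\sigma_n|^{\alpha+\eps}$ will be needed to avoid this pitfall.

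The main obstacle, by far, is the estimation of $P_k$: it requires understanding hitting probabilities for SLE$_8$ on a $\gamma$-LQG surface with $\gamma = \sqrt{(17-\sqrt{145})/3}$, precisely the mismatched regime $\kappa \notin \{\gamma^2, 16/\gamma^2\}$ (Remark~\ref{remark-mismatched}), where essentially no explicit computations are available and the standard mating-of-trees encoding does not apply. Identifying the precise numerical value $\alpha \approx 0.69$ would presumably require either a new exact SLE$_8$ result in this regime, a hidden integrable structure on meanders analogous to the one underlying the Di~Francesco-Golinelli-Guitter prediction $\alpha_{\mathrm{enum}} = (29+\sqrt{145})/12$, or a combinatorial bijection that reduces meandric $\op{LIS}$ to another enumerable quantity. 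A more modest but still meaningful intermediate target would be matching polynomial bounds $n^{1/2+\eps_1} \le \op{LIS}(\sigma_n) \le n^{1-\eps_2}$ for explicit $\eps_i > 0$, which together with Erd\H{o}s-Szekeres and the re-rooting symmetry of Theorem~\ref{thm-permuton-re-root} would already refine Conjecture~\ref{conj-meander-lis} to nontrivial polynomial growth of both $\op{LIS}$ and $\op{LDS}$.
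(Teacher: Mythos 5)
There is no proof of this statement in the paper: Conjecture~\ref{conj-meander-lis-precise} is explicitly a conjecture, supported only by Markov chain Monte Carlo simulations (see the footnote attached to it), and the surrounding text even states that the authors have no guess for the exact value of $\alpha$ and that the only rigorous lower bound available for $\op{LIS}(\sigma_n)$ remains the trivial $n^{1/2}$ from Erd\H{o}s--Szekeres. So your ``proof attempt'' cannot be compared against a paper proof. What the paper \emph{does} offer in lieu of a proof is a heuristic framing in Section~\ref{sec-open-problems}: Problem~\ref{prob-lis-sle} predicts $\op{LIS}(\sigma_n) \approx |\sigma_n|^{\Delta/d_\gamma}$, where $\Delta$ is the maximal Hausdorff dimension (with respect to the LQG metric $D_h$) of a subset of $\BB C$ hit in the same order by $\eta_1$ and $\eta_2$. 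Your framing in terms of joint hitting probabilities $P_k$ is a genuinely different angle from the paper's geometric, Hausdorff-dimension framing; the two are presumably linked through a multifractal-type exponent calculation, but neither route is carried out anywhere.

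Your sketch is a sensible research program, and you correctly identify the central obstruction (intractability of SLE$_8$/LQG hitting probabilities in the mismatched regime). That said, be aware of the following beyond what you flag: (a) the whole scheme is conditional on Conjecture~\ref{conj-meander-permuton}, itself open; (b) your writing $P_k = n^{-(1-\alpha)k+o(k)}$ at $k = n^\alpha$ is a slight abuse, since $P_k$ does not depend on $n$ --- the estimate you actually need is super-exponential decay of the form $\log P_k = -c\, k\log k\,(1+o(1))$ with the constant $c$ matched to $\alpha$ via Stirling, analogous to the $1/k!$ scaling that gives $\alpha=1/2$ for the uniform permuton; and (c) for the transfer from continuum to discrete, permuton convergence alone gives essentially no control (Proposition~\ref{prop-monotone-set} only gives the sublinear upper bound, as in Theorem~\ref{thm-lis}), so you would need a substantially stronger, quantitative form of convergence than Conjecture~\ref{conj-meander-permuton}. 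In short: there is nothing to correct in your reasoning, but it is a roadmap through several open problems (in particular Problems~\ref{prob-lis-lower}, \ref{prob-lis-sle}, and \ref{prob-sle-order}), not an argument.
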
	

We do not have a guess for the precise value of $\alpha$. 
In the settings of both Corollary~\ref{cor-baxter-lis} and~\ref{conj-meander-lis-precise}, we are not currently able to prove any lower bounds for $\op{LIS}(\sigma_n)$ and $\op{LDS}(\sigma_n)$ which are significantly better than the trivial bound $n^{1/2}$ which comes from Erd\"os-Szekeres. See Problems~\ref{prob-lis-lower} and~\ref{prob-lis-sle} for some relevant open problems.

\subsubsection{Dimension of the support is one} 
\label{sec-permuton-dim}

\begin{defn} \label{def-supp}
The \textbf{closed support} of a permuton $\perm$ is the set
\eqb \label{eqn-permuton-supp}
\op{supp} \perm :=  \left( \text{intersection of all closed sets $K\subset [0,1]^2$ with $\perm(K) = 1$} \right) .
\eqe
\end{defn}

For a general permuton, the Hausdorff dimension of its closed support can be any number in $[1,2]$. {In particular, there exist permutons constructed from pairs of space-filling curves for which the Hausdorff dimension of the support is arbitrarily close to two (see Remark~\ref{remark-permuton-rectangle}).} Our next theorem states that the closed supports of the permutons considered in this paper all have Hausdorff dimension one.

\begin{thm} \label{thm-permuton-dim}
Assume that we are in the setting of Section~\ref{sec-permuton-def} with an \emph{arbitrary} choice of $\gamma \in (0,2)$, $\kappa_1,\kappa_2 > 4$, and coupling $(\eta_1,\eta_2)$.  Let $\perm$ be as in~\eqref{eqn-permuton-def}. Let 
\eqb \label{eqn-full-intersect-set}
\mcl T = \mcl T(\eta_1,\eta_2) := \left\{(t,s) \in [0,1]^2 : \eta_1(t) = \eta_2(s)\right\} .
\eqe
Almost surely, both the closed support $\op{supp} \perm$ and the set $\mcl T$ have Hausdorff dimension one. 
\end{thm}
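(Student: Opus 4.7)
The plan is to prove the theorem by establishing the two bounds $\dim_H(\op{supp}(\perm)) \geq 1$ and $\dim_H(\mcl T) \leq 1$, which together with the inclusion $\op{supp}(\perm) \subseteq \mcl T$ give both conclusions. The inclusion holds because $\mcl T$ is closed, being the preimage of the diagonal in $\BB C^2$ under the continuous map $(t,s) \mapsto (\eta_1(t), \eta_2(s))$, while $\perm$ is concentrated on the graph $\{(t, \psi(t)) : t \in [0,1]\} \subseteq \mcl T$ by its definition as the push-forward of Lebesgue measure via $t \mapsto (t,\psi(t))$.

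For the lower bound, the first marginal of $\perm$ is Lebesgue measure on $[0,1]$, so the first-coordinate projection $\pi_1 : [0,1]^2 \to [0,1]$ sends $\op{supp}(\perm)$ onto all of $[0,1]$: the image $\pi_1(\op{supp}(\perm))$ is closed and must contain $\op{supp}((\pi_1)_* \perm) = [0,1]$. Since $\pi_1$ is $1$-Lipschitz and hence does not increase Hausdorff dimension, $\dim_H(\op{supp}(\perm)) \geq \dim_H([0,1]) = 1$, and the same argument applied to $\mcl T \supseteq \op{supp}(\perm)$ gives $\dim_H(\mcl T) \geq 1$.

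For the upper bound I would use a box-counting argument. For each $N \in \BB N$, partition $[0,1]$ into intervals $I_k = [(k-1)/N, k/N]$ and set $C^i_k := \eta_i(I_k)$; by the LQG-mass parametrization, $\mu_h(C^i_k) = 1/N$. Since $\mcl T \cap (I_k \times I_j) \neq \emptyset$ iff $C^1_k \cap C^2_j \neq \emptyset$, the number $X_N := \#\{(k,j) : C^1_k \cap C^2_j \neq \emptyset\}$ counts the $(1/N)$-boxes that intersect $\mcl T$, giving the upper Minkowski (hence Hausdorff) dimension bound $\dim_H(\mcl T) \leq \limsup_{N\to\infty} \log X_N / \log N$. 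The key step is to show $X_N = N^{1+o(1)}$ with probability tending to $1$. Fix $\zeta > 0$ and use two SLE/LQG inputs, each depending only on the marginal laws of $\eta_i$ and $h$ (so applicable to any coupling of $(\eta_1,\eta_2)$): (i) a uniform Euclidean diameter bound $\op{diam}(C^i_k) \leq N^{-1/(Q\gamma) + \zeta}$ with $Q = 2/\gamma + \gamma/2$, from moment estimates on space-filling SLE--LQG cells; and (ii) a H\"older-type volume bound $\mu_h(B(z,r)) \leq r^{Q\gamma - \zeta}$ uniformly in $z \in \BB C$ and $r \in (0,1]$. By (i), every $\eta_2$-cell intersecting $C^1_k$ lies inside a Euclidean ball of radius $3 N^{-1/(Q\gamma) + \zeta}$ around any point of $C^1_k$; by (ii), that ball has $\mu_h$-mass at most $N^{-1 + O(\zeta)}$, and hence contains at most $N^{O(\zeta)}$ of the $\mu_h$-mass-$(1/N)$ cells $C^2_j$. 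Summing over the $N$ values of $k$ gives $X_N \leq N^{1 + O(\zeta)}$; letting $\zeta \to 0$ completes the argument.

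The main obstacle is estimate (i) in the mismatched case $\kappa_i \notin \{\gamma^2, 16/\gamma^2\}$, where mating-of-trees does not directly apply. This can be addressed by coupling each $\eta_i$ individually with an auxiliary LQG surface of matched parameters --- for which cell-size moment bounds are known --- and transferring the estimates to our field $h$ via a Radon-Nikodym comparison on large-probability events. The structural fact that forces the dimension to be exactly $1$ rather than larger is the sharp cancellation $\tfrac{1}{Q\gamma} \cdot Q\gamma = 1$ between the cell-diameter exponent and the LQG volume-growth exponent.
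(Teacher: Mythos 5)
Your lower bound and the inclusion $\op{supp}\perm\subset\mcl T$ are correct and match the paper's argument, and the box-counting framework for the upper bound is also the right general idea. However, the key analytic estimates (i) and (ii), stated with respect to the \emph{Euclidean} metric, are incorrect, and the proof fails at this step. Estimate (ii) claims $\mu_h(B(z,r))\leq r^{Q\gamma-\zeta}$ \emph{uniformly} in $z$. This is false because of the multifractality of the LQG measure: the exponent $Q\gamma = 2+\gamma^2/2$ is only the correct one at Lebesgue-typical (i.e.\ $0$-thick) points, whereas for a $\gamma$-thick point $z$ (which is $\mu_h$-typical, and in particular a typical point on the space-filling curve parametrized by $\mu_h$-mass) one has $\mu_h(B(z,r))\asymp r^{2-\gamma^2/2}$, already violating your bound since $2-\gamma^2/2 < Q\gamma$. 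The best uniform Euclidean H\"older exponent for $\mu_h$ is $(2-\gamma)^2/2$, attained near $2$-thick points. Symmetrically, the worst-case Euclidean diameter of a $\mu_h$-mass-$(1/N)$ cell is controlled only by the \emph{lower} H\"older bound $\mu_h(B(z,r))\gtrsim r^{(2+\gamma)^2/2+\zeta}$, giving a uniform diameter bound with exponent $2/(2+\gamma)^2$, not $1/(Q\gamma)$. The crucial product of exponents is then
$$\frac{2}{(2+\gamma)^2}\cdot\frac{(2-\gamma)^2}{2} \;=\;\frac{(2-\gamma)^2}{(2+\gamma)^2}\;<\;1,$$
so your count yields only $X_N\lesssim N^{2-(2-\gamma)^2/(2+\gamma)^2}$, giving a dimension bound strictly \emph{greater} than $1$. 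The identity $\tfrac{1}{Q\gamma}\cdot Q\gamma=1$ that you flag as the ``structural fact'' is a tautology; the real question is whether the uniform diameter exponent and the uniform volume exponent are genuinely dual, and in the Euclidean metric they are not, precisely because of the multifractal spread of $\mu_h$.

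The paper avoids this obstruction by replacing the Euclidean metric with the LQG metric $D_h$. For $D_h$-balls, the estimate $r^{d_\gamma+\zeta}\leq\mu_h(\mcl B_r(z;D_h))\leq r^{d_\gamma-\zeta}$ holds \emph{uniformly} in $z$ (Lemma~\ref{lem-sphere-ball}, from~\cite{afs-metric-ball}) --- there is no multifractal spread because the $D_h$-metric is in exact duality with $\mu_h$. Likewise, SLE cells of $\mu_h$-mass $\ep$ have $D_h$-diameter $\leq\ep^{1/d_\gamma-\zeta}$ uniformly on compacts (Lemma~\ref{lem-sphere-holder}, from~\cite{gs-lqg-minkowski}), and this holds for all $\kappa>4$ regardless of whether $\kappa$ and $\gamma$ are matched. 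The product is $\tfrac{1}{d_\gamma}\cdot d_\gamma = 1$, now a genuine cancellation. The paper also phrases the count in annealed form (expected number of covering boxes, with a random time $S$ in one coordinate, Lemma~\ref{lem-sle-intersect-prob}) rather than almost surely, which slightly simplifies the argument; but the essential difference from your proposal is the passage to the LQG metric. Your secondary concern about the mismatched case is a non-issue with this fix, since~\cite{gs-lqg-minkowski} treats general $\kappa>4$ directly and no mating-of-trees coupling is invoked.
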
 

{It is easy to see that Hausdorff dimension of the support of every permuton is at least one. So, Theorem~\ref{thm-permuton-dim} says that $\op{supp}\perm$ is in some sense as small as possible. Equivalently, $\perm$ is as far as possible from being absolutely continuous with respect to Lebesgue measure.}

It is easy to check that $\op{supp} \perm \subset \mcl T$ (Lemma~\ref{lem-permuton-inclusion}), but the reverse inclusion need not hold. For example, if $\eta_1=\eta_2$ a.s.\ then $\op{supp} \perm$ is the diagonal in $[0,1]^2$ but $\mcl T$ contains off-diagonal points which arise from points in $\BB C$ which are hit more than once by $\eta_1$ or $\eta_2$. It is in general a subtle question to determine which points of $\mcl T$ belong to $\op{supp} \perm$. {This question was recently addressed in \cite{bg-permuton-msrble}.} 

The following corollary is a special case of Theorem~\ref{thm-permuton-dim}. It in particular answers a question from~\cite[Section 1.6]{borga-skew-permuton}. 
 
\begin{cor} \label{cor-skew-brownian}
For each choice of parameters $(\rho,q) \in (-1,1) \times (0,1)$, the closed support of the skew Brownian permuton with parameters $\rho,q$ has Hausdorff dimension 1. The same is true for the meandric permuton. 
\end{cor}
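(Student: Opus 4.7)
The plan is to derive Corollary~\ref{cor-skew-brownian} as an immediate consequence of Theorem~\ref{thm-permuton-dim}. Since Theorem~\ref{thm-permuton-dim} applies to the permuton $\perm$ constructed in~\eqref{eqn-permuton-def} for an \emph{arbitrary} choice of $\gamma \in (0,2)$, $\kappa_1,\kappa_2 > 4$, and coupling $(\eta_1,\eta_2)$, the entire task reduces to checking that both the meandric permuton and every skew Brownian permuton with parameters $(\rho,q) \in (-1,1)\times(0,1)$ fit into this general framework.

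For the meandric permuton, this is essentially by definition. According to Definition~\ref{def-meander-permuton}, the meandric permuton is exactly the permuton defined by~\eqref{eqn-permuton-def} with $\gamma = \sqrt{\frac{1}{3}(17-\sqrt{145})}$, $\kappa_1 = \kappa_2 = 8$, and independent whole-plane space-filling SLE$_8$ curves $\eta_1,\eta_2$ (modulo time parametrization). One only needs to verify the numerical inequality $\gamma \in (0,2)$, which is immediate since $17 - \sqrt{145} < 12$, and to observe that $8 > 4$. Hence Theorem~\ref{thm-permuton-dim} applies and gives $\dim_{\mathcal H}(\op{supp}\perm) = 1$.

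For the skew Brownian permuton with parameters $(\rho,q) \in (-1,1) \times (0,1)$, I would invoke the representation theorem from~\cite{borga-skew-permuton} (stated in this paper as Theorem~\ref{thm-skew-permuton}). That theorem furnishes, for each such $(\rho,q)$, a parameter $\gamma \in (0,2)$ and a specific coupling of two whole-plane space-filling SLE$_{\kappa}$ curves $(\eta_1,\eta_2)$ with $\kappa = 16/\gamma^2$, such that the permuton constructed by~\eqref{eqn-permuton-def} agrees in law with the skew Brownian permuton of parameters $(\rho,q)$. Because $\gamma \in (0,2)$, we have $\kappa = 16/\gamma^2 > 4$, so the hypotheses of Theorem~\ref{thm-permuton-dim} are satisfied, and the Hausdorff dimension of the closed support of the skew Brownian permuton is one.

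There is no real obstacle in the argument for the corollary itself; the substantive content is entirely contained in Theorem~\ref{thm-permuton-dim} (which must handle arbitrary coupling, including both the independent case relevant to the meander and the correlated case relevant to the skew Brownian permutons) and in the SLE/LQG representation of the skew Brownian permuton from~\cite{borga-skew-permuton}. The only point worth being careful about is to ensure the representation of Theorem~\ref{thm-skew-permuton} produces a permuton equal in \emph{law} to the skew Brownian permuton (so that the almost-sure dimension statement transfers), which is exactly what is asserted there.
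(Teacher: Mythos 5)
Your argument is correct and matches the paper's (implicit) reasoning: the paper treats Corollary~\ref{cor-skew-brownian} as an immediate special case of Theorem~\ref{thm-permuton-dim}, with Definition~\ref{def-meander-permuton} handling the meandric permuton and Theorem~\ref{thm-skew-permuton} supplying the SLE/LQG representation (and the equality in law) for the skew Brownian permutons. Your parameter checks ($\gamma \in (0,2)$, $\kappa_1,\kappa_2 > 4$, and $16/\gamma^2 > 4$ whenever $\gamma \in (0,2)$) are exactly the verifications needed.
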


It is shown in~\cite[Theorem 1.5]{maazoun-separable-permuton} that the dimension of the closed support of the biased Brownian separable permuton (i.e.\ the skew Brownian permuton with $\rho=1$ and $q\in (0,1)$; recall Remark~\ref{remark-separable}) is equal to 1. In fact, the one-dimensional Hausdorff measure of its support is bounded above by $\sqrt 2$. The proof in~\cite{maazoun-separable-permuton} is based on a description of the biased Brownian separable permuton in terms of Brownian motion, so it is possible (but not obvious) that a related proof could work for the skew Brownian permutons. However, arguments like those in~\cite{maazoun-separable-permuton} would not work for the meandric permuton or for other permutons generated from a pair of independent space-filling SLE curves.

Our proof of Theorem~\ref{thm-permuton-dim}, given in Section~\ref{sec-dim-proof}, will be based on a short argument using estimates for the Liouville quantum gravity metric which come from~\cite{afs-metric-ball,gs-lqg-minkowski}.

\subsubsection{Re-rooting invariance for the meandric permuton} 
\label{sec-re-root}

Let $\ell_n$ be a uniform meander of size $n$. 
For $k\in [1,2n]\cap\BB Z$, let $x_k$ be the $k$-th intersection point of $\ell_n$ with $\BB R$ in left-right order.
For $k\in [1,2n] \cap\BB Z$, we obtain a new meander $\ell_n^{(k)}$ as follows. See Figure~\ref{fig-meander-re-root} for an illustration. 
Let $y \in (-\infty,x_{k})$ be chosen so that there is no intersection point of $\ell_n$ with $\BB R$ in the interval $[y,x_{k})$ and set $f_y(z):=1/(z-y)$, for all $z\in\mathbb C$. 
We then take $\ell_n^{(k)}$ to be the meander which is the image of $\ell_n$ under the map 
\begin{itemize}
	\item $z\mapsto f_y(z)$, if $x_k$ is traversed by $\ell_n$ from top to bottom;
	\item  $z\mapsto \overline{f_y(z)}$, if $x_k$ is traversed by $\ell_n$ from bottom to top.
\end{itemize} 
We refer to it as the meander $\ell_n$ re-rooted at $x_k$.
The leftmost point of $\ell_n^{(k)} \cap \BB R$ is equal to $1/(x_k-y)$. Hence, we can identify $\ell_n^{(k)}$ with the loop $\ell_n$, but traversed started at $x_k$ instead of at $x_1$.

\begin{figure}[ht!]
\begin{center}
\includegraphics[scale=1]{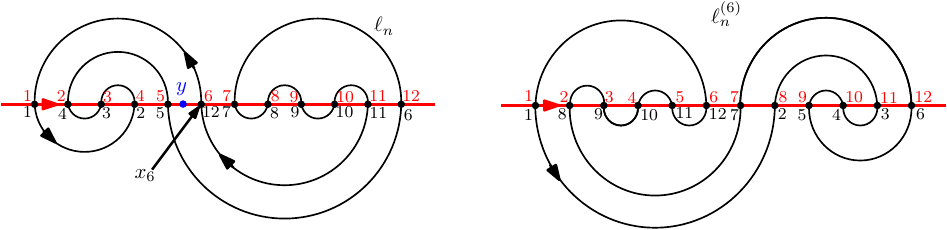}  
\caption{\label{fig-meander-re-root} \textbf{Left:} A meander $\ell_n$ and the corresponding meandric permutation $\sigma_{n}=1,4,3,2,5,12,7,8,9,10,11,6$. \textbf{Right:} The meander $\ell_n^{(6)}$ obtained by re-rooting $\ell_n$ at the point $x_{6}$ and the corresponding meandric permutation $\sigma^{(6)}_{n}=1,8,9,10,11,12,7,2,5,4,3,6$.}
\end{center}
\vspace{-3ex}
\end{figure}

The mapping $\ell_n \rta \ell_n^{(k)}$ is a bijection from the set of meanders of size $n$ to itself. Consequently, $\ell_n^{(k)}$ is also a uniform meander of size $n$. 
If $\sigma_n:=\sigma_{\ell_n}$ denotes the meandric permutation associated with $\ell_n$, then the meandric permutation associated with $\ell_n^{(k)}$ is 
\eqb \label{eqn-permutation-conj}
\sigma_n^{(k)} := \sigma_{\ell_n^{(k)}}= \tau_{\sigma_n(k)}^{-1} \circ \sigma_n  \circ \tau_k,  \quad \text{where} \quad \tau_j :=     j , j+1 , \dots,2n,1,2,\dots, j-1   .
\eqe 
Hence, the law of the uniform meandric permutation is invariant under the operation of pre- and post- composing with cyclic permutations in such a way that the resulting permutation still fixes 1. 
Our next theorem gives an analog of this property for the meandric permuton. See Figure~\ref{fig-re-root} for an illustration.

\begin{figure}[ht!]
\begin{center}
\includegraphics[scale=.85]{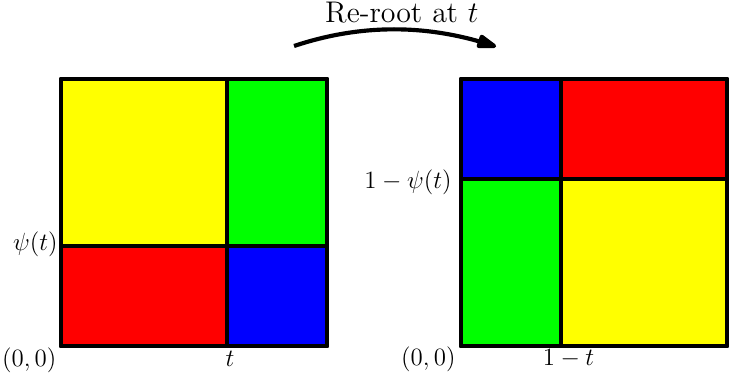}  
\caption{\label{fig-re-root} Visual representation of the re-rooting operation of Theorem~\ref{thm-permuton-re-root}. The re-rooted permuton $\perm_t$ is obtained by re-arranging the four rectangles shown in the figure, without changing the restrictions of the permuton to each of the four rectangles. One can check that conjugating by cyclic permutations as in~\eqref{eqn-permutation-conj} has a similar effect on the permuton associated with a given permutation (Definition~\ref{def-permutation}). 
}
\end{center}
\vspace{-3ex}
\end{figure}

\begin{thm} \label{thm-permuton-re-root}
Suppose that we are in the setting of Section~\ref{sec-permuton-def} with $\gamma\in (0,2)$ arbitrary, $\kappa_1 = \kappa_2 =8$, and $(\eta_1,\eta_2)$ are independent modulo time parametrization. 
Let $\perm$ be the permuton associated with $(\eta_1,\eta_2)$ as in~\eqref{eqn-permuton-def}.
For $t\in [0,1]$, let $\psi(t)$ be the (a.s.\ unique\footnote{
The a.s.\ uniqueness of $\psi(t)$ for a fixed $t\in [0,1]$ is proven in Lemma~\ref{lem-permuton-unique}, which also shows that $\psi(t)$ is the a.s.\ unique time for which $\eta_1(t) = \eta_2(\psi(t))$.
})
$s\in [0,1]$ for which $(t,s)$ is in the closed support of $\perm$. Let $\perm_t$ be the pushforward of the measure $\perm$ under the mapping $[0,1]^2 \rta [0,1]^2$ defined by
\eqbn
(u, v) \mapsto \left( u-t   - \lfloor u - t   \rfloor , v - \psi(t)   -  \lfloor v - \psi(t) \rfloor \right) . 
\eqen 
For each fixed $t\in [0,1]$, we have $\perm_t \eqD \perm$. 
\end{thm}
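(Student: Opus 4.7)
The plan is to show that $\perm_t$ is the permuton associated with a pair $(\tilde\eta_1, \tilde\eta_2)$ obtained by cyclically shifting the time parametrizations of $\eta_1, \eta_2$, and then to establish that $(\tilde\eta_1, \tilde\eta_2)$ has the same joint law as $(\eta_1, \eta_2)$ because the cyclic shift corresponds to re-rooting the $\gamma$-quantum sphere at a $\mu_h$-typical point.

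First, for fixed $t \in [0,1]$ I set $z := \eta_1(t) = \eta_2(\psi(t))$ and define $\tilde\eta_1(u) := \eta_1((u+t) \bmod 1)$ and $\tilde\eta_2(u) := \eta_2((u+\psi(t)) \bmod 1)$. Both curves are parametrized by $\mu_h$-mass and start at $z$. A short computation using~\eqref{eqn-permuton-def} and the identity $\tilde\psi(u) = \psi((u+t)\bmod 1) - \psi(t) \bmod 1$ shows that the permuton $\perm_{\tilde\eta_1,\tilde\eta_2}$ associated with $(\tilde\eta_1,\tilde\eta_2)$ coincides with $\perm_t$. It therefore suffices to prove the joint re-rooting invariance $(h,\tilde\eta_1,\tilde\eta_2) \eqD (h,\eta_1,\eta_2)$ as a field together with two curves on $\BB C$.

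I would prove this re-rooting invariance via four ingredients: (i) for fixed $t$, the conditional law of $z = \eta_1(t)$ given $h$ is $\mu_h$; (ii) the doubly-marked unit-area $\gamma$-quantum sphere is invariant under swapping its two marked points when one of them is $\mu_h$-sampled, so the re-rooted sphere $(h,z)$ has the same law as $(h,\infty)$; (iii) whole-plane space-filling SLE$_8$ has a M\"obius-invariant law modulo time parametrization, so on the re-rooted sphere each $\tilde\eta_i$ (which starts and ends at $z$) has the same law as $\eta_i$ had on the original sphere $(h,\infty)$; (iv) the independence $\eta_1 \perp \eta_2 \perp h$ lets these pieces be combined jointly. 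Together with the reformulation in Step~1, this yields $\perm_t \eqD \perm$.

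The main obstacle is ingredient (i). It is immediate that $\eta_1(T) \mid h \sim \mu_h$ when $T$ is uniform on $[0,1]$ and independent of everything else, because $\eta_1$ pushes Lebesgue measure forward to $\mu_h$; the non-trivial content is the \emph{stationarity in $t$} of the conditional law of $\eta_1(t) \mid h$. In the matched case $\gamma=\sqrt{2}$, $\kappa=8$ this follows from the mating of trees via translation invariance of the encoding two-dimensional Brownian motion, but the present setting is mismatched, so I would instead try to deduce stationarity from the conformal (in particular, affine and rotational) invariance of WP-SF-SLE$_8$ combined with the independence of $\eta_1$ from $h$. A backup strategy is to first establish $\perm_T \eqD \perm$ for $T$ uniform by the above re-rooting argument (which is an honest averaged statement), and then upgrade to each fixed $t$ using the cocycle relation $\perm_{t_1+t_2} = (\perm_{t_1})_{t_2}$ together with continuity of $t \mapsto \perm_t$ in law.
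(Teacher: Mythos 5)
Your overall reduction matches the paper's: Step 1 (reformulating $\perm_t$ as the permuton of the re-rooted pair of curves) is precisely how the paper deduces the theorem from its Proposition~\ref{prop-sle-lqg-re-root}, and your backup strategy — first prove re-rooting invariance for a uniformly random time $T$, then upgrade to fixed $t$ via a cocycle relation — is the paper's Lemma~\ref{lem-sle-lqg-re-root0} followed by the argument using $\mcl R_s\circ\mcl R_t=\mcl R_{[s+t]}$. Two remarks on that upgrade step: the cocycle trick alone suffices, because $[t+T]$ is again uniform, so $\mcl R_t(\mcl R_T(\cdot))\eqD\mcl R_t(\cdot)$ together with $\mcl R_t\circ\mcl R_T=\mcl R_{[t+T]}\eqD\mathrm{id}$ in law gives the result outright; the appeal to continuity of $t\mapsto\perm_t$ in law is unnecessary and, on its own, would not bridge the gap (an average over $t$ being stationary does not force each fixed-$t$ law to equal the average, even with continuity).

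The genuine gap is in ingredient (iii). You state that ``whole-plane space-filling SLE$_8$ has a M\"obius-invariant law modulo time parametrization,'' but the translation/scaling/rotation invariance of whole-plane space-filling SLE$_\kappa$ that follows from its construction holds for \emph{all} $\kappa>4$ and only covers M\"obius maps fixing $\infty$. What is actually needed is invariance under re-rooting at a finite point $z$ — equivalently, invariance under a M\"obius map sending $z\mapsto\infty$ — and this is \emph{not} a general property of space-filling SLE$_\kappa$. It is a nontrivial fact (the paper's Proposition~\ref{prop-sle-re-root}) that this holds precisely when $\kappa=8$, with the proof resting on the reversibility of chordal space-filling SLE$_\kappa$, which is true iff $\kappa=8$ by~\cite[Theorem 1.19]{ig4}. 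This is exactly why the theorem requires $\kappa_1=\kappa_2=8$, a constraint your proposal does not explain. To complete the argument you would need to prove this re-rooting invariance for whole-plane SF-SLE$_8$ directly, e.g.\ by realizing the curve as the concatenation of two chordal space-filling SLE$_8$'s in the complement of a pair of flow lines from $z$ to $\infty$, and appealing to the reversibility of whole-plane SLE$_2(0)$, chordal SLE$_2(-1;-1)$, and chordal space-filling SLE$_8$; this is where the bulk of the work lies.
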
 

As we will explain in Section~\ref{sec-re-root-proof}, Theorem~\ref{thm-permuton-re-root} is a consequence of a special re-rooting invariance property for SLE$_8$ (Proposition~\ref{prop-sle-re-root}). 
{It is shown in~\cite[Corollary 1.6]{bg-permuton-msrble} that the statement of Theorem~\ref{thm-permuton-re-root} is not true when either $\kappa_1$ or $\kappa_2$ is not equal to 8, using the fact that the permuton $\perm$ a.s.\ determines $\eta_1$, $\eta_2$, and $h$ modulo conformal and anticonformal maps.}
This suggests that the scaling limit of a uniform meandric permutation \emph{cannot} be one of the {permutons} of Section~\ref{sec-permuton-def} for $\kappa_1,\kappa_2\not=8$, which provides some further evidence in favor of Conjecture~\ref{conj-meander-permuton} (beyond the evidence discussed in Section~\ref{sec-meander-conjecture}). We do not yet have a special property which singles out the LQG parameter $\gamma =  \sqrt{\frac13 \left( 17 - \sqrt{145} \right)}$.

\subsubsection{Pattern density for the meandric permuton} 
\label{sec-density}
Let $\perm$ be a (possibly random) permuton and $n$ be a positive integer. Let $(\vec{X},\vec{Y})=(X_i, Y_i)_{i\in [1,n]\cap\BB Z}$ be $n$ i.i.d.\ points in $[0,1]^2$
sampled from  $ \perm$. 
Then the $n$ points almost surely have distinct $x$ and $y$ coordinates. For $j \in [1,n]\cap\BB Z$, let $\tau(j) $ be such that if $x_j$ is the $j$-th smallest  element in $(X_\ell)_{\ell\in [1,n]\cap\BB Z}$, then $y_j$ is the $\tau(j)$-th  smallest element in $(Y_\ell)_{\ell\in [1,n]\cap\BB Z}$. Then  $\tau$ is  a permutation of size $n$, which  we denote by $\Perm_n(\perm,\vec{X},\vec{Y})$.  For a fixed permutation $\sigma$ of size $n$, the probabilities $\BB P[\Perm_n(\perm,\vec{X},\vec{Y}) = \sigma  ]$ and $\BB P[\Perm_n(\perm,\vec{X},\vec{Y}) = \sigma \,|\, \perm]$ are called the \textbf{annealed} and \textbf{quenched pattern densities} of $\sigma$ in $\perm$.

If $\perm$ is a skew Brownian permuton, it was shown in~\cite[Proposition 1.14]{bhsy-baxter-permuton}  that the annealed pattern densities $\mathbbm P[\Perm_2(\perm,\vec{X},\vec{Y})=1,2]$ and  $\mathbb P[\Perm_2(\perm,\vec{X},\vec{Y})=2,1]$ are  $\frac{1}{2}\pm\frac{\theta}{\pi}$, where $\theta\in[-\frac \pi 2, \frac \pi 2]$ is the angle between the two space-filling SLE curves $(\eta_1,\eta_2)$ in their imaginary geometry coupling (see Theorem~\ref{thm-skew-permuton}). The exact relation between   $\theta$  and the parameters $(\rho,q)$ for the skew Brownian permuton is not known. For $n\ge 3$, although the distribution of $\Perm_n(\perm,\vec{X},\vec{Y})$ can in principle be expressed by SLE and LQG quantities, the relation would be quite involved; as explained in \cite[Remark 3.14]{bhsy-baxter-permuton}.

For a class of permutons including the meandric permuton, we have the following conceptually straightforward expression.

\begin{thm}\label{thm:density}
Suppose that we are in the setting of Section~\ref{sec-permuton-def} with $\gamma\in (0,2)$, $\kappa_1 , \kappa_2 >4$, and $(\eta_1,\eta_2)$ are independent modulo time parametrization. Recall that $\mu_h$ denotes the $\gamma$-LQG area measure corresponding to a singly marked unit area $\gamma$-Liouville quantum sphere. Let $\perm$ be the permuton associated with $(\eta_1,\eta_2)$ as in~\eqref{eqn-permuton-def}. For $i\in \{1,2\}$ and $z_1,\cdots, z_n\in \mathbb C$, let $P_{\kappa_i}(z_1,\cdots, z_n)$ be the probability that $\eta_i$ hits $z_j$ before $z_{j+1}$ for each $j=1,\dots,n-1$.  Let $\zeta(z_1,\cdots,z_n)$ be the joint density of $n$ points independently sampled from the measure $\mu_h$, i.e., $\zeta(z_1,\cdots,z_n)$ is the density of the probability measure $\BB E[\mu_h \otimes \cdots \otimes \mu_h]$ on $\BB C^n$. For a permutation $\sigma$ of size $n$,  the annealed pattern density is given by
\begin{equation}\label{eq:density}
\BB P\left[   \Perm_n(\perm,\vec{X},\vec{Y}) = \sigma \right]=n!\int_{\BB C^n} P_{\kappa_1}(z_1,\cdots, z_n)P_{\kappa_2}(z_{\sigma(1)},\cdots,z_{\sigma(n)}) \zeta (z_1,\cdots, z_n)\prod_{j=1}^n dz_j.
\end{equation}
\end{thm}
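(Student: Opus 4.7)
The plan is to reduce the pattern density to an integral involving hitting-order probabilities of $\eta_1$ and $\eta_2$ against the joint moment measure of $\mu_h$, exploiting that $\eta_1$ and $\eta_2$ are independent of each other and of $h$ modulo time parametrization.

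First, I would express an i.i.d.\ sample $(X_i,Y_i)_{i=1}^n$ from $\perm$ in terms of i.i.d.\ samples from $\mu_h$. Since each of $\eta_1,\eta_2$ is parametrized by $\mu_h$-mass and visits $\mu_h$-a.e.\ point of $\BB C$ exactly once, sampling $T$ uniformly on $[0,1]$ and setting $(X,Y)=(T,\psi(T))$ is equivalent in law to sampling $W\sim\mu_h$ and setting $(X,Y)=(\eta_1^{-1}(W),\eta_2^{-1}(W))$. Iterating, I may write $(X_i,Y_i)=(\eta_1^{-1}(W_i),\eta_2^{-1}(W_i))$ for i.i.d.\ $W_1,\dots,W_n\sim\mu_h$ (conditional on $h$). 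For $i\in\{1,2\}$, let $\pi^{(i)}\in S_n$ be the permutation such that $\eta_i$ hits $W_{\pi^{(i)}(1)},\dots,W_{\pi^{(i)}(n)}$ in this order. A direct unpacking of the definition of $\Perm_n$ shows that $\{\Perm_n(\perm,\vec X,\vec Y)=\sigma\}$ is equivalent to a simple combinatorial identity on $(\pi^{(1)},\pi^{(2)})$ (of the form $\pi^{(2)}=\pi^{(1)}\circ\sigma^{\pm 1}$) satisfied by exactly $n!$ ordered pairs.

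Next I would condition on $(h,W_1,\dots,W_n)$. Since $\eta_1$ and $\eta_2$ modulo time parametrization are independent of each other and of $h$, and the events $\{\pi^{(i)}=\pi\}$ depend only on the unparametrized curves given the $W_i$'s,
\[
\BB P[\pi^{(1)}=\pi,\ \pi^{(2)}=\pi'\mid h,\,W_1,\dots,W_n]=P_{\kappa_1}(W_{\pi(1)},\dots,W_{\pi(n)})\cdot P_{\kappa_2}(W_{\pi'(1)},\dots,W_{\pi'(n)})
\]
by the very definition of $P_{\kappa_i}$. Summing over the $n!$ admissible pairs and taking expectation against the unconditional joint law of $(W_1,\dots,W_n)$, whose density on $\BB C^n$ is $\zeta$ by definition, gives an expression of the form $\sum_{\pi}\int P_{\kappa_1}(\dots)P_{\kappa_2}(\dots)\zeta\,dz$. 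In each summand I would apply the measure-preserving substitution $z_j:=W_{\pi(j)}$ and use the symmetry of $\zeta$ (from exchangeability of the $W_i$'s) to show that all $n!$ integrals are equal, which collapses the sum into the prefactor $n!$ of~\eqref{eq:density}.

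The main ingredients to invoke are: (i) the existence of $\zeta$ as a bona fide symmetric function on $\BB C^n$, which follows from standard moment and absolute-continuity estimates for the $\gamma$-LQG area measure from the Gaussian multiplicative chaos literature; and (ii) the fact that on a $\zeta$-full-measure set of $n$-tuples, the $W_i$ are distinct and each is hit exactly once by each $\eta_i$, so the $\pi^{(i)}$ and $P_{\kappa_i}$ are well-defined --- this is a consequence of space-filling SLE$_\kappa$ with $\kappa>4$ visiting $\mu_h$-a.e.\ point exactly once. Modulo these standard facts, the argument is an elementary book-keeping exercise; the crux is not analytic estimation but the observation that the independence of $\eta_1$ and $\eta_2$ is exactly what produces the product structure $P_{\kappa_1}\cdot P_{\kappa_2}$ in the integrand.
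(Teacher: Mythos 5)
Your proposal is correct and is essentially a careful fleshing-out of the paper's (very terse) argument: the paper likewise proves the formula by invoking the independence of $\eta_1,\eta_2$ from each other and from $h$, together with the definitions of $\perm$ and $\Perm_n$, and observes that the factor $n!$ counts the possible orders in which $\eta_1$ hits the $n$ points. The one loose end you flag with the $\sigma^{\pm1}$ hedge is worth resolving: unpacking the definition of $\Perm_n$ gives $\tau=(\pi^{(2)})^{-1}\circ\pi^{(1)}$, so $\{\tau=\sigma\}$ corresponds to $\pi^{(2)}=\pi^{(1)}\circ\sigma^{-1}$ (exponent $-1$), and the change of variables then produces $P_{\kappa_1}(z_{\sigma(1)},\dots,z_{\sigma(n)})\,P_{\kappa_2}(z_1,\dots,z_n)$, which matches~\eqref{eq:density} only after a further relabelling (or if $\kappa_1=\kappa_2$); this is a cosmetic convention issue already implicit in the paper's statement rather than a gap in your argument.
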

\begin{proof}
This follows from the independence of $\eta_1$ and $\eta_2$, and the definitions of $\perm$ and  $ \Perm_n(\perm,\vec{X},\vec{Y})$. The factor $n!$ enumerates the possible orders in which $\eta_1$ can hit $n$ distinct points on the plane. 
\end{proof}

The law of the field $h$ in Theorem~\ref{thm:density} describing a singly marked unit area $\gamma$-Liouville quantum sphere is not unique (rather, it is only defined up to a conformal change of coordinates, see Section~\ref{sec-lqg}). As explained in~\cite[Section 1.4]{BW-LCFT}, for an appropriate choice of $h$ the density function $\zeta(z_1,\cdots,z_n)$ can be expressed as correlation functions in Liouville conformal field theory on the sphere with all vertex insertions $\alpha_1,\dots,\alpha_n$ being equal to $\gamma$.
These functions  were recently solved  exactly via conformal bootstrap; see~\cite[Theorem~1.1]{GKRV-sphere} and~\cite[Theorem~9.3]{GKRV-Segal}. The probability $P_\kappa(z_1,\cdots, z_n)$ seems quite difficult to evaluate exactly. We list it as Problem~\ref{prob-sle-order} in Section~\ref{sec-open-problems}.

\begin{remark}[Positive quenched pattern density]
For a permutation $\sigma$ of size $n$, let $\pocc(\sigma, \perm)=\BB P[\Perm_n( \perm,\vec{X},\vec{Y}) =\sigma \mid \perm ]$ be the quenched pattern density of $\sigma$ in $\perm$.  It was shown in~\cite[Theorem 1.10]{bhsy-baxter-permuton} that $\pocc(\sigma, \perm)>0$ a.s.\ for all permutations $\sigma$ when $\perm$ is a skew Brownian permuton (with $\rho\neq 1$). 
Essentially the same arguments as in~\cite[Section 4]{bhsy-baxter-permuton} show that $\pocc(\sigma,\perm) > 0$ a.s.\ in the setting of Theorem~\ref{thm:density} and also that  $P_\kappa(z_1,\dots,z_n) > 0$ for each $\kappa > 4$ and each distinct $z_1,\dots,z_n \in \BB C$.
As explained in~\cite[Corollary 1.8]{bhsy-baxter-permuton}, this implies that random permutations converging to $\perm$ as in Theorem~\ref{thm:density} have asympotically positive density for all permutation patterns  in the quenched sense. 
\end{remark}

\section{Preliminaries}
\label{sec-background}

\subsection{Gaussian free field} 
\label{sec-gff}

In what follows, we use the notation
\eqb \label{eqn-ball}
B_r(z) := \text{open Euclidean ball of radius $r$ centered at $z$}. 
\eqe
For $z , w \in\BB C$, we also define
\eqb \label{eqn-cov-kernel}
|z|_+ := \max\{|z| , 1\} \quad \text{and} \quad G_{\BB C}(z,w) := \log \frac{|z|_+  |w|_+ }{|z-w| }  .
\eqe 
The \textbf{whole-plane Gaussian free field (GFF)} is the centered Gaussian random generalized function (distribution) on $\BB C$ with covariances
\eqb \label{eqn-gff-cov}
\op{Cov}(h^{\BB C}(z) , h^{\BB C}(w))  := G_{\BB C}(z,w). 
\eqe
Note that $h^{\BB C}$ is not well-defined pointwise since the covariance kernel~\eqref{eqn-cov-kernel} diverges to $\infty$ as $z\rta w$. 
Nevertheless, for $z\in\BB C$ and $r>0$, one can define the average of $h^{\BB C}$ over the circle $\bdy B_r(z)$, which we denote by $h^{\BB C}_r(z)$. 
See~\cite[Section 3.1]{shef-kpz} for further discussion. 
 
One often views the whole-plane GFF as being defined modulo additive constant. Our choice of covariance kernel in~\eqref{eqn-gff-cov} corresponds to fixing the additive constant so that the circle average $h_1^{\BB C}(0)$ is zero (see, e.g.,~\cite[Section 2.1.1]{vargas-dozz-notes}). The law of the whole-plane GFF, viewed modulo additive constant, is invariant under complex affine transformations of $\BB C$. Equivalently,
\eqb \label{eqn-gff-translate}
h^{\BB C} \eqD h^{\BB C}(a\cdot + b) - h^{\BB C}_{|a|}(b) , \quad\forall a\in\BB C\setminus \{0\}, \quad\forall b\in\BB C . 
\eqe

{Let $U\subset\BB C$ be an open domain such that Brownian motion started at any point of $U$ a.s.\ exits $U$ at a finite time. We will also have occasion to mention the \textbf{zero-boundary GFF} on $U$, which is the centered Gaussian process on $U$ with covariance kernel given by the zero-boundary Green's function on $U$. Unlike the whole-plane GFF, one does not need to make a choice of additive constant for the zero-boundary GFF.}

We refer to~\cite{shef-gff,bp-lqg-notes,pw-gff-notes} for more background on the GFF. 

\subsection{Liouville quantum gravity} 
\label{sec-lqg}

In this subsection we give a brief review of the theory of Liouville quantum gravity (LQG). Our intention is to provide just enough background for the reader to understand the proofs in this paper. We refer to~\cite{gwynne-ams-survey,sheffield-icm} for brief introduction to LQG and to~\cite{bp-lqg-notes} for a detailed exposition. 
Let $\gamma \in (0,2)$ and let
\eqb \label{eqn-Q}
Q := \frac{2}{\gamma} + \frac{\gamma}{2} .
\eqe

\begin{defn} \label{def-lqg-surface}
A \textbf{$\gamma$-Liouville quantum gravity (LQG) surface with $k\in\BB N_0$ marked points} is an equivalence class of $k+2$-tuples $(U,h,z_1,\dots,z_k)$, where 
\begin{itemize}
\item $U\subset\BB C$ is open and $z_1,\dots,z_k \in U\cup \bdy U$, with $\bdy U$ viewed as a collection of prime ends;
\item $h$ is a generalized function on $U$ (which we will always take to be some variant of the Gaussian free field);
\item $(U,h,z_1,\dots,z_k)$ and $(\wt U, \wt h,\wt z_1,\dots,\wt z_k)$ are declared to be equivalent if there is a conformal map $\phi : \wt U \rta U$ such that
\eqb \label{eqn-lqg-coord}
\wt h = h\circ \phi + Q\log|\phi'| \quad \text{and} \quad \phi(\wt z_j)  = z_j ,\quad \forall j =1,\dots k ,
\eqe
where $Q$ is as in~\eqref{eqn-Q}.
\end{itemize}
If $(U,h,z_1,\dots,z_k)$ is an equivalence class representative, we refer to $h$ as an \textbf{embedding} of the quantum surface into $(U,z_1,\dots,z_k)$. 
\end{defn}
 
In this paper, all of the LQG surfaces we consider will be random, and the generalized function $h$ will be a \textbf{GFF plus a continuous function}, meaning that there is a coupling $(h , h^U)$ of $h$ with the whole-plane or zero-boundary GFF $h^U$ on $U$ (as appropriate) such that a.s.\ $h-h^U$ is a function which is continuous on $U$ except possibly at finitely many points.

If $h$ is a whole-plane GFF plus a continuous function, one can define the \textbf{LQG area measure} $\mu_h$ on $U$, which is a limit of regularized versions of $e^{\gamma h} \,d^2 z$, where $d^2z$ denotes Lebesgue measure on $U$~\cite{kahane,shef-kpz,rhodes-vargas-log-kpz}. Almost surely, the measure $\mu_h$ assigns positive mass to every open set and zero mass to every point, but is mutually singular with respect to Lebesgue measure. 

It was shown in~\cite{dddf-lfpp,gm-uniqueness} that one can also define the \textbf{LQG metric} $D_h$ on $U$, which is a limit of regularized versions of the Riemannian distance function associated with the Riemannian metric tensor $e^{\gamma h} (dx^2+dy^2)$, where $dx^2+dy^2$ is the Euclidean metric tensor on $U$. Almost surely, the metric $D_h$ induces the same topology on $U$ as the Euclidean metric and is a length metric (i.e., $D_h(z,w)$ is the infimum of the $D_h$-lengths of paths in $U$ from $z$ to $w$). The Hausdorff dimension of $(U,D_h)$ is a.s.\ equal to a deterministic, $\gamma$-dependent constant $d_\gamma > 2$ which is not known explicitly~\cite[Corollary 1.7]{gp-kpz}. See~\cite{ddg-metric-survey} for a survey of known results about the LQG metric. 

Both $\mu_h$ and $D_h$ are compatible with coordinate changes of the form~\eqref{eqn-lqg-coord}. That is, if $\phi : \wt U \rta U$ is a conformal map and $h$ and $\wt h$ are related as in~\eqref{eqn-lqg-coord}, then a.s.\
\eqb \label{eqn-metric-measure-coord}
\mu_h(A) = \mu_{\wt h}(\phi^{-1}(A)), \quad \text{$\forall$ Borel set $A\subset U$} \quad \text{and} \quad
D_h(z,w) = D_{\wt h}(\phi^{-1}(z) , \phi^{-1}(w)), \quad\forall z,w\in U .
\eqe 
See~\cite[Proposition 2.1]{shef-kpz} for $\mu_h$ and~\cite[Theorem 1.3]{gm-coord-change} for $D_h$. 

Both $\mu_h$ and $D_h$ are also locally determined by $h$. In the case of $\mu_h$, this means that for each open set $V\subset \BB C$, a.s.\ $\mu_h|_V$ is a measurable function of $h|_V$ (this follows, e.g., from the circle average construction of $\mu_h$ in~\cite{shef-kpz}). 
In the case of $D_h$, the locality property says the following. For $V\subset \BB C$, we define the \textbf{internal metric on $V$} to be the metric $D_h(\cdot,\cdot;V)$ on $V$ defined by
\eqb \label{eqn-internal-metric}
D_h(z,w;V) = \inf\left\{ \left(\text{$D_h$-length of $P$} \right) \,:\, \text{$P$ is a path in $V$ from $z$ to $w$}\right\}.
\eqe 
Then a.s.\ $D_h(\cdot,\cdot;V)$ is given by a measurable function of $h|_V$~\cite[Axiom II]{gm-uniqueness}.  

\subsection{The quantum sphere}
\label{sec-sphere}

For $\gamma\in (0,2)$, the unit area quantum sphere is the canonical LQG surface with the topology of the sphere. One sense in which this LQG surface is canonical is that it is expected, and in some cases proven, to describe the scaling limit of various types of random planar maps. There are a variety of different ways of defining the unit area quantum sphere. The first definitions appeared in~\cite{wedges,dkrv-lqg-sphere} and were proven to be equivalent in~\cite{ahs-sphere}. The definition we give here is~\cite[Definition 2.2]{ahs-sphere} with $k=3$ and $\alpha_1=\alpha_2=\alpha_3=\gamma$. 

\begin{defn} \label{def-sphere} 
Using the notation~\eqref{eqn-cov-kernel}, we define the \textbf{Liouville field}
\eqb \label{eqn-sphere-gaussian}
h_{\op{L}} := h^{\BB C} +  \gamma G_{\BB C}(1 ,\cdot) + \gamma G_{\BB C}(0,\cdot)  - (2Q-\gamma) \log|\cdot|_+ 
\eqe  
and we let $\mu_{h_{\op{L}}}$ be its associated $\gamma$-LQG measure. 

\begin{itemize} 
\item The \textbf{triply marked unit area quantum sphere} is the LQG surface $(\BB C , h , \infty, 0,1 )$, where the law $\BB P_h$ of $h$ is obtained from the law  $\BB P_{\tilde h_{\op{L}}}$ of
\eqb \label{eqn-sphere-subtract}
\tilde h_{\op{L}}:=h_{\op{L}} - \frac{1}{\gamma} \log   \mu_{h_{\op{L}}}(\BB C)
\eqe 
weighted by a $\gamma$-dependent constant times $\left[\mu_{h_{\op{L}}}(\BB C)\right]^{4/\gamma^2 - 2}$, i.e.\ $\frac{d\BB P_h}{d \BB P_{\tilde h_{\op{L}}}}=C(\gamma)\cdot\left[\mu_{h_{\op{L}}}(\BB C)\right]^{4/\gamma^2 - 2}$. 
\item The \textbf{doubly (resp.\ singly) marked unit area quantum sphere} is the doubly (resp.\ singly) marked quantum surface obtained from the triply marked unit area quantum sphere by forgetting the marked point at 1 (resp.\ the marked points at 1 and 0). 
\end{itemize}
\end{defn} 

We note that subtracting $\frac{1}{\gamma} \log \mu_{h_{\op{L}}}(\BB C)$ in~\eqref{eqn-sphere-subtract} makes it so that the unit area quantum sphere satisfies $\mu_h(\BB C) = 1$ a.s. The triply marked quantum sphere has only one possible embedding in $(\BB C , \infty,0,1)$ (Definition~\ref{def-lqg-surface}). This is because the only conformal automorphism of the Riemann sphere which fixes $\infty,0,1$ is the identity. By contrast, the singly (resp.\ doubly) marked quantum sphere has multiple possible embeddings, obtained by applying~\eqref{eqn-lqg-coord} when $\phi$ is a complex affine transformation (resp.\ a rotation and scaling). Any statements for a singly or doubly marked quantum sphere are assumed to be independent of the choice of embedding, unless otherwise specified. Many of the results in this paper are stated for a singly marked quantum sphere since we only need one marked point (i.e., the starting point of the SLE curves). However, it is sometimes convenient to work with a triply marked quantum sphere so that we can apply the exact description of the field from Definition~\ref{def-sphere}.

It is shown in~\cite[Proposition A.13]{wedges} that the marked points for the quantum sphere are independent samples from its associated LQG area measure. 

\begin{lem}[\cite{wedges}] \label{lem-sphere-pt}
Let $(\BB C  , h , \infty , 0 , 1  )$ be a triply marked unit area $\gamma$-Liouville quantum sphere. Conditional on $h$, let $Z_1,Z_2,Z_3\in\BB C$ be conditionally independent samples from the $\gamma$-LQG area measure $\mu_h$.  Then $(\BB C , h , Z_1,Z_2,Z_3)$ is a triply marked unit area quantum sphere, i.e., if $\phi : \BB C \cup\{\infty\} \rta \BB C\cup \{\infty\}$ is a conformal map such that $\phi(\infty)=Z_1$, $\phi(0) =Z_2$, and $\phi(1) =Z_3$, then $h\circ \phi  +Q\log|\phi'| \eqD h$. 
\end{lem}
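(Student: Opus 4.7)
The plan is to verify the distributional identity by a direct change-of-variables argument on the Liouville field of Definition~\ref{def-sphere}. Let $\phi:\BB C\cup\{\infty\}\to\BB C\cup\{\infty\}$ denote the (a.s.\ unique) Möbius transformation sending $(\infty,0,1)$ to $(Z_1,Z_2,Z_3)$; the natural re-embedding of $(\BB C,h,Z_1,Z_2,Z_3)$ into $(\BB C,\infty,0,1)$ is $\tilde h := h\circ\phi + Q\log|\phi'|$, and the goal is to show $\tilde h \eqD h$ as generalized functions on $\BB C$.

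First I would unfold the definition of the law of $h$. For any bounded continuous test functional $F$, using $\mu_h(\BB C)=1$ (so that $d\mu_h=\mu_{h_L}(\BB C)^{-1}\,d\mu_{h_L}$) and writing $\phi_{\vec z}$ for the Möbius map sending $(\infty,0,1)$ to $\vec z=(z_1,z_2,z_3)$, the definition of the sphere field gives
\begin{equation*}
\BB E[F(\tilde h)] = C(\gamma)\,\BB E_{h_L}\!\left[\mu_{h_L}(\BB C)^{4/\gamma^2-5}\iiint_{\BB C^3} F\bigl(\tilde h_L\circ\phi_{\vec z}+Q\log|\phi_{\vec z}'|\bigr)\,d\mu_{h_L}(z_1)\,d\mu_{h_L}(z_2)\,d\mu_{h_L}(z_3)\right].
\end{equation*}
The heart of the proof is then a Girsanov shift on the underlying whole-plane GFF: via the circle-average construction $d\mu_{h_L}(z_i)=\lim_{\ep\to 0}\ep^{\gamma^2/2} e^{\gamma(h_L)_\ep(z_i)}\,d^2 z_i$, each factor $d\mu_{h_L}(z_i)$ reweights the law of $h^{\BB C}$ so as to add a $\gamma G_{\BB C}(z_i,\cdot)$ shift, i.e.\ an additional $\gamma$-log insertion at $z_i$. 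Applying the coordinate-change rule~\eqref{eqn-lqg-coord} with $\phi_{\vec z}$ moves these three new insertions at $\vec z$ to $(\infty,0,1)$, while the three insertions originally built into $h_L$ at $(\infty,0,1)$ are transported to $\phi_{\vec z}^{-1}\{\infty,0,1\}$. Because the latter three points are then integrated out against Lebesgue measure (with appropriate $|\phi_{\vec z}'|^{\gamma Q}$ and $|\cdot|_+$ Jacobians), one is left with a field whose remaining insertions are precisely the three $\gamma$-insertions at $(\infty,0,1)$ characterizing $h_L$.

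The main obstacle is the final algebraic bookkeeping. One must verify that the Girsanov Radon--Nikodym prefactors (products of $G_{\BB C}(z_i,z_j)$ and $G_{\BB C}(z_i,\cdot)$ cross-terms), together with the Jacobian $|\phi_{\vec z}'|^{\gamma Q}$ from the coordinate change and the weight $\mu_{h_L}(\BB C)^{4/\gamma^2-5}$, combine after integration over $\vec z$ to reproduce exactly the original sphere density $C(\gamma)\,\mu_{h_L}(\BB C)^{4/\gamma^2-2}$ against $F(\tilde h_L)$ --- thereby identifying the right-hand side with $\BB E[F(h)]$. The specific exponent $4/\gamma^2-2$ in Definition~\ref{def-sphere} is the Liouville/KPZ exponent associated with three $\gamma$-insertions, and its appearance is precisely calibrated to make this cancellation close; verifying that it does is essentially the vertex-operator invariance underlying conformal covariance of the Liouville measure. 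A conceptually cleaner but technically separate route is to invoke the equivalence of Definition~\ref{def-sphere} with the Bessel-excursion construction of the quantum sphere in~\cite{wedges}, where the analogous invariance (Proposition~A.13 there) is derived directly from the mating-of-trees framework; this sidesteps the explicit Liouville algebra at the cost of translating between definitions via~\cite{ahs-sphere}.
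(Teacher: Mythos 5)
The paper gives no proof of this lemma at all: it is stated as a direct citation to \cite[Proposition~A.13]{wedges}, where the invariance is derived via the Bessel-excursion (mating-of-trees) construction of the quantum sphere rather than via Definition~\ref{def-sphere}. Your final paragraph correctly identifies this route, so you did locate what the paper actually does.

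Your main proposed route --- a Girsanov (rooted-measure) tilt by the three $\mu_{h_L}$ samples followed by the coordinate-change rule for $\phi_{\vec z}$ --- is indeed the standard LCFT way to prove Möbius covariance of correlation functions, and the setup (including the weight $\mu_{h_L}(\BB C)^{4/\gamma^2-5}$) is correct. But the proposal does not constitute a proof: you stop exactly at the step that carries all the content, writing ``the main obstacle is the final algebraic bookkeeping\dots verifying that it does is essentially the vertex-operator invariance.'' Two concrete gaps that make this step nontrivial and should be addressed if you want to carry the computation through: (i) the Möbius map $\phi_{\vec z}$ depends on the integration variables $\vec z$, so ``the latter three points are then integrated out against Lebesgue measure'' requires an explicit change of variables $\vec z\mapsto \vec w = \phi_{\vec z}^{-1}(\infty,0,1)$, and you need to check that the Möbius Jacobian, the factors $|\phi_{\vec z}'|^{\gamma Q}$ from the coordinate change, the circle-average renormalization constants $\ep^{\gamma^2/2}$, and the $|\cdot|_+$ corrections in $G_{\BB C}$ precisely cancel; (ii) the insertion at $\infty$ in $h_{\op{L}}$ is encoded via the $-(2Q-\gamma)\log|\cdot|_+$ term rather than a $\gamma G_{\BB C}$-term, so the ``moved'' insertion at $\phi_{\vec z}^{-1}(\infty)$ is not literally of the same form as the others and the bookkeeping must track this asymmetry. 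Both points are resolvable, but until they are written out the argument is a plan, not a proof; given that, the cleanest move (and the paper's move) is simply to cite \cite[Proposition~A.13]{wedges}, invoking \cite{ahs-sphere} for the equivalence of definitions.
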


The following absolute continuity statement allows us to transfer various results from the whole-plane GFF to the quantum sphere. 

\begin{lem} \label{lem-sphere-abs-cont}
Let $(\BB C , h , \infty, 0, 1  )$ be a triply marked quantum sphere, let $h^{\BB C}$ be a whole-plane GFF, and let
\eqb \label{eqn-sphere-abs-cont}
\wt h_* := h^{\BB C} - \gamma\log|\cdot| - \gamma \log|\cdot-1| 
\qquad 
\text{and} 
\qquad 
\wt h := h^{\BB C} - \gamma\log |\cdot|  .
\eqe
For each $R > 0$, the laws of $h|_{B_R(0)}$ and $\wt h_*|_{B_R(0)}$, viewed as distributions modulo additive constant, are mutually absolutely continuous.
Furthermore, for each $R > 0$, the laws of $h|_{B_R(0) \setminus \ol{B_{1/R}(1)}}$ and $\wt h|_{B_R(0) \setminus \ol{B_{1/R}(1)}}$, viewed as distributions modulo additive constant, are mutually absolutely continuous. 
\end{lem}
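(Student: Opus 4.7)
The plan is to view $h_{\op L}$, $\wt h_*$, and $\wt h$ as whole-plane GFFs (modulo additive constant) plus explicit deterministic shifts, compare those shifts via a Cameron--Martin argument on the relevant set, and then transfer the comparison from $h_{\op L}$ to the quantum sphere field $h$ using the Radon--Nikodym description in Definition~\ref{def-sphere}.

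First, a direct calculation using $G_{\BB C}(0,z) = \log|z|_+ - \log|z|$ and $G_{\BB C}(1,z) = \log|z|_+ - \log|z-1|$ (from~\eqref{eqn-cov-kernel}) shows that the deterministic shift appearing in the definition of $h_{\op L}$ minus the deterministic shift appearing in $\wt h_*$ equals $(3\gamma - 2Q)\log|\cdot|_+$, and minus the shift appearing in $\wt h$ equals $(3\gamma - 2Q)\log|\cdot|_+ - \gamma\log|\cdot-1|$. On $B_R(0)$, the first expression is smooth, bounded, and has finite Dirichlet energy by a quick polar-coordinate integration. On $B_R(0)\setminus\ol{B_{1/R}(1)}$, the second expression is likewise smooth, bounded, and of finite Dirichlet energy, since the singularity of $\log|\cdot-1|$ at $z=1$ has been excised.

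Next, I would apply Cameron--Martin for the whole-plane GFF modulo additive constant: if two random distributions are each given by the GFF modulo additive constant plus a deterministic shift, and the difference of the shifts has finite Dirichlet energy on an open set $U$, then their laws, restricted to $U$ and viewed modulo additive constant, are mutually absolutely continuous. Applying this to $h_{\op L}$ versus $\wt h_*$ on $B_R(0)$, and to $h_{\op L}$ versus $\wt h$ on $B_R(0)\setminus\ol{B_{1/R}(1)}$, yields absolute continuity of those pairs modulo additive constant.

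Finally, I would transfer the comparison to the quantum sphere field $h$. The field $\tilde h_{\op L} = h_{\op L} - \frac{1}{\gamma}\log\mu_{h_{\op L}}(\BB C)$ differs from $h_{\op L}$ only by a random additive constant, so modulo additive constant their laws coincide. The law of $h$ is in turn obtained from that of $\tilde h_{\op L}$ by reweighting with $C(\gamma)\,\mu_{h_{\op L}}(\BB C)^{4/\gamma^2 - 2}$, which is a.s.\ strictly positive and finite, making $h$ and $\tilde h_{\op L}$ mutually absolutely continuous. Chaining the three absolute continuity relations produces the lemma. The main point requiring care is the ``modulo additive constant'' formulation of Cameron--Martin; one clean way to handle it is to fix a normalization (say, subtracting the circle average on $\bdy B_1(0)$), apply the classical Cameron--Martin there, and then verify that the resulting absolute continuity descends to the modulo-constant formulation. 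The remaining steps are routine.
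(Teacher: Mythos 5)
Your proposal is correct and follows essentially the same route as the paper: write $h_{\op L}$ and the comparison fields in terms of the same whole-plane GFF plus deterministic shifts, observe that the shift differences have locally finite Dirichlet energy, apply the local Cameron--Martin / absolute-continuity criterion for the GFF modulo additive constant, and finally transfer from $h_{\op L}$ to $h$ via the Radon--Nikodym description in Definition~\ref{def-sphere}. The only structural difference is that you compare $h_{\op L}$ directly with both $\wt h_*$ and $\wt h$, whereas the paper compares $h_{\op L}$ with $\wt h_*$ and then bootstraps from $\wt h_*$ to $\wt h$ on the smaller domain; these are interchangeable. One small inaccuracy: you describe the shift differences as ``smooth,'' but $(3\gamma-2Q)\log|\cdot|_+$ has a gradient discontinuity on the unit circle, so it is only continuous (as the paper says). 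This is harmless because the relevant hypothesis for the Cameron--Martin step is locally finite Dirichlet energy, which your polar-coordinate computation verifies, but the wording should be adjusted.
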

\begin{proof}
Let $h_{\op{L}}$ be the Liouville field from \eqref{eqn-sphere-gaussian}, defined using the same whole-plane GFF $h^{\BB C}$ as in~\eqref{eqn-sphere-abs-cont}. We first look at the case of $\wt h_*$. From the definition of $G_{\BB C}$ in~\eqref{eqn-cov-kernel}, we see that the difference $f := \wt h_* - h_{\op{L}}$ is a deterministic continuous function on $\BB C$ with locally finite Dirichlet energy, i.e., $\int_{B_R(0)} |\nabla f(z)|^2 \,d^2z < \infty$ for each $R>0$. From standard local absolute continuity results for the GFF (see, e.g.,~\cite[Proposition 2.9]{ig4}), the laws of $\wt h_*|_{B_R(0)}$ and $h_{\op{L}}|_{B_R(0)}$, viewed as distributions modulo additive constant, are mutually absolutely continuous for each $R>0$. By Definition~\ref{def-sphere}, the laws of $h$ and $h_{\op{L}}$ are mutually absolutely continuous, viewed as distributions modulo additive constant. This gives the statement for $\wt h_*$. The statement for $\wt h$ follows from the statement for $\wt h_*$ since $\gamma\log|\cdot-1|$ is smooth away from 1, so again by~\cite[Proposition 2.9]{ig4}, the fields $\wt h|_{B_R(0) \setminus \ol{B_{1/R}(1)}}$ and $\wt h_*|_{B_R(0) \setminus \ol{B_{1/R}(1)}}$, viewed as distributions modulo additive constant, are mutually absolutely continuous. 
\end{proof}

We now make two remarks which are not needed for the proofs of our main results, but which are relevant to the physics heuristics in Section~\ref{sec-meander-conjecture}.

\begin{remark}[Infinite measure on quantum spheres] \label{remark-infinite}
There is a natural infinite measure on (free area) quantum spheres which appears in~\cite{wedges,dkrv-lqg-sphere}. 
To produce a ``sample'' from the infinite measure on doubly marked (free area) quantum spheres, we first sample an area $A$ from the infinite measure $\BB 1_{(a>0)}a^{-\frac{4}{\gamma^2} } \, da$. 
We then consider the quantum surface $(\BB C , h + \frac{1}{\gamma} \log A , \infty , 0 )$, where $(\BB C , h , \infty , 0)$ is a doubly marked unit area quantum sphere~\cite[Lemma 2.8]{acsw-loop}. The infinite measure on singly marked (free area) quantum spheres is defined similarly, except that $A$ is instead sampled from $\BB 1_{(a>0)}a^{-\frac{4}{\gamma^2}-1} \, da$.  
\end{remark}

\begin{remark}[Physics interpretation and random planar maps connection] \label{remark-lqg-physics}
A $\gamma$-LQG surface can be interpreted as a model of gravity (essentially, random geometry) coupled to matter, where the matter is represented by a conformal field theory (CFT). The central charge of this CFT is related to the coupling constant $\gamma\in(0,2)$ by
\eqb \label{eqn:lqg-ccM}
\ccM = 25 - 6 \left( \frac{2}{\gamma} + \frac{\gamma}{2} \right)^2 \in (-\infty,1) . 
\eqe
This interpretation can be made rigorous by looking at random planar maps. Indeed, suppose that $(M_n , S_n)$ is a random planar map with $n$ edges decorated by a statistical physics model. We sample $M_n$ and $S_n$ simultaneously, so the marginal law of $M_n$ is given by the uniform measure on some collection of planar maps weighted by the partition function of $S_n$. Suppose our statistical physics model $S_n$ is such that the scaling limit of the version of $S_n$ in a regular lattice (e.g., $\BB Z^2$) is described by a CFT with central charge $\ccM$. Then in many cases the scaling limit of $M_n$ (e.g., with respect to the Gromov-Hausdorff distance) should be described by $\gamma$-LQG, where $\gamma$ and $\ccM$ are related as in~\eqref{eqn:lqg-ccM}. As an example, the scaling limit of the critical Ising model on $\BB Z^2$ is described by a CFT with central charge $\ccM = 1/2$, so random planar maps weighted by the critical Ising model partition function should converge to LQG with matter central charge $\ccM = 1/2$, equivalently, $\gamma=\sqrt3$. See~\cite[Section 3.1]{ghs-mating-survey} for more discussion. Using the formula for the law of the area for the infinite measure on single marked quantum spheres (Remark~\ref{remark-infinite}), it is conjectured that the partition function of $(M_n,S_n)$ grows like $A^n n^{-\alpha}$ where $A>0$ and
\begin{equation}\label{eq:exp}
\alpha=1+\frac{4}{\gamma^2}.
\end{equation}	
This explains why Conjecture~\ref{conj-meander} is related to the meander exponent $\alpha = (29 + \sqrt{145})/12$ in Section~\ref{sec-meander}; see Section~\ref{sec-meander-conjecture} for more details. 
\end{remark}

\subsection{Space-filling SLE} 
\label{sec-sle}

Let $\kappa > 4$. The \textbf{whole-plane space-filling SLE$_\kappa$} from $\infty$ to $\infty$ is a random non-self-crossing space-filling curve $\eta$ in $\BB C$ which starts and ends at $\infty$. We view $\eta$ as a curve defined modulo monotone increasing time re-parametrization. We give a precise definition of this curve below. Before we do so, we record some of its qualitative properties which follow from the construction in~\cite{ig4}. 
\begin{enumerate}[$(i)$]
\item When $\kappa\geq 8$, the curve $\eta$ is a two-sided version of chordal SLE$_\kappa$. It can be obtained from chordal SLE$_\kappa$ by stopping the chordal SLE curve at the first time when it hits some fixed interior point $z$, then ``zooming in'' near $z$. In particular, for any two times $s < t$ the region $\eta([s,t])$ is simply connected.
\item When $\kappa\in (4,8)$, the curve $\eta$ can be obtained from a two-sided variant of chordal SLE$_\kappa$ by iteratively ``filling in'' the bubbles which it disconnects from its target point. In this case, for typical times $s < t$, the region $\eta([s,t])$ is not simply connected. 
\item For each fixed $z\in\BB C$, the left and right outer boundaries of $\eta$ stopped when it hits $z$ are a pair of coupled SLE$_{16/\kappa}$-type curves from $z$ to $\infty$.
\item For any times $a < b$, the set $\eta([a,b])$ has non-empty interior. Moreover, if $\mu$ is a locally finite, non-atomic measure on $\BB C$ which assigns positive mass to every open set, then $\eta$ is continuous if we parametrize $\eta$ so that $\mu(\eta([a,b])) = b-a$ for each $a  <b$. 
\item The law of $\eta$ is invariant under scaling, translation, rotation, and time reversal, i.e., $a\eta + b$ and $\eta( - \cdot)$ have the same law as $\eta$ modulo time parametrization for each $a \in\BB C \setminus \{0\}$ and each $b\in\BB C$. 
\item For each fixed $z\in\BB C$, a.s.\ $\eta$ hits $z$ exactly once. The maximum number of times that $\eta$ hits any $z\in\BB C$ is 3 if $\kappa\geq 8$ and is a finite, $\kappa$-dependent number if $\kappa \in (4,8)$~\cite[Theorem 6.3]{ghm-kpz}.
\end{enumerate}

We now give a formal definition of whole-plane space-filling SLE$_\kappa$, following~\cite[Section 1.2.3]{ig4} (see also~\cite[Section 1.4.1]{wedges}). The idea of the definition is based on SLE duality, which says that for $\kappa > 4$ the outer boundary of an SLE$_\kappa$ curve stopped at a given time should consist of one or more SLE$_{16/\kappa}$-type curves~\cite{zhan-duality1,zhan-duality2,dubedat-duality,ig1,ig4}. Let
\eqbn
\chi := \frac{\sqrt\kappa}{2} - \frac{2}{\sqrt\kappa} .
\eqen
Let $\wh h$ be a whole-plane GFF viewed modulo additive multiples of $2\pi\chi$, as in~\cite{ig4}. 
Also fix an angle $\theta \in [-\pi/2, \pi/2]$. We will define a whole-plane space-filling SLE$_\kappa$ curve $\eta$ from $\infty$ to $\infty$ coupled with $\wh h$. {The law of $\eta$ will not depend on $\theta$, but its coupling with $\wh h$ will depend on $\theta$.}

For $z\in\BB Q^2$, let $\beta_z^L$ and $\beta_z^R$ be the flow lines of $e^{i\wh h/\chi}$ started from $z$, in the sense of~\cite[Theorem 1.1]{ig4}, with angles\footnote{{We measure angles in counter-clockwise order, where zero angle corresponds to the north direction.}} $\theta$ and $\theta-\pi$, respectively. Each of these flow lines is a whole-plane SLE$_{16/\kappa}(2-16/\kappa )$ curve from $z$ to $\infty$ (whole-plane SLE$_{16/\kappa}(\rho)$ for $\rho > -2$ is a variant of whole-plane SLE introduced in~\cite[Section 2.1]{ig4}), and the field $\wh h$ defines a coupling between them. These flow lines will be the left and right outer boundaries of the space-filling SLE$_\kappa$ $\eta$ stopped when it hits $z$. 

By~\cite[Theorem 1.9]{ig4}, for distinct $z,w\in\BB Q^2$, the flow lines $\beta_z^L$ and $\beta_z^R$ cannot cross $\beta_w^L \cup \beta_w^R$, but they may hit and bounce off if $\kappa \in (4,8)$. Furthermore, a.s.\ the flow lines $\beta_z^L$ and $\beta_w^L$ (resp.\ $\beta_z^R$ and $\beta_w^R$) eventually merge into each other. See Figure~\ref{fig-flow-line-def} for an illustration.

\begin{figure}[ht!]
\begin{center}
\includegraphics[width=.4\textwidth]{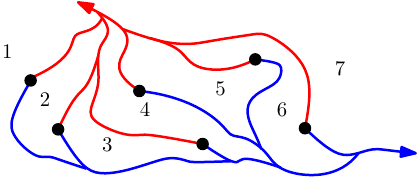}  
\caption{\label{fig-flow-line-def} The flow lines $\beta_z^L$ (in red) and $\beta_z^R$ (in blue) for several points $z\in\BB C$. For each $z$, these flow lines are the left and right outer boundaries, respectively, of $\eta$ stopped when it hits $z$. The figure shows the case when $\kappa\geq 8$. When $\kappa\in (4,8)$, the flow lines started from the same point can hit each other and bounce off (c.f.\ Figure~\ref{fig-space-filling-def}). The space-filling SLE$_\kappa$ fills in the regions 1,2,3,4,5,6, and 7 in this order.
}
\end{center}
\vspace{-3ex}
\end{figure}

We define a total ordering on $\BB Q^2$ by declaring that $z$ comes before $w$ if and only if $w$ lies in a connected component of $\BB C\setminus (\beta_z^L\cup \beta_z^R)$ which lies to the right of $\beta_z^L$ and to the left of $\beta_z^R$. It follows from~\cite[Theorem 1.16]{ig4} (see also~\cite[Footnote 4]{wedges}) that there is a unique space-filling path $\eta$ from $\infty$ to $\infty$ which hits the points of $\BB Q^2$ in the prescribed order and is continuous when it is parametrized, e.g., so that it traverses one unit of (two-dimensional) Lebesgue measure in one unit of time.  This curve $\eta$ is defined to be the \textbf{space-filling SLE$_\kappa$ counterflow line of $\wh h$ from $\infty$ to $\infty$ of angle\footnote{We highlight that in this paper we are using the convention that counterflow line of angle $\theta$ have the \emph{same} direction as flow line of angle $\theta$. We point out that in other papers (for instance, \cite{ig1}) the authors might use the opposite convention, that is, counterflow line of angle $\theta$ have the \emph{opposite} direction as flow line of angle $\theta$.} $\theta-\pi/2$}.
 
The law of the curve $\eta$ does not depend on $\theta$, but the coupling of $\eta$ with $\wh h$ depends on $\theta$. In particular, if $\theta \in (-\pi/2,\pi/2)$ and $\eta_1$ and $\eta_2$ are the space-filling SLE counterflow lines of $\wh h$ with angles 0 and $ \theta-\pi/2 $, respectively, then $\eta_1$ and $\eta_2$ are coupled together in a non-trivial way. By~\cite[Theorem 1.16]{ig4}, each of $\eta_1$ and $\eta_2$ is a.s.\ given by a measurable function of the other. The reason for our interest in this coupling is the following result, which is~\cite[Theorem 1.17]{borga-skew-permuton} and which is proven building on~\cite{wedges,ghs-bipolar}.

\begin{thm}[\cite{borga-skew-permuton}] \label{thm-skew-permuton}
Fix $\rho\in(-1,1)$ and $q\in [0,1]$. Also let $\gamma \in (0,2)$ and $\kappa>4$ satisfy $\rho = -\cos(\pi\gamma^2/4)$ and $\kappa=16/\gamma^2$. There exists $\theta  = \theta(\rho,q) \in [-\pi/2,\pi/2]$ such that the following is true. Let $(\BB C ,h,\infty)$ be a singly marked unit area $\gamma$-Liouville quantum gravity sphere. Also let $\eta_1,\eta_2$ be the space-filling SLE$_\kappa$ counterflow lines of angles 0 and $ \theta-\pi/2 $ of a common whole-plane GFF {$\wh h$ as above. We take $\wh h$ (and hence $(\eta_1,\eta_2)$, viewed modulo time parametrization) to be independent from $h$.} We then parametrize $\eta_1$ and $\eta_2$ so that they each traverse one unit of $\gamma$-LQG mass in one unit of time, {i.e., $\mu_h(\eta_i([0,t])) = t$ for each $t \in [0,1]$ and $i\in \{1,2\}$}. Then the permutation $\perm$ associated with $(\eta_1,\eta_2)$ as in~\eqref{eqn-permuton-def} coincides with the skew Brownian permuton with parameters $\rho$ and $q$.
\end{thm}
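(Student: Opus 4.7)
The plan is to reduce the statement to the mating-of-trees framework and the imaginary geometry coupling of two space-filling SLEs at an angle, then compare the resulting description to the SDE definition of the skew Brownian permuton from \cite{borga-skew-permuton}. The key ingredient on the SLE/LQG side is the mating-of-trees theorem of Duplantier--Miller--Sheffield: if $(\BB C, h, \infty)$ is a singly marked unit area $\gamma$-LQG sphere and $\eta_1$ is a whole-plane space-filling SLE$_{\kappa = 16/\gamma^2}$ parametrized by $\mu_h$-mass, then the pair $(L_t, R_t)_{t \in [0,1]}$ recording the left/right quantum boundary lengths of $\eta_1([0,t])$ is a $2$-d Brownian excursion from $0$ to $0$ of duration $1$ with correlation $\rho = -\cos(\pi \gamma^2/4)$, and this pair almost surely determines $(h, \eta_1)$ modulo rotation. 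First I would apply this to the angle-$0$ curve $\eta_1$ to obtain the correlated Brownian excursion pair $(L, R)$ in which the LQG data is encoded.

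Next I would bring in the second curve $\eta_2$ via imaginary geometry. Since $\eta_1$ and $\eta_2$ are space-filling counterflow lines of the same GFF $\wh h$ (independent of $h$) at angles $0$ and $\pi/2 - \theta$, the relative order in which $\eta_2$ visits the ``cells'' traced out by $\eta_1$ is a measurable function of $\wh h$ (hence of $(L,R)$) together with the angle $\theta$. The goal is to give a concrete SDE description of the time-change $\psi$ defined by $\eta_1(t) = \eta_2(\psi(t))$. At each point $\eta_1(t)$, the future trajectory of $\eta_2$ chooses to fill either the region to the left of $\eta_1$ (where $L$ is the relevant driver) or to the right (where $R$ is the driver), with a local bias determined by $\theta$. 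This is precisely the mechanism that produces a \emph{skew} Brownian motion in the $\eta_1$-time coordinate, with a skewness parameter $q \in (0,1)$ that depends monotonically on $\theta$. The map $\theta \mapsto q(\theta)$ then covers $[-\pi/2, \pi/2] \to [0,1]$, giving the function $\theta(\rho, q)$ claimed in the theorem.

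To identify $\psi$ with the skew Brownian permuton construction, I would recall that the skew Brownian permuton with parameters $(\rho, q)$ is built from the pair of correlated Brownian excursions $(X, Y)$ of correlation $\rho$ by producing, for each $t$, a point $\psi_{\mathrm{sk}}(t)$ via a skew Brownian flow of parameter $q$ driven by $(X,Y)$, and then defining the permuton exactly as in \eqref{eqn-permuton-def}. Setting $(X, Y) = (L, R)$ and matching the two characterizations of $\psi$ step by step reduces to verifying that (i) the law of the driving pair agrees (immediate from mating-of-trees), and (ii) the local rule by which $\eta_2$ chooses left/right at each $\eta_1$-cell agrees with the skew-flow rule, in the scaling limit sense used to define the permuton. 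This is the contents of \cite[Theorem 1.17]{borga-skew-permuton}; the identification works both in the case $\kappa \ge 8$, where $\eta_1$-cells are simply connected, and in the more delicate case $\kappa \in (4,8)$, where one must first pass to the bubbles filled iteratively by $\eta$.

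The main obstacle is step (ii): transferring the ``left/right'' decision of $\eta_2$ to a single real-valued parameter $q$ and identifying $q(\theta)$ with the skew Brownian flow parameter. One must show that, conditional on $(L,R)$, the finite-dimensional distributions of $\psi$ evaluated at any collection of points sampled from Lebesgue measure coincide with those arising from the skew Brownian flow; this in turn rests on a continuity and uniqueness statement for the skew Brownian SDE driven by a correlated Brownian pair, combined with the fact (from \cite{ig4}) that $\eta_2$ is a measurable function of $\wh h$. Once the finite-dimensional matching is in place, the equality of the two permutons follows because, by definition, each permuton is determined by its evaluations on products of intervals, which are in turn controlled by finitely many such $\psi$ values.
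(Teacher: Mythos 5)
This theorem is not proved in the paper under review: it is stated verbatim as a citation to \cite[Theorem~1.17]{borga-skew-permuton}, so there is no in-paper argument to compare your sketch against. Evaluating your outline on its own merits and against what is actually done in \cite{borga-skew-permuton}, the high-level plan is in the right ballpark --- one does indeed go through the mating-of-trees encoding of $(h,\eta_1)$ by correlated Brownian excursions $(L,R)$ with correlation $-\cos(\pi\gamma^2/4)=\rho$, and one does compare the continuum re-ordering map $\psi$ to the SDE/flow definition of the skew Brownian permuton --- but the proposal leaves a genuine gap precisely at the step you yourself flag as the main obstacle.

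Concretely, the assertion that ``at each point $\eta_1(t)$, the future trajectory of $\eta_2$ chooses to fill either the region to the left of $\eta_1$ (where $L$ is the relevant driver) or to the right (where $R$ is the driver), with a local bias determined by $\theta$'' is a heuristic picture, not a derivation. The second counterflow line is not driven by $(L,R)$ in any direct sense, and passing from ``$\eta_2$ is a measurable function of $\wh h$'' to ``the $\eta_1$-time reparametrization $\psi$ solves a skew Brownian SDE with skewness $q(\theta)$'' is the entire content of the theorem. In \cite{borga-skew-permuton} this step is not done by bare continuum considerations: it leans on the detailed bipolar-orientation/Baxter-permutation scaling-limit machinery of \cite{ghs-bipolar,bm-baxter-permutation} to identify the relevant functional of $(L,R)$ (a local-time-type quantity attached to coalescence events) and to show it has the law of a skew Brownian flow, and existence and uniqueness of that flow for a fixed $q$ is itself nontrivial. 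Your sketch acknowledges this but offers no concrete mechanism for producing the SDE; ``matching the two characterizations of $\psi$ step by step'' and ``a continuity and uniqueness statement for the skew Brownian SDE'' are placeholders rather than arguments. A secondary, smaller issue: the continuous, monotone dependence $q\mapsto\theta(\rho,q)$ that you invoke to ``cover'' $[-\pi/2,\pi/2]$ is itself a nontrivial output of the argument (recorded in \cite[Remark~1.18]{borga-skew-permuton}), not an input you can assume; the exact correspondence between $\theta$ and $q$ is in fact still unknown.
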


The exact correspondence between the parameters $\theta$ and $q$ is not known, but it is known that $\theta(\rho,1/2) = 0$ and that $q\mapsto \theta(\rho,q)$ is a homeomorphism from $[0,1]$ to $[-\pi/2,\pi/2]$~\cite[Remark 1.18]{borga-skew-permuton}.

\subsection{Basic properties of permutons constructed from SLEs and LQG}
\label{sec-permuton-basic}

Assume that we are in the setting of Section~\ref{sec-permuton-def}. In particular, $(\BB C  , h , \infty)$ is a singly marked unit area $\gamma$-Liouville quantum sphere and
$(\eta_1,\eta_2)$ is a coupling of a space-filling SLE$_{\kappa_1}$ curve and a space-filling SLE$_{\kappa_2}$ curve, sampled independently from $h$ and then parametrized by $\mu_h$-mass; $\psi : [0,1]\rta [0,1]$ satisfies $\eta_1(t) = \eta_2(\psi(t))$; and $\perm$ is the permuton as in~\eqref{eqn-permuton-def}. In this subsection, we will establish some basic properties of $\perm$. We first check that $\perm$ is well-defined, and give an equivalent definition. 

\begin{lem} \label{lem-permuton-defined}
Almost surely, the permuton $\perm$ is well-defined, i.e., the definition does not depend on the choice of $\psi$. Moreover, a.s.\ for each rectangle $[a,b]\times[c,d] \subset [0,1]^2$, 
\eqb \label{eqn-permuton-formula} 
\perm\left([a,b]\times[c,d]\right) = \mu_h\left( \eta_1([a,b]) \cap \eta_2([c,d]) \right)  .
\eqe
\end{lem}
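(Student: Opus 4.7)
The plan hinges on the single-hit property stated in Section~\ref{sec-permuton-def}: $\mu_h$-almost every point of $\BB C$ is hit exactly once by each of $\eta_1$ and $\eta_2$. Combined with the parametrization condition $\mu_h(\eta_i([0,t])) = t$, this says that $\eta_i$ pushes Lebesgue measure on $[0,1]$ forward to $\mu_h$ (i.e., $\op{Leb}(\eta_i^{-1}(A)) = \mu_h(A)$ for Borel $A \subset \BB C$). Both conclusions of the lemma will then follow from routine pushforward bookkeeping.

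\textbf{Well-definedness.} I would first let $B \subset [0,1]$ be the set of $t$ for which $\eta_1(t)$ is a multi-hit point of $\eta_1$ or of $\eta_2$. Since $\eta_1(B)$ lies in a $\mu_h$-null set, the pushforward identity yields
\[
\op{Leb}(B) \leq \op{Leb}(\eta_1^{-1}(\eta_1(B))) = \mu_h(\eta_1(B)) = 0.
\]
For $t \notin B$ the equation $\eta_1(t) = \eta_2(s)$ has a unique solution $s$, so any two measurable functions $\psi$ satisfying~\eqref{eqn-psi-property} agree off the Lebesgue-null set $B$. Since Lebesgue-a.e.\ agreement of $\psi$ forces equality of the measure defined by~\eqref{eqn-permuton-def}, the random measure $\perm$ is well-defined.

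\textbf{Rectangle formula.} Fix a rectangle $R = [a,b] \times [c,d]$ and set $E := \eta_1([a,b]) \cap \eta_2([c,d])$. For $t \in [a,b] \setminus B$ the uniqueness of $\psi(t)$ established above gives the chain of equivalences
\[
\psi(t) \in [c,d] \;\Longleftrightarrow\; \eta_1(t) = \eta_2(\psi(t)) \in \eta_2([c,d]) \;\Longleftrightarrow\; \eta_1(t) \in E,
\]
where the reverse of the first equivalence uses that $\eta_1(t)$ has a unique $\eta_2$-preimage. Since $B$ is Lebesgue-null this yields
\[
\perm(R) = \op{Leb}\bigl(\{t \in [a,b] : \eta_1(t) \in E\}\bigr) = \op{Leb}\bigl(\eta_1^{-1}(E) \cap [a,b]\bigr).
\]
Modulo a further $\mu_h$-null set, every $z \in E$ has a unique $\eta_1$-preimage, which automatically lies in $[a,b]$ since $z \in \eta_1([a,b])$. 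Hence $\eta_1^{-1}(E) \subset [a,b]$ up to a Lebesgue-null set, and applying the pushforward identity one more time gives
\[
\op{Leb}\bigl(\eta_1^{-1}(E) \cap [a,b]\bigr) = \op{Leb}(\eta_1^{-1}(E)) = \mu_h(E),
\]
which is~\eqref{eqn-permuton-formula}.

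The only mildly subtle point is the bookkeeping around the multi-hit set: one must verify that the step $\eta_1^{-1}(E) \cap [a,b] = \eta_1^{-1}(E)$ costs no Lebesgue mass, which is precisely where the single-hit property is used essentially. Without it, a point of $E$ could have multiple $\eta_1$-preimages, some lying outside $[a,b]$, and the rectangle formula would fail. Everything else is a direct unwinding of the definitions.
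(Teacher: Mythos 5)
Your proof is correct and takes essentially the same route as the paper's: both rest on the facts that $\mu_h$-a.e.\ point is hit exactly once by each curve and that the curves are parametrized by $\mu_h$-mass, then unwind the definitions via the pushforward identity $(\eta_1)_*\op{Leb} = \mu_h$. You spell out the pushforward bookkeeping (in particular the role of multi-hit points of $\eta_1$ in the final step) a bit more explicitly than the paper, which states the last equality $\op{Leb}\{t\in[a,b]:\eta_1(t)\in\eta_2([c,d])\}=\mu_h(\eta_1([a,b])\cap\eta_2([c,d]))$ directly with only the comment that $\eta_1$ is parametrized by $\mu_h$-mass, but the substance is the same.
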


We note that a similar statement to Lemma~\ref{lem-permuton-defined} is proven for the skew Brownian permuton in~\cite[Proposition 1.13]{bhsy-baxter-permuton}.

\begin{proof}[Proof of Lemma~\ref{lem-permuton-defined}]
The formula~\eqref{eqn-permuton-formula} uniquely determines $\perm$ and does not depend on $\psi$, so it suffices to prove~\eqref{eqn-permuton-formula}. 
For each $z\in \BB C$, a.s.\ $z$ is not a multiple point of $\eta_2$, i.e., $\eta_2$ hits $z$ exactly once. 
Since $h$ is independent from $(\eta_1,\eta_2)$ and $\eta_1$ and $\eta_2$ are parametrized by $\mu_h$-mass, a.s.\ the set of times $t\in [0,1]$ such that $\eta_1(t)$ is a multiple point of $\eta_2$ has zero Lebesgue measure. 

If $\eta_1(t)$ is not a multiple point of $\eta_2$, then for any choice of $\psi$ satisfying~\eqref{eqn-psi-property}, $\psi(t) \in [c,d]$ if and only if $\eta_1(t) \in \eta_2([c,d])$. By the preceding paragraph, a.s.\ this is the case for Lebesgue-a.e.\ $t\in [0,1]$, so a.s.\ for every rectangle $[a,b]\times[c,d]$, 
\alb
\perm\left([a,b]\times[c,d]\right)
&= \op{Leb}\left\{ t\in [a,b] : \psi(t) \in [c,d] \right\} \quad \text{(by~\eqref{eqn-permuton-def})} \notag\\
&= \op{Leb}\left\{ t\in [a,b] : \eta_1(t) \in \eta_2([c,d]) \right\}  \notag\\
&= \mu_h\left( \eta_1([a,b]) \cap \eta_2([c,d]) \right) ,
\ale
where the last equality is because $\eta_1$ is parametrized by $\mu_h$-mass. 
\end{proof}

We also have the following basic lemma about the closed support of $\perm$ (Definition~\ref{def-supp}).

\begin{lem} \label{lem-permuton-inclusion} 
Almost surely, for any choice of the function $\psi$ from~\eqref{eqn-psi-property},  
\eqb \label{eqn-permuton-inclusion}
\op{supp} \perm \subset \ol{\{(t,\psi(t)) : t\in [0,1]\} }\subset \{(t,s) \in [0,1] : \eta_1(t) = \eta_2(s) \} = \mcl T .
\eqe 
\end{lem}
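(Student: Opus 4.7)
The plan is to prove the two inclusions separately, and both should be short.

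For the first inclusion $\op{supp}\perm \subset \ol{\{(t,\psi(t)) : t \in [0,1]\}}$, I would simply rewrite \eqref{eqn-permuton-def} as saying that $\perm$ is the pushforward of Lebesgue measure on $[0,1]$ under the map $t \mapsto (t,\psi(t))$. In particular the image $G := \{(t,\psi(t)) : t \in [0,1]\}$ carries all the mass of $\perm$, i.e., $\perm(G) = 1$. Since $\perm(\ol G) = 1$ as well and $\ol G$ is closed, the minimality of the closed support in Definition~\ref{def-supp} gives $\op{supp}\perm \subset \ol G$.

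For the second inclusion $\ol G \subset \mcl T$, the containment $G \subset \mcl T$ is immediate from the defining property \eqref{eqn-psi-property} of $\psi$. It then suffices to check that $\mcl T$ is closed. Here I would invoke property (iv) of space-filling SLE recalled in Section~\ref{sec-sle}: because $\mu_h$ is a.s.\ a locally finite, non-atomic measure on $\BB C$ assigning positive mass to every open set (Sections~\ref{sec-lqg}--\ref{sec-sphere}), the space-filling curves $\eta_1$ and $\eta_2$, each parametrized by $\mu_h$-mass, are a.s.\ continuous maps $[0,1] \to \BB C$. Hence $(t,s) \mapsto \eta_1(t) - \eta_2(s)$ is continuous on $[0,1]^2$, and $\mcl T$ is the preimage of the closed set $\{0\}$, so $\mcl T$ is closed. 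Taking closures in $G \subset \mcl T$ gives $\ol G \subset \mcl T$, finishing the proof.

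There is no real obstacle here: the lemma is essentially a tautology unpacking the definitions in Section~\ref{sec-permuton-def}, once one notes the continuity of the SLE parametrization by LQG mass, which is already recalled in the background section. The only thing worth being careful about is that the statement is uniform in the choice of $\psi$, but since the argument for the first inclusion uses only that $\perm$ is the pushforward under $t\mapsto(t,\psi(t))$ (true for every valid $\psi$) and the argument for the second inclusion uses only \eqref{eqn-psi-property}, uniformity is automatic.
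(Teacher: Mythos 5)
Your argument is correct and matches the paper's proof essentially verbatim: the first inclusion follows from the definition of $\perm$ (as the pushforward under $t\mapsto(t,\psi(t))$) together with minimality of the closed support, and the second from \eqref{eqn-psi-property} combined with the continuity of $\eta_1,\eta_2$, which makes $\mcl T$ closed. No gaps.
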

\begin{proof}
The first inclusion in~\eqref{eqn-permuton-inclusion} is immediate from the definition~\eqref{eqn-permuton-def}. The second inclusion follows from the fact that $\eta_1(t) = \eta_2(\psi(t))$ for each $t\in [0,1]$ by definition; and the fact that the set $\{(t,s) \in [0,1] : \eta_1(s) = \eta_1(t) \}$ is closed (by the continuity of $\eta_1$ and $\eta_2$). 
\end{proof}

Both inclusions in~\eqref{eqn-permuton-inclusion} can potentially be strict. For example, if $\eta_1 = \eta_2$ then $\op{supp} \perm$ is the diagonal in $[0,1]^2$. The third set in~\eqref{eqn-permuton-inclusion} includes off-diagonal points, which arise from pairs of distinct times $(t,s)$ such that $\eta_1(t) = \eta_1(s)$ or $\eta_2(t) = \eta_2(s)$ (which are multiple points of $\eta_1$ or $\eta_2$). The middle set in~\eqref{eqn-permuton-inclusion} can include off-diagonal points or not, depending on the choice of $\psi$. 

The following lemma tells us that the ambiguity in the choice of $\psi$ can arise only from multiple points of $\eta_2$. 
 
\begin{lem} \label{lem-permuton-unique}
Almost surely, for any choice of the function $\psi$ from~\eqref{eqn-psi-property}, the following is true. For each time $t\in [0,1]$ such that $\eta_1(t)$ is hit only once by $\eta_2 $, we have that $\psi(t)$ is the unique $s\in [0,1]$ such that $(t,s) \in \op{supp} \perm$. 
Furthermore, for each fixed $t\in [0,1]$, a.s.\ $\psi(t)$ is the unique $s\in [0,1]$ such that $(t,s) \in \op{supp} \perm$. 
\end{lem}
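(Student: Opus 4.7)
The plan is to reduce both assertions to a single observation: whenever $\eta_1(t)$ is visited exactly once by $\eta_2$, the (a priori arbitrary) function $\psi$ is continuous at $t$, and this together with Lemma~\ref{lem-permuton-inclusion} pins down $(\op{supp}\perm)\cap(\{t\}\times[0,1])=\{(t,\psi(t))\}$.

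For the first assertion, fix such a $t$. Uniqueness is immediate: if $(t,s)\in\op{supp}\perm$, then by Lemma~\ref{lem-permuton-inclusion}, $(t,s)\in\mcl T$, so $\eta_2(s)=\eta_1(t)$, and the unique-hit hypothesis forces $s=\psi(t)$. For existence of $(t,\psi(t))\in\op{supp}\perm$, I first show continuity of $\psi$ at $t$. Take $u_n\to t$; by continuity of $\eta_1$, $\eta_1(u_n)\to\eta_1(t)$. Any subsequential limit $s^*$ of $(\psi(u_n))$ in the compact set $[0,1]$ satisfies $\eta_2(s^*)=\eta_1(t)$ by continuity of $\eta_2$, and hence $s^*=\psi(t)$ by the unique-hit hypothesis. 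So the full sequence converges, proving continuity of $\psi$ at $t$. Given $\ep>0$, choose $\delta\in(0,\ep]$ so that $|u-t|<\delta$ implies $|\psi(u)-\psi(t)|<\ep$. The identity derived inside the proof of Lemma~\ref{lem-permuton-defined} --- namely $\perm(A\times B)=\op{Leb}\{u\in A:\psi(u)\in B\}$ almost surely for any Borel rectangle $A\times B$ --- then yields
\[
\perm\bigl((t-\delta,t+\delta)\times(\psi(t)-\ep,\psi(t)+\ep)\bigr) \;\geq\; \op{Leb}\bigl((t-\delta,t+\delta)\cap[0,1]\bigr) \;>\; 0,
\]
so $(t,\psi(t))\in\op{supp}\perm$, completing the first assertion.

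For the second assertion, by what has been shown it suffices to check that for each fixed $t\in[0,1]$ almost surely $\eta_1(t)$ is a simple point of $\eta_2$. The key input is property (vi) in Section~\ref{sec-sle}: for any fixed $z\in\BB C$, almost surely a whole-plane space-filling SLE$_{\kappa_2}$ curve visits $z$ exactly once. In the independent-coupling case (e.g.\ the meandric permuton), I condition on the $\sigma$-algebra $\sigma(h,\eta_1)$, under which $z:=\eta_1(t)$ becomes deterministic while $\eta_2$ retains its conditional distribution as a whole-plane space-filling SLE$_{\kappa_2}$ independent of $z$; property (vi) applied conditionally gives $\BB P[\eta_2\text{ visits }z\text{ more than once}\mid h,\eta_1]=0$, and taking expectation yields the claim. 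For the skew Brownian coupling of Theorem~\ref{thm-skew-permuton}, the analogous statement follows from the joint imaginary-geometry description of $(\eta_1,\eta_2)$ together with an absolute-continuity argument that allows property (vi) to be invoked. I expect the main obstacle to lie precisely in this last step: for a completely arbitrary coupling one cannot directly apply property (vi) to a random target point that may not be independent of $\eta_2$, but for each coupling actually relevant in this paper the required independence or absolute continuity is available, and the argument carries through.
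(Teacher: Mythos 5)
Your proof is correct, but the existence step takes a genuinely different route from the paper's. Both you and the paper obtain uniqueness identically via Lemma~\ref{lem-permuton-inclusion}. For existence, however, the paper does not show directly that $(t,\psi(t))\in\op{supp}\perm$; instead it observes that $\perm([a,b]\times[0,1])=b-a>0$ for all $a<b$ (the permuton marginal property), so $\op{supp}\perm$ meets every strip $[a,b]\times[0,1]$, and then a compactness/finite-intersection argument using that $\op{supp}\perm$ is closed shows $\op{supp}\perm$ meets $\{t\}\times[0,1]$ for every $t$; the unique point of that fiber is then forced to be $(t,\psi(t))$ by the already-established uniqueness. Your argument is more constructive: you establish continuity of $\psi$ at $t$ (using the simple-point hypothesis together with the continuity of the $\mu_h$-parametrized curves, property~(iv) of Section~\ref{sec-sle}) and then read off positive $\perm$-mass in every neighborhood of $(t,\psi(t))$ directly from the defining formula~\eqref{eqn-permuton-def}. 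The paper's route is shorter and its existence step works for all $t$ without the simple-point hypothesis; your route has the virtue of exhibiting explicitly where the mass near $(t,\psi(t))$ comes from.

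For the second assertion, the paper simply asserts that for a fixed $t$ the point $\eta_1(t)$ is independent of $\eta_2$ (viewed modulo time parametrization) and then applies property~(vi). You are right to flag this as the delicate step: $\eta_1(t)$ is a function of $(h,\eta_1)$, and in the skew Brownian coupling $\eta_2$ is a deterministic function of $\eta_1$, so the literal independence claim deserves the scrutiny you give it. Your fallback — conditioning on $(h,\eta_1)$ in the independent coupling and invoking an absolute-continuity/conditional argument in the skew Brownian case — is the right kind of repair and matches the intent of the paper's terse remark.
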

\begin{proof}
Since $\perm$ is a permuton, we have $\perm([a,b] \times [0,1]) = b-a$ for each $0\leq a \leq b \leq 1$. From this, we infer that $\op{supp} \perm$ intersects $[a,b] \times [0,1]$ for each $0\leq a  < b \leq 1$. Since $\op{supp} \perm$ is closed, this implies that $\op{supp}\perm$ intersects $\{t\} \times [0,1]$ for each $t \in [0,1]$. By Lemma~\ref{lem-permuton-inclusion}, if $\eta_2$ hits $\eta_1(t)$ only once, there is only one possible intersection point of $\op{supp} \perm$ with $\{t\}\times [0,1]$, namely $(t,\psi(t))$. This gives the first assertion of the lemma. To deduce the last statement, we note that if $t\in [0,1]$ is fixed, then $\eta_1(t)$ is independent from $\eta_2$ (viewed modulo time parametrization), and hence a.s.\ $\eta_2$ hits $\eta_1(t)$ only once (recall property (vi) at the beginning of Section \ref{sec-sle}).
\end{proof}

\section{Longest increasing subsequence is sublinear}
\label{sec-lis-proof}

In this section we will prove Theorem~\ref{thm-lis}, which asserts that the longest increasing subsequence is sublinear for permutations which converge to $\perm$, {where $\perm$ is a permuton constructed from SLE and LQG which satisfies one of the two conditions in Section~\ref{sec-lis}.}

\subsection{Longest increasing subsequences and monotone sets for permutons}

To prove Theorem~\ref{thm-lis}, we will use a general criterion on a permuton which implies a sublinear bound on the longest increasing subsequence for converging permutations, which we now state and prove.

\begin{figure}[ht!]
 \begin{center}
\includegraphics[scale=.65]{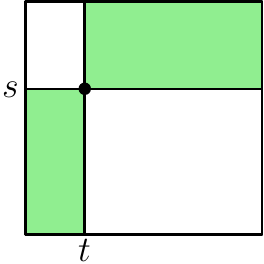}
\vspace{-0.01\textheight}
\caption{A set $A\subset [0,1]^2$ is monotone if for any point $(t,s) \in A$, the set $A$ is contained in the union of the green rectangles in the figure.
}\label{fig-monotone-set}
\end{center}
\vspace{-1em}
\end{figure}

\begin{defn} \label{def-monotone-set}
We say that a set $A\subset [0,1]^2$ is \textbf{monotone} if  
\eqb
A\subset \left( [0,t] \times [0,s]\right) \cup \left( [t,1] \times [s,1] \right) ,\quad \forall (t,s) \in A .
\eqe
\end{defn}

See Figure~\ref{fig-monotone-set} for an illustration. An example of a monotone set is the graph of a non-decreasing function $[0,1] \rta [0,1]$, or a subset of such a graph. The reason for our interest in monotone sets is the following deterministic result.  

\begin{prop} \label{prop-monotone-set}
Let $\perm$ be a permuton and let $\{\sigma_n\}_{n\in\BB N}$ be a sequence of permutations of size $|\sigma_n| \rta\infty$ whose associated permutons $\perm_{\sigma_n}$ converge weakly to $\perm$. 
Assume that $\perm(A) = 0$ for every monotone set $A\subset [0,1]^2$ (Definition~\ref{def-monotone-set}).
Then the longest increasing subsequence of $\sigma_n$ is sublinear, i.e., $\op{LIS}(\sigma_n) / |\sigma_n| \rta 0$. 
\end{prop}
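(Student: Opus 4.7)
The plan is to argue by contradiction. Suppose $\op{LIS}(\sigma_n)/|\sigma_n|$ does not tend to zero in probability. Passing to a subsequence, we may assume there exists $\epsilon > 0$ such that $\op{LIS}(\sigma_n) \geq \epsilon |\sigma_n|$ for every $n$. For each $n$, let $L_n \subset [1,|\sigma_n|] \cap \BB Z$ be the index set of some longest increasing subsequence of $\sigma_n$, and define the closed set
\[
A_n := \bigcup_{j \in L_n} \left[\tfrac{j-1}{|\sigma_n|}, \tfrac{j}{|\sigma_n|}\right] \times \left[\tfrac{\sigma_n(j)-1}{|\sigma_n|}, \tfrac{\sigma_n(j)}{|\sigma_n|}\right] \subset [0,1]^2.
\]
Because $L_n$ indexes an increasing subsequence, $A_n$ is monotone in the sense of Definition~\ref{def-monotone-set}; and by definition of $\perm_{\sigma_n}$ we have $\perm_{\sigma_n}(A_n) = |L_n|/|\sigma_n| \geq \epsilon$.

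Next I would invoke compactness of the space $\mcl K([0,1]^2)$ of nonempty closed subsets of $[0,1]^2$ under the Hausdorff distance. Along a further subsequence, $A_n$ converges in Hausdorff distance to some closed set $A \subset [0,1]^2$. The key structural step is to show that $A$ is itself monotone. Given $(t,s), (t',s') \in A$, choose approximating points $(t_n,s_n), (t_n', s_n') \in A_n$ converging to them. Monotonicity of $A_n$ forces, for each $n$, either $t_n \leq t_n'$ with $s_n \leq s_n'$, or $t_n \geq t_n'$ with $s_n \geq s_n'$. Passing to a subsequence along which one of the two possibilities occurs for infinitely many $n$ and taking limits yields the corresponding inequality for $(t,s)$ and $(t',s')$. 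Hence $A$ is monotone.

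The final step is to transfer the mass bound to the limit. For $\delta > 0$, write $A^{[\delta]} := \{x \in [0,1]^2 : d(x,A) \leq \delta\}$, which is closed. By Hausdorff convergence, $A_n \subset A^{[\delta]}$ for all sufficiently large $n$, so $\perm_{\sigma_n}(A^{[\delta]}) \geq \epsilon$. Since $\perm_{\sigma_n} \to \perm$ weakly, the closed-set direction of the Portmanteau theorem gives $\perm(A^{[\delta]}) \geq \limsup_n \perm_{\sigma_n}(A^{[\delta]}) \geq \epsilon$. Letting $\delta \downarrow 0$ and using $\bigcap_{\delta > 0} A^{[\delta]} = A$ together with continuity of $\perm$ along decreasing sequences of sets, we obtain $\perm(A) \geq \epsilon > 0$, contradicting the hypothesis that $\perm$ assigns zero mass to every monotone set. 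Convergence in probability, rather than almost surely, plays no role in this deterministic statement; the analogous probability statement in Theorem~\ref{thm-lis} will follow by applying this deterministic result along subsequences.

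The only genuinely delicate point I anticipate is the preservation of monotonicity under Hausdorff limits, which requires the diagonal extraction described above because the ordering between $(t_n,s_n)$ and $(t_n',s_n')$ can in principle oscillate with $n$; the remaining steps (construction of $A_n$, compactness, Portmanteau) are standard.
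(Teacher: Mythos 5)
Your overall strategy is exactly the one the paper uses: build the monotone staircase set $A_n$ from a longest increasing subsequence, pass to a Hausdorff limit $A$, transfer the mass lower bound via Portmanteau, and show $A$ is a monotone set with positive $\perm$-measure. The Portmanteau/decreasing-intersection step is correct as written. However, there is a genuine error at the start of the structural argument: \emph{$A_n$ is not monotone in the sense of Definition~\ref{def-monotone-set}}. For instance, with $\sigma = 1,2$ (so $|\sigma| = 2$, $L = \{1,2\}$), the set $A_n = [0,\tfrac12]^2 \cup [\tfrac12,1]^2$ contains $(\tfrac14,\tfrac14)$, yet the point $(0,\tfrac12)\in A_n$ lies in neither $[0,\tfrac14]\times[0,\tfrac14]$ nor $[\tfrac14,1]\times[\tfrac14,1]$. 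More generally, any interior point $(t,s)$ of a constituent square fails the test, because the square's top-left corner lies above and to the left of $(t,s)$. Consequently the intermediate claim that for arbitrary $(t_n,s_n),(t_n',s_n')\in A_n$ one of the two coordinate orderings must hold is false when the two points sit in the same square, and your step ``monotonicity of $A_n$ forces\ldots'' does not follow as stated.

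The argument is salvageable, and the paper shows the cleanest repair: rather than approximating $(t,s)\in A$ and $(t',s')\in A$ by arbitrary points of $A_n$, approximate $(t,s)$ by the specific \emph{outer corner} $(j_n/|\sigma_n|, \sigma_n(j_n)/|\sigma_n|)$ of the square containing a nearby point of $A_n$. For such corner points one has the \emph{exact} inclusion
\[
A_n \subset \left( \left[0,\tfrac{j_n}{|\sigma_n|}\right]\times\left[0,\tfrac{\sigma_n(j_n)}{|\sigma_n|}\right]\right) \cup \left(\left[\tfrac{j_n}{|\sigma_n|},1\right]\times\left[\tfrac{\sigma_n(j_n)}{|\sigma_n|},1\right]\right),
\]
which uses the monotonicity of $\sigma_n|_{L_n}$ and survives taking Hausdorff limits directly, yielding $A\subset ([0,t]\times[0,s]) \cup ([t,1]\times[s,1])$. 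Alternatively, your own pairwise approach works if you replace ``monotonicity of $A_n$'' by the correct observation that for $(t_n,s_n)$ in square $j$ and $(t_n',s_n')$ in square $j'$ of $A_n$, the strict ordering on coordinates holds whenever $j\ne j'$, while if $j=j'$ then $|t_n-t_n'|, |s_n-s_n'| \le 1/|\sigma_n|\to 0$; either way the required inequality holds in the limit. As you correctly anticipate, the subsequence extraction needed to stabilize the direction of the inequality is straightforward; the delicate point you missed is that $A_n$ itself is not monotone, only monotone ``up to the square mesh.''
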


{The proof of Proposition~\ref{prop-monotone-set} is elementary, but we are not aware of a reference for the statement or proof, so we provide the proof here.}

\begin{proof}[Proof of Proposition~\ref{prop-monotone-set}]
We will prove the contrapositive, i.e., we will assume that the largest increasing subsequence of $\sigma_n$ is not sublinear and show that there is a monotone subset of $[0,1]^2$ with positive $\perm$-mass.

Our assumption implies that after possibly replacing $\{\sigma_n\}_{n\in\BB N}$ by a subsequence, we can find $c > 0$ such that for each $n\in\BB N$, there is an increasing subsequence of $\sigma_n$ of length at least $c | \sigma_n|$. 
That is, for each $n\in\BB N$, there is a set $L_n \subset [1,|\sigma_n|] \cap \BB Z$ such that $\# L_n \geq c |\sigma_n|$ and $\sigma_n|_{L_n}$ is monotone increasing. 
For $n\in\BB N$, let
\eqb \label{eqn-discrete-mono}
A_n := \bigcup_{j\in L_n} \left[ \frac{j - 1}{|\sigma_n|} , \frac{ j }{|\sigma_n|} \right] \times \left[ \frac{\sigma_n(j) - 1}{|\sigma_n|} , \frac{ \sigma_n(j) }{|\sigma_n|} \right] \subset [0,1]^2 .
\eqe
By Definition~\ref{def-permutation} of $\perm_{\sigma_n}$, we have $\perm_{\sigma_n}(A_n) \geq c$. 

By compactness, after possibly passing to a further subsequence of $\{\sigma_n\}_{n\in\BB N}$, we can arrange that $A_n$ converges in the Hausdorff distance to a closed set $A\subset [0,1]^2$. We first argue that $\perm(A) > 0$. Indeed, for each $\ep > 0$, it holds for each large enough $n\in\BB N$ that $A_n$ is contained in the Euclidean $\ep$-neighborhood $B_\ep(A)$. Since $\perm_{\sigma_n} \rta \perm$ weakly,
\eqb
\perm(\ol{B_\ep(A)}) \geq \liminf_{n\rta\infty} \perm_{\sigma_n}(\ol{B_\ep(A)}) \geq \liminf_{n\rta\infty} \perm_{\sigma_n}(A_n) \geq c .
\eqe
Sending $\ep \rta 0$ gives $\perm(A) \geq c > 0$. 

We next claim that $A$ is a monotone set in the sense of Definition~\ref{def-monotone-set}. To this end, let $(t,s) \in A$. Since $A_n\rta A$ in the Hausdorff distance, by~\eqref{eqn-discrete-mono} we can find a $j_n \in L_n$ such that $(j_n,\sigma_n(j_n)) / |\sigma_n| \rta (t,s)$ as $n\rta\infty$. Since $\sigma_n$ is monotone on $L_n$, it holds for each $n\in\BB N$ that
\eqb \label{eqn-perm-rectangle}
A_n\subset 
\left( \left[ 0 , \frac{ j_n }{|\sigma_n|} \right] \times \left[ 0, \frac{ \sigma_n(j_n) }{|\sigma_n|} \right] \right) \cup 
\left( \left[ \frac{ j_n }{|\sigma_n|} , 1 \right] \times \left[  \frac{ \sigma_n(j_n) }{|\sigma_n|}  ,1  \right] \right)  .
\eqe 
Since $(j_n,\sigma_n(j_n)) / |\sigma_n| \rta (t,s)$, for each $\ep > 0$ it holds for each large enough $n\in\BB N$ that the set on the right side of~\eqref{eqn-perm-rectangle} is contained in the $\ep$-neighborhood of $([0,t] \times [0,s]) \cup ([t,1] \times [s,1])$. Since $A_n\rta A$, it follows that for each $\ep > 0$,
\eqb
A \subset \left(\text{$\ep$-neighborhood of $([0,t] \times [0,s]) \cup ([t,1] \times [s,1]) $}\right) .
\eqe
Sending $\ep\rta 0$ shows that $A$ is a monotone set.
\end{proof}

\subsection{Monotone sets for permutons constructed from SLEs and LQG}

Henceforth assume that we are in the setting of Theorem~\ref{thm-lis}. To prove the theorem, we need to check the condition of Proposition~\ref{prop-monotone-set} for the random permuton $\perm$. This turns out to be straightforward using SLE and LQG theory.

Let $\eta_1$ and $\eta_2$ be the coupled or independent space-filling SLE curves of parameters $\kappa_1$ and $\kappa_2$ from Theorem~\ref{thm-lis}. 
For $z\in\BB C$, let $\mcl Q_z$ be the set of $w\in\BB C$ such that $\eta_1$ hits $w$ before it hits $z$ and $\eta_2$ hits $w$ after it hits $z$. More precisely, $w\in \mcl Q_z$ if and only if 
\eqb \label{eqn-non-mono-set}
\exists t_1,s_1,t_2,s_2 \in [0,1] \quad \text{such that} \quad t_1 \leq s_1, \quad t_2 \leq s_2, \quad \eta_1(s_1) = \eta_2(t_2) = z,\quad \eta_1(t_1) = \eta_2(s_2) = w .
\eqe
See Figure~\ref{fig-quadrant} for an illustration of $\mcl Q_z$. 

{We claim that $\mcl Q_z$ is a closed set. Indeed, if $\eta_1$ and $\eta_2$ each hit $z$ only once (say at times $s_1$ and $t_2$, respectively) then $\mcl Q_z = \eta_1([0,s_1]) \cap \eta_2([t_2,1])$ is the intersection of closed sets, so is closed (note that this is the case for $\mu_h$-a.e.\ $z\in\BB C$). In general, each of $\eta_1$ and $\eta_2$ hits $z$ at most finitely many times (recall Section~\ref{sec-sle}), so $\mcl Q_z$ is the union of finitely many closed sets of the above type.}
Furthermore, $\mcl Q_z$ depends only on $\eta_1$ and $\eta_2$ viewed modulo time parametrization, so $\mcl Q_z$ is independent from $h$. 

To prove Theorem~\ref{thm-lis}, we will first show that for $\mu_h$-a.e.\ $z\in\BB C$, the set $\mcl Q_z$ has positive density with respect to $\mu_h$, i.e., we will prove the following lemma.

\begin{figure}[t!]
 \begin{center}
\includegraphics[scale=.85]{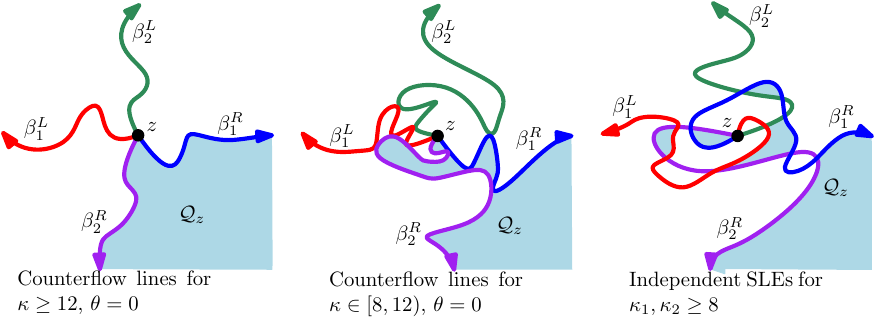}
\vspace{-0.01\textheight}
\caption{Illustration of the set $\mcl Q_z$ of~\eqref{eqn-non-mono-set} for three of the different pairs of SLE curves $(\eta_1,\eta_2)$ considered in Theorem~\ref{thm-lis}. Here, $z$ is a point which is hit only once by each of $\eta_1$ and $\eta_2$.  In each case, $\mcl Q_z$ is the light blue region, $\eta_1$ travels from south to north and $\eta_2$ travels from west to east. The left and right boundaries of $\eta_1$ (resp.\ $\eta_2$) stopped when it hits $z$ are shown in red and blue (resp.\ green and purple). The set $\mcl Q_z$ is the region lying to the south of the union of the red and blue curves and to the east of the union of the green and purple curves. In the case of the skew Brownian permuton, $\eta_1$ and $\eta_2$ are two space-filling counterflow lines of a GFF with angles $0$ and $ \theta-\pi/2 $ (Theorem~\ref{thm-skew-permuton}). This case is illustrated in the left and middle panels. The red, blue, green, and purple curves do not cross each other, but may hit each other and bounce off (as shown in the second panel). One can determine the values of $\kappa$ and $\theta$ for which this happens using~\cite[Proposition 3.28]{ig4} and~\cite[Lemma 15]{dubedat-duality}. In the case of two independent space-filling SLEs (right panel), the red and blue curves can cross the green and purple curves, so the region $\mcl Q_z$ is not connected. The three pictures in the case when $\kappa_1 \in (4,8)$ (resp.\ $\kappa_2 \in (4,8)$) are similar, except that the red and blue curves (resp.\ the green and purple curves) can hit each other. Our proof is the same for all of the different cases considered in Theorem~\ref{thm-lis}. 
}\label{fig-quadrant}
\end{center}
\vspace{-1em}
\end{figure}

\begin{lem} \label{lem-sphere-limsup}
Assume that we are in the setting of Theorem~\ref{thm-lis}.  
Almost surely, 
\eqb \label{eqn-sphere-limsup}
\limsup_{r \rta 0} \frac{\mu_h\left( \mcl Q_z \cap  B_r(z)    \right)}{\mu_h(B_r(z))} > 0 ,\quad \text{for $\mu_h$-a.e.\ $z\in\BB C$} , 
\eqe
where $B_r(z)$ is a Euclidean ball as in~\eqref{eqn-ball}. 
\end{lem}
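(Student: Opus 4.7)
The plan is to reduce the statement to one about a $\gamma$-quantum cone at the origin decorated by the space-filling SLE curves, exploit the exact scaling symmetry of the cone to deduce that $\phi(r) := \mu_h(\mcl Q_0 \cap B_r(0))/\mu_h(B_r(0))$ is stationary in $\log r$, and then conclude via a reverse Fatou argument combined with a.s.\ positivity of $\phi(1)$.

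\textbf{Reduction to a quantum cone.} By Lemma~\ref{lem-sphere-pt}, if we resample so that an independent $\mu_h$-point $z$ becomes a marked point and re-embed $(\BB C,h)$ via a complex affine map to send $z\mapsto 0$ and $\infty\mapsto\infty$, the resulting doubly marked surface is again a unit area quantum sphere. By Lemma~\ref{lem-sphere-abs-cont}, the law of the embedded field restricted to any bounded neighborhood of $0$ is mutually absolutely continuous with that of $h^{\BB C}-\gamma\log|\cdot|$, the standard embedding of a $\gamma$-quantum cone. Since~\eqref{eqn-sphere-limsup} is (by Fubini) equivalent to a statement about a $\mu_h$-typical point, it suffices to prove that in the quantum-cone setup with origin as marked point, $\limsup_{r\to 0}\phi(r)>0$ almost surely.

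\textbf{Scale invariance and positivity at unit scale.} The $\gamma$-quantum cone obeys the exact scaling symmetry $h\eqD h(\lambda\cdot)+Q\log\lambda$ for every $\lambda>0$, and whole-plane space-filling SLE is scale invariant modulo time parametrization. Under the coordinate change $z\mapsto \lambda z$, the $\mu_h$-mass parametrization transforms covariantly via~\eqref{eqn-metric-measure-coord}, so the joint triple $(h,\eta_1,\eta_2)$ is invariant in law under this rescaling; a direct computation then yields $\phi(r)\eqD \phi(1)$ for every $r>0$. To show $\phi(1)>0$ a.s., note that $\mcl Q_0$ is an open region whose boundary is contained in the union of the left- and right-outer-boundary curves of $\eta_1$ and $\eta_2$ stopped at $0$, which are four SLE-type curves from $0$ to $\infty$. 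In every regime covered by Theorem~\ref{thm-lis} these curves emanate from $0$ at four pairwise distinct angles: by independence in the independent case, and because $(\rho,q)\in(-1,1)\times(0,1)$ forces the imaginary-geometry angle $\theta\in(-\pi/2,\pi/2)$ in the coupled case (so that $\eta_1\not\equiv\eta_2$ and the four flow-line angles at $0$ are pairwise distinct). They therefore carve out four non-degenerate sectors near $0$, the one corresponding to $\mcl Q_0$ containing a Euclidean-open subset of $B_1(0)$; since $\mu_h$ puts positive mass on every open set, $\phi(1)>0$ a.s.

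\textbf{From positivity at one scale to the $\limsup$.} Fix $\epsilon>0$ and set $A_k:=\{\phi(2^{-k})>\epsilon\}$. By scale invariance $\BB P[A_k]=\BB P[\phi(1)>\epsilon]$ for every $k$, and $\{A_k\text{ i.o.}\}\subset\{\limsup_{r\to 0}\phi(r)\geq\epsilon\}$. Reverse Fatou for events then yields
\eqbn
\BB P\left[\limsup_{r\to 0}\phi(r)\geq\epsilon\right]\geq \limsup_{k\to\infty}\BB P[A_k] = \BB P[\phi(1)>\epsilon].
\eqen
Letting $\epsilon\downarrow 0$ on both sides gives $\BB P[\limsup_{r\to 0}\phi(r)>0]\geq\BB P[\phi(1)>0]=1$, as desired.

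\textbf{Main obstacle.} The step most likely to need care is the a.s.\ positivity of $\phi(1)$: one must verify rigorously that the four outer-boundary curves leave a genuinely two-dimensional sector at $0$. In the coupled case this uses flow-line interaction results from~\cite{ig4} together with the restriction $\theta\in(-\pi/2,\pi/2)$; in the independent case it relies on the genericity of tangent directions at $0$, which is where the scale invariance of whole-plane SLE plays a role. Everything else in the plan is soft (absolute continuity, scale invariance, reverse Fatou).
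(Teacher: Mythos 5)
Your proposal is essentially the paper's proof of Lemma~\ref{lem-sphere-limsup}: reduce to a scale-invariant infinite-volume field via Lemma~\ref{lem-sphere-pt} (resampling a $\mu_h$-typical marked point) and Lemma~\ref{lem-sphere-abs-cont} (local absolute continuity), use the exact scaling of the field together with scale invariance in law of $(\eta_1,\eta_2)$ to see that the ratio $\phi(r)$ has a law independent of $r$, verify that $\phi(1)>0$ a.s., and conclude via a reverse-Fatou argument in $r$; the paper packages the infinite-volume step as a separate Lemma~\ref{lem-gff-limsup}, working with $\wt h = h^{\BB C} - \gamma\log|\cdot|$ (scale invariant only modulo additive constant, which cancels in the ratio) rather than the quantum cone, but that is a cosmetic difference. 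The one substantive place your sketch is softer than the paper is the positivity of $\phi(1)$: you argue the four boundary curves $\beta_1^L,\beta_1^R,\beta_2^L,\beta_2^R$ ``emanate from $0$ at four pairwise distinct angles,'' but whole-plane SLE$_{16/\kappa}(\rho)$ curves do not have tangent directions at the origin (their law is rotationally invariant). The paper instead observes that each of the four curves is simple (since $16/\kappa_i<4$) and that the pairwise intersection of any two of them is a.s.\ totally disconnected --- by the no-mutual-tracing property in the independent case, and by the imaginary-geometry interaction rules \cite[Proposition 3.28]{ig4} with $\theta\in(-\pi/2,\pi/2)$ in the coupled case --- so that a purely topological argument gives $\mcl Q_0$ nonempty interior. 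You correctly flag this as the step needing care; replacing the heuristic about angles with the totally-disconnected-intersection argument closes the gap.
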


We will eventually show that every monotone set for $\perm$ has zero $\mu_h$-mass by combining Lemma~\ref{lem-sphere-limsup} with the Lebesgue density theorem for the Radon measure $\mu_h$ (see the proof of Proposition~\ref{prop-sle-mono-set}). 
We will prove Lemma~\ref{lem-sphere-limsup} via a scaling argument. To do this, we need to work in an infinite-volume setting, i.e., we need to work with a field which satisfies an exact spatial scale invariance property in law (the unit area quantum sphere does not satisfy such a property). We choose to work with a whole-plane GFF with a $\gamma$-log singularity at 0 since {the local behavior of this field near zero is the same as the local behavior of the quantum sphere field near a point sampled from its LQG area measure $\mu_h$} (see Lemma~\ref{lem-sphere-abs-cont} for a precise statement).

\begin{lem} \label{lem-gff-limsup}
Let $\gamma\in (0,2)$ and $\kappa_1,\kappa_2 > 4$. Let $\wt h$ be a whole-plane GFF plus $\gamma\log|\cdot|^{-1}$. Let $\eta_1$ and $\eta_2$ be two whole-plane space-filling SLE curves from $\infty$ to $\infty$ of parameters $\kappa_1$ and $\kappa_2$, sampled independently from $\wt h$. Assume that $(\eta_1,\eta_2)$ are coupled together in one of the two possible ways listed just before Theorem~\ref{thm-lis}. 
Define the set $\mcl Q_0$ as in~\eqref{eqn-non-mono-set}. 
Almost surely, 
\eqb \label{eqn-gff-limsup}
\limsup_{r \rta 0} \frac{\mu_{\wt h}\left( \mcl Q_0 \cap  B_r(0)    \right)}{\mu_{\wt h}(B_r(0))} > 0 .
\eqe
\end{lem}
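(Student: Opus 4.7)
Write $X_r := \mu_{\wt h}(\mcl Q_0 \cap B_r(0))/\mu_{\wt h}(B_r(0))$. My plan has three parts. First, I show $X_r \eqD X_1$ for every $r>0$ by scaling: the joint law of $(\eta_1,\eta_2)$ modulo time parametrization is scale invariant (in both allowed couplings), so $r^{-1}\mcl Q_0\eqD\mcl Q_0$ jointly with $(r^{-1}\eta_1,r^{-1}\eta_2)\eqD(\eta_1,\eta_2)$. For the field, the GFF scaling identity \eqref{eqn-gff-translate} combined with $\wt h=h^{\BB C}-\gamma\log|\cdot|$ implies that $\wt h^r(\cdot):=\wt h(r\cdot)+Q\log r$ equals $\wt h$ in distribution up to a random additive constant $c_r$. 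By the LQG coordinate change \eqref{eqn-metric-measure-coord}, $\mu_{\wt h^r}(A)=\mu_{\wt h}(rA)$, and rewriting $X_r = \mu_{\wt h^r}(r^{-1}\mcl Q_0\cap B_1(0))/\mu_{\wt h^r}(B_1(0))$ shows that $c_r$ scales numerator and denominator by the same factor $e^{\gamma c_r}$, yielding $X_r\eqD X_1$.

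Next I would show $\BB P[X_1>0]>0$ using the local wedge picture in Figure~\ref{fig-quadrant}. Since $\mu_{\wt h}$ a.s.\ charges every nonempty open subset of $\BB C$, it suffices that $\mcl Q_0\cap B_1(0)$ have nonempty interior with positive probability. The set $K_1:=\{w:\eta_1\text{ hits }w\text{ before }0\}$ is, near $0$, bounded by the left and right outer boundaries of $\eta_1$ stopped at $0$---flow lines of an imaginary-geometry GFF emerging from $0$ in well-defined directions---and analogously for $K_2:=\{w:\eta_2\text{ hits }w\text{ after }0\}$. In both allowed couplings (independent auxiliary imaginary-geometry GFFs; or the common-GFF coupling of Theorem~\ref{thm-skew-permuton} with $q\in(0,1)$, for which the four relevant angles $\pi/2,-\pi/2,\theta,\theta-\pi$ are pairwise distinct), the four flow lines leave $0$ in four distinct directions, so $\mcl Q_0=K_1\cap K_2$ contains an open sector at $0$.

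Combining the two steps, $\BB E[X_1]>0$; since $X_r\leq 1$, the reverse Fatou lemma gives $\BB E[\limsup_{r\to 0}X_r]\geq\BB E[X_1]>0$, hence $\BB P[\limsup_{r\to 0}X_r>0]>0$. The event $\{\limsup_{r\to 0}X_r>0\}$ is invariant under the scaling action $z\mapsto sz$ on $(\wt h,\eta_1,\eta_2)$, and this action is ergodic (standard for the whole-plane GFF, preserved by taking an independent product with the scale-invariant $(\eta_1,\eta_2)$), so the probability is $1$. \textbf{The main obstacle} is the wedge step: for $\kappa\in(4,8)$ the outer boundaries can touch and bounce off each other and are not simple, so one must verify carefully that they still separate a neighborhood of $0$ into the angular sectors shown in Figure~\ref{fig-quadrant}, using the imaginary-geometry technology of~\cite{ig4}.
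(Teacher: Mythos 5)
Your scaling step and the ``wedge/sector'' picture for $\mcl Q_0$ are the same ingredients the paper uses, so the first two-thirds of your argument are on target. However, your concluding step takes a genuinely different route, and it is both harder to justify and, as it turns out, unnecessary.

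The paper's conclusion does not go through ergodicity. It first upgrades ``$\mcl Q_0$ has non-empty interior a.s.'' to ``$\mcl Q_0 \cap B_r(0)$ has non-empty interior a.s.\ for every $r>0$'' by noting that $\BB P[\mcl Q_0 \cap B_r(0) \text{ has non-empty interior}]$ is constant in $r$ (scale invariance of the law of $\mcl Q_0$) and tends to $1$ as $r\to\infty$. Consequently $X_r>0$ a.s.\ for every $r$, and so for each $p<1$ there is $c>0$ with $\BB P[X_r\geq c]\geq p$ uniformly in $r$. A reverse-Fatou argument on the events $\{X_r\geq c\}$ then gives $\BB P[\limsup_{r\to0} X_r \geq c]\geq p$, and $p\to1$ finishes. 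Your own wedge argument already establishes the a.s.\ (not merely positive-probability) statement that $\mcl Q_0$ contains an open sector at $0$, so you have everything needed to run this simpler conclusion; you should use it.

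By contrast, your ergodicity step is a real gap as written. You assert that the scaling action $z\mapsto sz$ on the whole-plane GFF (and on $\wt h=h^{\BB C}-\gamma\log|\cdot|$) modulo additive constant is ergodic and call it ``standard''; this is plausible but is not a citable textbook fact, and would itself require proof. Worse, you claim ergodicity is ``preserved by taking an independent product'' with the scale-invariant $(\eta_1,\eta_2)$. A product of two ergodic systems is \emph{not} in general ergodic; you would need one factor to be weakly mixing, which is an additional (unproven) assertion. So the ergodicity route, as presented, does not close.

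Two smaller remarks. First, your ``main obstacle'' is slightly misstated: for $\kappa_i\in(4,8)$ the left and right outer boundaries of $\eta_i$ stopped at $0$ \emph{are} simple curves (they are SLE$_{16/\kappa_i}$-type with $16/\kappa_i<4$); what is true is that they can intersect \emph{each other}. The paper handles this by observing that the pairwise intersections of the four boundary curves are totally disconnected and then invoking the simplicity of each individual curve together with a topological argument. Second, note that the paper does not actually need the explicit ``open sector emanating from $0$'' picture: once one knows $\mcl Q_0$ has non-empty interior \emph{somewhere}, scale invariance of the law of $\mcl Q_0$ already forces $\mcl Q_0\cap B_r(0)$ to have non-empty interior a.s.\ for every $r$, which sidesteps the delicate local analysis of flow-line angles at $0$.
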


\begin{proof} 
We first claim that a.s.\ $\mcl Q_0$ has non-empty interior.   
For $i\in \{1,2\}$, let $\tau_0^i$ be the (a.s.\ unique) time when $\eta_i$ hits 0.  
By the construction of space-filling SLE in Section~\ref{sec-sle}, the set $\bdy \eta_i((-\infty,\tau_0^i])$ is the union of two SLE$_{16/\kappa_i}$-type curves $\beta_i^L$ and $\beta_i^R$ from 0 to $\infty$, namely, the left and right boundaries of $\eta_i((-\infty,\tau_0^i])$.  
The set $\mcl Q_0$ is the closure of the intersection of the region $\eta_1((-\infty,\tau_0^1])$ which lies to the left of $\beta_1^L$ and to the right of $\beta_1^R$; and the region $\eta_2([\tau_0^2,\infty))$ which lies to the right of $\beta_2^L$ and to the left of $\beta_2^R$. See Figure~\ref{fig-quadrant} for an illustration. 

{
In the case when $\eta_1$ and $\eta_2$ are coupled together as in the SLE/LQG description of the skew Brownian permuton, we recall from Section~\ref{sec-sle} that $\beta_1^L , \beta_1^R,\beta_2^L$, and $\beta_2^R$ are the flow lines of a whole-plane GFF with angles $ \pi/2$, $-\pi/2$, $\theta$, and $\theta-\pi$, respectively. 
It follows from~\cite[Proposition 3.28]{ig4} that a.s.\ $\beta_2^R \subset \eta_1((-\infty,\tau_0^1])$, $\beta_2^R$ does not spend a non-trivial interval of time tracing $\bdy \eta_1((-\infty,\tau_0^1])$, and the intersection of $\beta_2^R$ with the closure of each connected component of the interior of $\eta_1((-\infty,\tau_0^1])$ is a simple curve between two points of the boundary of the component. Each of these closures is homeomorphic to a closed Euclidean disk. By the Jordan curve theorem, $\beta_2^R$ divides each connected component of the interior of $\eta_1((-\infty,\tau_0^1])$ into two non-empty regions. The one of these regions to the left of $\beta_2^R$ is contained in $\mcl Q_0$; hence $\mcl Q_0$ has non-empty interior. 

In the case when $\eta_1$ and $\eta_2$ are independent, modulo time parametrization, we first note that $\beta_2^L$ has a positive chance to cross $\beta_1^L$ (this follows, e.g., from~\cite[Lemma 3.8]{ig4} applied under the conditional law given $\beta_1^L$). By the scale invariance of the joint law of $(\beta_1^L , \beta_2^L)$, we get that there exists $q > 0$ such that for every $\ep > 0$, $\beta_1^L$ and $\beta_2^L$ have probability at least $q$ to cross before leaving $B_\ep(0)$. By a tail triviality argument (using the Brownian motions that determine the Loewner driving functions for $\beta_1^L$ and $\beta_2^L$), we get that in fact $\beta_1^L$ and $\beta_2^L$ a.s.\ cross infinitely many times. Consequently, a.s.\ there exist infinitely many open regions which lie to the left of $\beta_1^L$ and to the right of $\beta_2^L$. Since the ranges of $\beta_1^R$ and $\beta_2^R$ are closed sets of empty interior, it follows that a.s.\ each of these regions contains an open subset of $\mcl Q_0$. This concludes the proof of our claim.
}

\medskip

The joint law of $(\eta_1,\eta_2)$, viewed as curves modulo time parametrization, is invariant under scaling space. Hence the same is true of the law of $\mcl Q_0$. Therefore, a.s.\ $\mcl Q_0 \cap B_r(0)$ has non-empty interior for every $r>0$. 
Since a.s.\ $\mu_{\wt h}$ assigns positive mass to every open set, we get that a.s.\ $\mu_{\wt h}(\mcl Q_0 \cap B_r(0)) > 0$ for every $r > 0$. 

By the LQG coordinate change formula~\eqref{eqn-metric-measure-coord}, a.s.\ for each $r>0$,
\eqb \label{eqn-cone-coord}
\mu_{\wt h(r \cdot ) + Q\log r }(A  ) =   \mu_{\wt h}( r A) ,\quad \text{for each Borel set $A\subset \BB C$.}
\eqe 
Furthermore, by~\eqref{eqn-gff-translate}, the law of $\wt h$ is scale invariant modulo additive constant, i.e., for each $r > 0$ there is a random $C_r > 0$ such that $\wt h(r \cdot ) - C_r \eqD \wt h$. 
Adding a constant $C$ to $\wt h$ results in scaling $\mu_{\wt h}$ by $e^{\gamma C}$, so does not change the ratios of the $\mu_{\wt h}$-masses of any two sets. From this,~\eqref{eqn-cone-coord}, and the scale invariance of the law of $\mcl Q_0$, we get {
\alb
\frac{\mu_{\wt h}\left( \mcl Q_0 \cap  B_r(0)    \right) }{ \mu_{\wt h}(B_r(0)) }
&= \frac{\mu_{\wt h(r\cdot) + Q \log r }\left( (r^{-1}\mcl Q_0) \cap  B_1(0)    \right) }{ \mu_{\wt h(r\cdot) + Q \log r} (B_1(0)) } \qquad \text{(by~\eqref{eqn-cone-coord})} \\
&= \frac{e^{\gamma C_r} r^{\gamma Q} \mu_{\wt h(r\cdot)  -  C_r }\left( (r^{-1}\mcl Q_0) \cap  B_1(0)    \right) }{ e^{\gamma C_r} r^{\gamma Q} \mu_{\wt h(r\cdot) - C_r }(B_1(0)) } \quad \text{(by the scaling property of $\mu_{\wt h}$)} \\
&\eqD \frac{ \mu_{\wt h  }\left( \mcl Q_0 \cap  B_1(0)    \right) }{  \mu_{\wt h  }(B_1(0)) }   \quad \text{(by scale invariance in law)} .
\ale
}

Thus, the law of $\mu_{\wt h}\left( \mcl Q_0 \cap  B_r(0)    \right) / \mu_{\wt h}(B_r(0))$ does not depend on $r$. As we noted above, this random variable is a.s.\ positive. 
Consequently, for each $p\in (0,1)$ there exists $c > 0$ such that 
\eqb
\BB P\left[ \frac{\mu_{\wt h}\left( \mcl Q_0 \cap  B_r(0)    \right)}{\mu_{\wt h}(B_r(0))} \geq c \right] \geq p ,\quad \forall r > 0 .
\eqe
Hence, with probability at least $p$, there exist arbitrarily small values of $r > 0$ for which 
\eqbn
\frac{\mu_{\wt h}\left( \mcl Q_0 \cap  B_r(0)    \right) }{ \mu_{\wt h}(B_r(0)) } \geq  c .
\eqen
Since $p$ is arbitrary, the limsup in~\eqref{eqn-gff-limsup} is a.s.\ positive.
\end{proof}

\begin{proof}[Proof of Lemma~\ref{lem-sphere-limsup}]
Conditional on $(h,\eta_1,\eta_2)$, let $Z  $ be sampled from the probability measure $\mu_h$. It suffices to show that a.s.\ 
\eqb \label{eqn-limsup-uniform}
\limsup_{r \rta 0} \frac{\mu_h\left( \mcl Q_Z \cap  B_r(Z)    \right)}{\mu_h(B_r(Z))} > 0 .
\eqe 

{Let $W$ be another point sampled from $\mu_h$ in such a way that $Z,W$ are conditionally independent given $ h$.}
By Lemma~\ref{lem-sphere-pt}, the law of $(\BB C , h , \infty, Z,W )$ is that of a triply marked unit area quantum sphere. 
That is, if $\phi : \BB C\rta\BB C$ is the complex affine map with $\phi(0)= Z$ and $\phi(1) = W$, then the surface $(\BB C , h\circ\phi  +Q\log |\phi'| ,\infty, 0, 1  )$ has the law described in Definition~\ref{def-sphere}. 
By Lemma~\ref{lem-sphere-abs-cont}, the restriction of $h\circ\phi  +Q\log |\phi'|$ to $B_{1/2}(0)$ is absolutely continuous with respect to {$\wt h|_{B_{1/2}(0)}$, where $\wt h$ is a whole-plane GFF plus $\gamma\log|\cdot|^{-1}$ (as in Lemma~\ref{lem-gff-limsup})}, with both random distributions viewed modulo additive constant. We note that adding a constant to the field does not change any of the ratios in~\eqref{eqn-limsup-uniform}.   

The law of $(\eta_1,\eta_2)$, viewed modulo time parametrization, is invariant under scaling and spatial translation, and $(\eta_1,\eta_2)$ is independent from $(Z,W)$, hence from $\phi$. {Consequently, $\phi^{-1}(\mcl Q_Z) \eqD \mcl Q_0$ and $\phi^{-1}(\mcl Q_Z)$ is independent from $h$.
By combining this with the absolute continuity in the previous paragraph, we get that the joint law of $((h\circ\phi  +Q\log |\phi'| )|_{B_{1/2}(0)} , \phi^{-1}(\mcl Q_Z))$ is absolutely continuous with respect to the joint law of $(\wt h|_{B_{1/2}(0)} , \mcl Q_0)$, with the distributions viewed modulo additive constant. }
We can therefore apply Lemma~\ref{lem-gff-limsup} to get that a.s.\ 
\eqb \label{eqn-limsup-uniform0}
\limsup_{r \rta 0} \frac{\mu_{h\circ \phi + Q\log|\phi'|}\left( \phi^{-1}( \mcl Q_Z ) \cap B_r(0)  \right)}{\mu_{h\circ \phi + Q\log|\phi'|}(B_r(0))} > 0 .
\eqe 
Since $\phi$ is complex affine, $\phi(B_r(0)) = B_{|\phi'(0)| r}(Z)$. {By this and the LQG coordinate change formula~\eqref{eqn-metric-measure-coord}, 
\alb
\frac{\mu_{h\circ \phi + Q\log|\phi'|}\left( \phi^{-1}( \mcl Q_Z ) \cap B_r(0)  \right)}{\mu_{h\circ \phi + Q\log|\phi'|}(B_r(0))} 
= \frac{\mu_{h }\left(  \mcl Q_Z   \cap B_{|\phi'(0)| r}(Z)  \right)}{\mu_{h }(B_{|\phi'(0)| r}(Z))} ,\quad\forall r > 0 . 
\ale}
Thus,~\eqref{eqn-limsup-uniform0} implies~\eqref{eqn-limsup-uniform}.
\end{proof}

We now prove an SLE  version of the ``monotone sets have zero mass'' condition from Proposition~\ref{prop-monotone-set}.

\begin{prop} \label{prop-sle-ordered} 
Assume that we are in the setting of Theorem~\ref{thm-lis}. Almost surely, the following is true. Let $X\subset \BB C$ be a $\mu_h$-measurable set with the following property: for $\mu_h$-a.e.\ pair of points $z,w\in X$, the curves $\eta_1$ and $\eta_2$ hit $z$ and $w$ in the same order. 
Then $\mu_h(X) = 0$. 
\end{prop}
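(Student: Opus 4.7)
The plan is to argue by contradiction, combining Lemma~\ref{lem-sphere-limsup} with the Lebesgue differentiation theorem for the Radon measure $\mu_h$. The latter holds on $\BB C$ for any Radon measure via the Besicovitch covering theorem, so no doubling assumption is needed.

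Suppose for contradiction that $\mu_h(X)>0$. Applied to the indicator $\mathbbm{1}_X$, the differentiation theorem yields, for $\mu_h$-a.e.\ $z\in X$,
\[
\lim_{r\to 0}\frac{\mu_h(X\cap B_r(z))}{\mu_h(B_r(z))}=1.
\]
Lemma~\ref{lem-sphere-limsup} gives, for $\mu_h$-a.e.\ $z\in\BB C$, the existence of $c(z)>0$ and a sequence $r_k(z)\downarrow 0$ with $\mu_h(\mcl Q_z\cap B_{r_k}(z))\ge c(z)\,\mu_h(B_{r_k}(z))$. Intersecting these two full-measure events inside $X$ and combining the two estimates along $\{r_k\}$, I would conclude that for $\mu_h$-a.e.\ $z\in X$,
\[
\mu_h(X\cap \mcl Q_z\cap B_{r_k}(z))\ \ge\ \tfrac{c(z)}{2}\,\mu_h(B_{r_k}(z))
\]
for all sufficiently large $k$, and in particular $\mu_h(X\cap \mcl Q_z)>0$.

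I would then integrate this strict inequality over $z\in X$ and apply Fubini to obtain
\[
(\mu_h\otimes \mu_h)\bigl(\{(z,w)\in X\times X : w\in \mcl Q_z\}\bigr)>0.
\]
Joint measurability of $(z,w)\mapsto \mathbbm{1}_{\{w\in \mcl Q_z\}}$ is straightforward: for $\mu_h$-a.e.\ $z$ the curves $\eta_1,\eta_2$ hit $z$ exactly once, so $\mcl Q_z$ is determined by the conditions $\eta_1^{-1}(w)\le \eta_1^{-1}(z)$ and $\eta_2^{-1}(w)\ge \eta_2^{-1}(z)$, which are jointly Borel in $(z,w)$. By definition~\eqref{eqn-non-mono-set}, however, the pairs $(z,w)$ with $w\in\mcl Q_z$ are precisely those for which $\eta_1$ hits $w$ before $z$ but $\eta_2$ hits $z$ before $w$, i.e., for which the orderings of $\{z,w\}$ under the two curves disagree. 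This contradicts the hypothesis that $(\mu_h\otimes\mu_h)$-a.e.\ pair in $X\times X$ is hit in the same order.

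I do not anticipate a serious obstacle: essentially all the real work has already been absorbed into Lemma~\ref{lem-sphere-limsup}. The one technical point worth flagging is that the differentiation theorem is applied to $\mu_h$, which is highly non-doubling; the Besicovitch covering theorem, valid in $\BB R^n$ for arbitrary Radon measures, supplies the needed density result without regularity assumptions on $\mu_h$.
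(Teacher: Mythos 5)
Your proof is correct and follows essentially the same route as the paper: both rest on Lemma~\ref{lem-sphere-limsup} together with the Lebesgue density theorem for the Radon measure $\mu_h$ (via Besicovitch), and the key observation that for points hit once, $w\in\mcl Q_z$ precisely encodes that $\eta_1,\eta_2$ visit $z,w$ in opposite orders. The paper phrases the argument directly (using the hypothesis to conclude $\mu_h(X\cap\mcl Q_z)=0$ for a.e.\ $z\in X$, then contradicting the density theorem) whereas you run it by contradiction and make the Fubini step explicit, but these are the same argument with the quantifiers rearranged.
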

\begin{proof}
For $z\in \BB C$, let $\mcl Q_z$ be as in~\eqref{eqn-non-mono-set}. By Lemma~\ref{lem-sphere-limsup}, a.s.\  
\eqb \label{eqn-use-quadrant}
\limsup_{r \rta 0} \frac{\mu_h\left( \mcl Q_z \cap  B_r(z)    \right)}{\mu_h(B_r(z))} > 0 , \quad \text{for $\mu_h$-a.e.\ $z\in\BB C$}. 
\eqe
By the Lebesgue density theorem for general Radon measures on $\BB C$~\cite[Corollary 2.14]{mattila-book}, a.s.\ for every $\mu_h$-measurable set $X \subset \BB C$, 
\eqb \label{eqn-use-density}
\lim_{r\rta 0} \frac{\mu_h(B_r(z) \cap X)}{\mu_h(B_r(z))} = 1,\quad \text{for $\mu_h$-a.e.\ $z\in X$}. 
\eqe 
If $z$ and $w$ are two points which are hit in the same order by $\eta_1$ and $\eta_2$, and $z$ and $w$ are each hit at most once by each of $\eta_1$ and $\eta_2$, then the definition~\eqref{eqn-non-mono-set} of $\mcl Q_z$ shows that $w\notin \mcl Q_z$. From this observation and the fact that a.s.\ $\mu_h$-a.e.\ $z \in \BB C$ is hit at most once by each of $\eta_1$ and $\eta_2$, we see that if  $X$ is a $\mu_h$-measurable set with the property in the proposition statement, then $\mu_h(X \cap \mcl Q_z) = 0$ for $\mu_h$-a.e.\ $z\in X$. By combining this with~\eqref{eqn-use-quadrant}, we obtain that a.s., for every such set $X$, 
\eqb \label{eqn-no-density}
\limsup_{r\rta 0} \frac{\mu_h(B_r(z) \cap X)}{\mu_h(B_r(z))} < 1,\quad \text{for $\mu_h$-a.e.\ $z\in X$}. 
\eqe 
By~\eqref{eqn-use-density} and~\eqref{eqn-no-density}, we get that $\mu_h(X)  = 0$.
\end{proof}

\begin{prop} \label{prop-sle-mono-set} 
Assume that we are in the setting of Theorem~\ref{thm-lis}. Almost surely, for each monotone set $A\subset [0,1]^2$ (Definition~\ref{def-monotone-set}), we have $\perm(A) = 0$.
\end{prop}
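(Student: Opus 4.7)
The plan is to reduce Proposition~\ref{prop-sle-mono-set} to the already-established Proposition~\ref{prop-sle-ordered} via a pushforward argument: I will realize $\perm(A)$ as the $\mu_h$-mass of a subset of $\BB C$, and then check that monotonicity of $A$ forces any two points of this subset to be visited by $\eta_1$ and $\eta_2$ in the same order.

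First, let $\mcl N \subset \BB C$ denote the set of multiple points of $\eta_1$ or of $\eta_2$, which satisfies $\mu_h(\mcl N) = 0$ almost surely by the argument in the proof of Lemma~\ref{lem-permuton-defined}. For $z \in \BB C \setminus \mcl N$ and $i \in \{1,2\}$, let $t_i(z) \in [0,1]$ be the unique time at which $\eta_i$ hits $z$. The continuity of $\eta_1$ and $\eta_2$ makes $z \mapsto (t_1(z),t_2(z))$ Borel measurable on the complement of a Borel superset of $\mcl N$ of zero $\mu_h$-mass. Given a monotone Borel set $A \subset [0,1]^2$, I set
\begin{equation*}
X_A := \{ z \in \BB C \setminus \mcl N : (t_1(z), t_2(z)) \in A \},
\end{equation*}
which is $\mu_h$-measurable.

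Next I would verify that $\perm(A) = \mu_h(X_A)$. For any $\psi$ satisfying~\eqref{eqn-psi-property}, Lebesgue-a.e.\ $t \in [0,1]$ has $\eta_1(t) \notin \mcl N$ (by the same argument as in the proof of Lemma~\ref{lem-permuton-defined}), and on this set of full Lebesgue measure $t = t_1(\eta_1(t))$ and $\psi(t) = t_2(\eta_1(t))$. Consequently, up to a Lebesgue null set,
\begin{equation*}
\{t \in [0,1] : (t,\psi(t)) \in A\} = \{t \in [0,1] : \eta_1(t) \in X_A\},
\end{equation*}
whose Lebesgue measure equals $\mu_h(X_A)$ since $\eta_1$ is parametrized by $\mu_h$-mass. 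Comparing with~\eqref{eqn-permuton-def} yields the claimed identity $\perm(A) = \mu_h(X_A)$.

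Finally, for any two points $z, w \in X_A$, both $(t_1(z), t_2(z))$ and $(t_1(w), t_2(w))$ lie in $A$, so the monotonicity of $A$ (Definition~\ref{def-monotone-set}) forces either $t_1(z) \leq t_1(w)$ together with $t_2(z) \leq t_2(w)$, or both inequalities reversed. In either case $\eta_1$ and $\eta_2$ hit $z$ and $w$ in the same order, so $X_A$ satisfies the hypothesis of Proposition~\ref{prop-sle-ordered}. Thus $\mu_h(X_A) = 0$, and hence $\perm(A) = 0$. There is no real obstacle: all the substantive geometric content — that $\eta_1$ and $\eta_2$ disagree about the order of points in many microscopic directions near $\mu_h$-a.e.\ point — has already been captured in Lemma~\ref{lem-sphere-limsup} and Proposition~\ref{prop-sle-ordered}, and what remains is only the measurability bookkeeping for $X_A$ and the pushforward identity $\perm(A) = \mu_h(X_A)$.
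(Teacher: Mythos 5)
Your proof is correct and follows essentially the same route as the paper's: both reduce the claim to Proposition~\ref{prop-sle-ordered} by exhibiting a $\mu_h$-measurable subset of $\BB C$ (your $X_A$; the paper's $\eta_1(T_A)$ with $T_A = \{t : (t,\psi(t))\in A\}$, which coincides with $X_A$ up to a $\mu_h$-null set) that is visited in the same order by $\eta_1$ and $\eta_2$ thanks to the monotonicity of $A$, and then transferring the resulting zero-mass conclusion to $\perm(A)$ via the parametrization of $\eta_1$ by $\mu_h$-mass. The only cosmetic difference is that you work directly in $\BB C$ and establish the exact identity $\perm(A)=\mu_h(X_A)$, whereas the paper works in $[0,1]$ with $T_A$ and only needs the implication $\mu_h(\eta_1(T_A))=0 \Rightarrow \op{Leb}(T_A)=0 \Rightarrow \perm(A)=0$.
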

\begin{proof} 
Let $\psi : [0,1] \rta [0,1]$ be a function as in the definition~\eqref{eqn-permuton-def} of $\perm$, so that $\eta_1(t) = \eta_2(\psi(t))$ for each $t\in [0,1]$. 
For a monotone set $A\subset [0,1]^2$, let 
\eqb \label{eqn-mono-set-projection}
 T_A = \left\{ t\in [0,1] : (t,\psi(t)) \in A  \right\}  .
\eqe
We claim that a.s.\ for every monotone set $A$, it holds that $\mu_h$-a.e.\ pair of points $z,w\in \eta_1(T_A)$ are hit in the same order by $\eta_1$ and $\eta_2$. 
Almost surely, the set of points in $\BB C$ which are hit more than once by either $\eta_1$ or $\eta_2$ has zero $\mu_h$-mass, so we can assume that each of $z$ and $w$ is hit exactly once by each of $\eta_1$ and $\eta_2$. 
Fix $z,w\in T_A$ and let $t_z\in T_A$ and $t_w \in T_A$ be the times when $\eta_1$ hits $z$ and $w$, respectively, and assume without loss of generality that $t_z < t_w$.  
By definition of $T_A$, $(t_z,\psi(t_z))$ and  $(t_w,\psi(t_w))$ belong to $A$ which is monotone. By Definition~\ref{def-monotone-set} of a monotone set and since $t_z<t_w$, it follows that $\psi(t_z) < \psi(t_w)$.
Since $\eta_2$ hits each of $z$ and $w$ only once, $\psi(t_z)$ and $\psi(t_w)$ are the unique times at which $\eta_2$ hits $z$ and $w$. Hence $z$ and $w$ are hit in the same order by $\eta_1$ and $\eta_2$.

By Proposition~\ref{prop-sle-ordered} and the preceding claim, it is a.s.\ the case that for every monotone set $A$, the $\mu_h$-mass of $\eta_1(T_A)$ is zero. Since $\eta_1$ is parametrized by $\mu_h$-mass, this implies that the one-dimensional Lebesgue measure of $T_A$ is zero. By the definition~\eqref{eqn-permuton-def} of $\perm$, this means that $\perm(A) = 0$. 
\end{proof}

\begin{proof}[Proof of Theorem~\ref{thm-lis}]
The statement for the longest increasing subsequence follows combining Propositions~\ref{prop-monotone-set} and~\ref{prop-sle-mono-set}. The statement for the longest decreasing subsequence can be obtained by the same argument. Alternatively, one can observe that if $\eta_1$ and $\eta_2$ are a pair of SLE curves as in Theorem~\ref{thm-lis}, then the pair consisting of $\eta_1$ and the time reversal of $\eta_2$ also satisfies the conditions of Theorem~\ref{thm-lis}.  
\end{proof}
 
{
\begin{remark}
The proof of Theorem~\ref{thm-lis} did not use many properties which are specific to SLE and LQG. 
Indeed, all we needed was some form of approximate scale invariance in a neighborhood of a point sampled from the LQG measure $\mu_h$ (in our setting, this comes from the local absolute continuity statements in the proof of Lemma~\ref{lem-sphere-limsup}) and the fact that for a point $Z$ sampled from $\mu_h$, the set $\mcl Q_Z$ a.s.\ has non-empty interior. 
Consequently, the arguments of this section could in principle be extended to prove analogs of Theorem~\ref{thm-lis} for other permutons constructed from pairs of random space-filling curves as in Section~\ref{sec-permuton-def}. 
However, we do not have in mind any important examples of such permutons where the curves are not SLE curves parametrized by LQG mass, so we do not pursue this point further here.
\end{remark}
}

\section{Dimension of the support is one}
\label{sec-dim-proof}

In this section we will prove Theorem~\ref{thm-permuton-dim}, which asserts that the dimension of the closed support $\op{supp} \perm$ is one for \emph{any} of the permutons defined in Section~\ref{sec-permuton-def}. We recall from Theorem~\ref{thm-permuton-dim} that $\mcl T = \{(t,s) \in [0,1]^2 : \eta_1(t) = \eta_2(s)\}$ .

Since $\perm$ is a permuton, by definition it is immediate that the projection of $\op{supp} \perm$ onto the first coordinate of $[0,1]^2$ is all of $[0,1]$.  Since the projection map is 1-Lipschitz, this implies that the Hausdorff dimension of $\op{supp} \perm$ is at least one. From this and Lemma~\ref{lem-permuton-inclusion}, we get that a.s.\ 
\eqb \label{eqn-permuton-dim-trivial}
1 \leq \dim_{\mcl H} \op{supp} \perm \leq \dim_{\mcl H} \mcl T,
\eqe
where $\dim_{\mcl H}$ denotes the Hausdorff dimension of a set. 
It therefore suffices to show that a.s.\ $\dim_{\mcl H} \mcl T \leq 1$.

\subsection{Core argument}
\label{sec-permuton-dim-core}

By the definition of $\mcl T$, if $[a,b]\times [c,d] \subset [0,1]^2$ is a rectangle, then $\mcl T$ intersects $[a,b]\times[c,d]$ if and only if $\eta_1([a,b]) \cap \eta_2( [c,d] ) \not=\emptyset $. We therefore seek an upper bound for the probability that the images of two small time intervals under $\eta_1$ and $\eta_2$ intersect. The following lemma is the main input in the proof of Theorem~\ref{thm-permuton-dim}.

\begin{lem} \label{lem-sle-intersect-prob}
Assume that we are in the setting of Theorem~\ref{thm-permuton-dim}. 
Fix $\zeta\in (0,1)$ and $\delta \in (0,1/2)$. 
There exist $E_\ep = E_\ep(\zeta,\delta) \in \sigma(h,\eta_1,\eta_2)$ for $\ep > 0$ such that $\lim_{\ep\rta 0} \BB P[E_\ep] = 1$ and the following is true.
Let $S\in [\delta,1-\delta]$ be sampled uniformly from Lebesgue measure on $[\delta,1-\delta]$, independently from everything else.
For each $t\in [\delta,1-\delta]$ and each small enough $\ep > 0$ (depending on $\delta$),  
\eqb \label{eqn-sle-intersect-prob}
\BB P\left[E_\ep ,\, \eta_1([t,t+\ep]) \cap \eta_2([S,S+\ep]) \not=\emptyset\right] \leq \ep^{1 - \zeta }.
\eqe
\end{lem}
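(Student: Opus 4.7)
The plan is to use the LQG metric $D_h$ to convert the intersection event into a bound on the $\mu_h$-mass of an LQG metric ball. Let $d_\gamma > 2$ denote the Hausdorff dimension of $(\BB C, D_h)$, fix $\alpha := \zeta/(2 d_\gamma)$, and set $r = r(\ep) := \ep^{1/d_\gamma - \alpha}$. I will take $E_\ep = F_\ep^1 \cap F_\ep^2 \cap F_\ep^3$, where $F_\ep^i$ for $i \in \{1,2\}$ is the event that the $D_h$-diameter of $\eta_i([u, u+\ep])$ is at most $r$ for every $u \in [0, 1-\ep]$, and $F_\ep^3$ is the event that $\mu_h(B_{3r}^{D_h}(z)) \leq (1-2\delta)\,\ep^{1-\zeta}$ for every point $z$ in the image $\eta_1([\delta, 1-\delta])$.

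The deterministic core is a triangle inequality. On $F_\ep^1 \cap F_\ep^2$, if $\eta_1([t, t+\ep])$ intersects $\eta_2([S, S+\ep])$ then both curve-segments lie in $B_{3r}^{D_h}(\eta_1(t))$; in particular $\eta_2(S) \in B_{3r}^{D_h}(\eta_1(t))$, so $S \in \eta_2^{-1}(B_{3r}^{D_h}(\eta_1(t)))$. Since $\eta_2$ is parametrized by $\mu_h$-mass and $\mu_h$-a.e.\ point of $\BB C$ is hit exactly once, the Lebesgue measure of this preimage equals $\mu_h(B_{3r}^{D_h}(\eta_1(t)))$. Conditioning on $(h,\eta_1,\eta_2)$ and averaging over $S$, which is uniform on $[\delta, 1-\delta]$ and independent of everything else, yields on $E_\ep$
\eqbn
\BB P\bigl[\eta_1([t, t+\ep]) \cap \eta_2([S, S+\ep]) \neq \emptyset \,\big|\, h, \eta_1, \eta_2\bigr] \,\leq\, \frac{\mu_h(B_{3r}^{D_h}(\eta_1(t)))}{1 - 2\delta} \,\leq\, \ep^{1-\zeta},
\eqen
where the last inequality uses the defining bound of $F_\ep^3$ applied to $z = \eta_1(t)$. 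Integrating against $\BB 1_{E_\ep}$ gives \eqref{eqn-sle-intersect-prob}.

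It remains to verify $\BB P[F_\ep^i] \to 1$ as $\ep \to 0$ for each $i$. Both parts reduce to standard volume-radius relationships for LQG on a bounded region $B_R(0)$: a uniform lower bound $\mu_h(B_{r'}^{D_h}(z)) \geq (r')^{d_\gamma + o(1)}$, which combined with the fact that $\eta_i([u, u+\ep])$ has $\mu_h$-mass exactly $\ep$ implies the required modulus of continuity for $\eta_1$ and $\eta_2$ and hence $F_\ep^1, F_\ep^2$; and a uniform upper bound $\mu_h(B_{r'}^{D_h}(z)) \leq (r')^{d_\gamma - \zeta'}$ valid for any $\zeta' > 0$, which gives $F_\ep^3$. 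Choosing $\zeta' = \zeta d_\gamma / 4$ gives $r^{d_\gamma - \zeta'} = \ep^{1 - 3\zeta/4} \leq (1-2\delta)\,\ep^{1-\zeta}$ for small enough $\ep$. The required volume bounds are proven in \cite{afs-metric-ball, gs-lqg-minkowski}. To apply them, I first fix $R = R(\ep)$ growing slowly to $\infty$ and restrict to the event $\eta_1([\delta, 1-\delta]) \subset B_R(0)$, which has probability tending to $1$ because the restriction of $\eta_1$ to the compact set $[\delta, 1-\delta] \subset (0,1)$ is a continuous map into $\BB C \setminus \{\infty\}$ and hence has bounded image almost surely.

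The main technical obstacle is that the volume estimates in \cite{afs-metric-ball, gs-lqg-minkowski} are phrased for a whole-plane GFF, whereas here $h$ is the field of a singly marked unit area quantum sphere. This is handled by Lemma~\ref{lem-sphere-abs-cont}: restricted to the fixed bounded region $B_R(0)$, the law of $h$ is mutually absolutely continuous (modulo additive constant) with that of a whole-plane GFF minus $\gamma \log|\cdot|$. Since the events $F_\ep^1, F_\ep^2, F_\ep^3$ involve $h$ only through $\mu_h$ and $D_h$ on $B_R(0)$, and the inequalities we need are scale-invariant under the multiplicative rescaling of $\mu_h$ and $D_h$ produced by the additive normalization of the field, these events have probability tending to $1$ for the quantum sphere whenever they do for the whole-plane GFF. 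This reduces the verification entirely to the whole-plane setting where the cited bounds apply directly.
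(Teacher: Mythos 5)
Your proof is essentially the same as the paper's: define $E_\ep$ via a H\"older modulus of continuity for each $\eta_i$ together with a uniform upper bound on the $\mu_h$-mass of small $D_h$-balls, use the triangle inequality to confine any intersection of $\eta_1([t,t+\ep])$ with $\eta_2([S,S+\ep])$ to a small $D_h$-ball around $\eta_1(t)$, and conclude using that the conditional law of $\eta_2(S)$ given $(h,\eta_1,\eta_2)$ is $\mu_h$ restricted to $\eta_2([\delta,1-\delta])$; the paper isolates the two high-probability inputs as Lemmas~\ref{lem-sphere-holder} and~\ref{lem-sphere-ball}, but the core bound is identical.

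Two remarks on your sketch of $\BB P[F_\ep^i]\to 1$. First, the claim that the volume--radius inequality is ``scale-invariant under the multiplicative rescaling of $\mu_h$ and $D_h$ produced by the additive normalization'' is not literally true: with $\xi=\gamma/d_\gamma$, adding a constant $C$ to the field sends $\mu_h(B_r^{D_h}(z)) \leq r^{d_\gamma-\zeta'}$ to $\mu_h(B_s^{D_h}(z)) \leq e^{-\xi\zeta' C} s^{d_\gamma-\zeta'}$, so the inequality picks up a random multiplicative factor. The paper's fix is simply that this factor is a.s.\ finite and can be absorbed since there is slack in the exponent for $r$ small. Second, a volume \emph{lower} bound for $D_h$-balls does not by itself give the H\"older modulus for the space-filling curve --- a connected set of large $D_h$-diameter need not contain a large $D_h$-ball. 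The correct input is a dedicated SLE/LQG estimate, namely \cite[Theorem 1.4]{gs-lqg-minkowski}, which is precisely what the paper's Lemma~\ref{lem-sphere-holder} invokes. Neither issue affects the validity of your overall argument, since the needed statements are exactly what the cited references establish.
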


When we apply Lemma~\ref{lem-sle-intersect-prob}, we will take $\zeta$ and $\delta$ to be small. Note that the statement of the lemma would not be true if we took $S$ to be a fixed deterministic time instead of a random time. The reason is that the coupling of $\eta_1$ and $\eta_2$, viewed as curves modulo time parametrization, is arbitrary. For example, we allow $\eta_1=\eta_2$ in which case the probability that $\eta_1([t,t+\ep]) \cap \eta_2([t,t+\ep]) \not=\emptyset $ is 1. 

The idea of the proof of Lemma~\ref{lem-sle-intersect-prob} is as follows. Since $\eta_1$ and $\eta_2$ are parametrized by LQG mass, the set $\eta([t,t+\ep])$ has $\mu_h$-mass $\ep$ and the conditional law of $\eta_2(S)$ given $(h,\eta_1,\eta_2)$ is that of a sample from the restriction of $(1-2\delta)^{-1} \mu_h$ to $\eta_2([\delta,1-\delta])$. Therefore, the probability that $\eta_2(S) \in \eta_1([t,t+\ep])$ is at most of order $\ep$. To deduce an upper bound for the probability in~\eqref{eqn-sle-intersect-prob}, we will improve this estimate by showing that it is unlikely for $\eta_2(S)$ to be ``close'' to $\eta_1([t,t+\ep])$ and that it is unlikely for $\eta_2([S,S+\ep])$ to be ``large''. 

It is convenient to measure ``close'' and ``large'' in terms of the $\gamma$-LQG metric $D_h$ associated with $h$. This is because a space-filling SLE curve parametrized by LQG mass behaves nicely with respect to this metric (see Lemma~\ref{lem-sphere-holder} below). In what follows, we let $d_\gamma$ be the Hausdorff dimension of $\BB C$ with respect to the metric $D_h$, which is a.s.\ equal to a finite deterministic constant~\cite[Corollary 1.7]{gp-kpz}. 
We will need the following two lemmas.

\begin{lem} \label{lem-sphere-holder}
Let $\gamma \in (0,2)$ and $\kappa>4$. 
Let $(\BB C , h , \infty)$ be a singly marked unit area $\gamma$-Liouville quantum sphere. Let $\eta : [0,1]\rta \BB C$ be a whole-plane space-filling SLE$_\kappa$ from $\infty$ to $\infty$ sampled independently from $h$, then parametrized by $\gamma$-LQG mass with respect to $h$. 
Also fix $\zeta\in (0,1)$ and $\delta \in (0,1/2)$. Almost surely, for each small enough $\ep > 0$ (how small is random),  
\eqb \label{eqn-sphere-holder}
D_h(\eta(s) , \eta(t)) \leq \ep^{1/d_\gamma - \zeta} ,\quad\forall s , t \in [\delta,1-\delta] \quad \text{such that} \quad |s-t| \leq \ep .
\eqe
\end{lem}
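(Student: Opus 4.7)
The plan is to reduce the Hölder bound on the quantum sphere to an analogous Hölder regularity of space-filling SLE with respect to the LQG metric on an infinite-volume background, via local absolute continuity together with a compactness argument.

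First, by the construction of whole-plane space-filling SLE and continuity, there is a.s.\ a (random) $R > 0$ such that $\eta([\delta, 1-\delta]) \subset B_R(0)$. By the locality axiom~\cite[Axiom II]{gm-uniqueness} for the LQG metric, the internal metric $D_h(\cdot,\cdot;B_{R+1}(0))$ is a measurable function of $h|_{B_{R+1}(0)}$, and since $D_h \leq D_h(\cdot,\cdot;B_{R+1}(0))$ on $B_R(0)$, it suffices to bound this internal metric between $\eta(s)$ and $\eta(t)$ for $s,t \in [\delta,1-\delta]$ with $|s-t|\leq \ep$.

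Second, by Lemma~\ref{lem-sphere-abs-cont} and the independence of $\eta$ from $h$, the joint law of $(h|_{B_{R+1}(0)}, \eta)$ (with the field taken modulo additive constant) is mutually absolutely continuous with that of $(\wt h|_{B_{R+1}(0)}, \eta)$, where $\wt h = h^{\BB C} - \gamma\log|\cdot|$ and $h^{\BB C}$ is a whole-plane GFF. Adding a constant $c$ to the field rescales $\mu_h$ by $e^{\gamma c}$ and $D_h$ by $e^{(\gamma/d_\gamma)c}$, and re-parametrizing $\eta$ by the new LQG mass amounts to a deterministic time change; a short calculation shows that this operation preserves the Hölder exponent $1/d_\gamma - \zeta$ up to an arbitrarily small additive loss in the exponent, absorbed by taking $\zeta$ slightly larger. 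Hence it suffices to prove the analogous Hölder bound with $h$ replaced by $\wt h$.

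Third, in this reference setting the required Hölder regularity follows from estimates in~\cite{gs-lqg-minkowski}, building on metric ball volume bounds from~\cite{afs-metric-ball}. The key input is that, uniformly in $z$ in a fixed bounded region and $r$ small, the $\mu_{\wt h}$-mass of the $D_{\wt h}$-ball of radius $r$ around $z$ is at least $r^{d_\gamma + \zeta}$ with polynomially high probability in $r^{-1}$. Combined with the fact that $\wt\eta$ is parametrized by $\mu_{\wt h}$-mass, this yields, for each fixed $t$ and small $\ep$, a polynomial-in-$\ep$ bound on $\BB P[D_{\wt h}(\wt\eta(t), \wt\eta(t+\ep)) > \ep^{1/d_\gamma - \zeta}]$. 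A union bound over a dyadic grid of times of spacing $\ep$ in $[\delta,1-\delta]$, combined with Borel--Cantelli over dyadic values of $\ep$, upgrades this single-time bound to the uniform Hölder bound in the lemma statement. The main obstacle is pinning down the precise quantitative form of the metric-ball volume bound from~\cite{afs-metric-ball,gs-lqg-minkowski} that one needs and organizing the union bound correctly; once these are done, the transfer via local absolute continuity and the compactness reduction are routine.
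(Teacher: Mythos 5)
Your overall strategy (locality, absolute continuity to an infinite-volume reference field, then invoke the H\"older estimate of \cite{gs-lqg-minkowski}) is the same as the paper's, but there is a genuine gap in your Step 2. You assert that Lemma~\ref{lem-sphere-abs-cont} makes $(h|_{B_{R+1}(0)},\eta)$ mutually absolutely continuous with $(\wt h|_{B_{R+1}(0)},\eta)$ for $\wt h = h^{\BB C} - \gamma\log|\cdot|$. That is not what the lemma says: once you pin down an embedding so that $h$ is a concrete field (as opposed to an equivalence class), the natural choice is the triply marked sphere $(\BB C,h,\infty,0,1)$, which has $\gamma$-log singularities at \emph{both} $0$ and $1$. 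Lemma~\ref{lem-sphere-abs-cont} gives absolute continuity with the two-singularity field $\wt h_* = h^{\BB C} - \gamma\log|\cdot| - \gamma\log|\cdot -1|$ on all of $B_R(0)$, but with the single-singularity field $\wt h$ only on $B_R(0)\setminus \ol{B_{1/R}(1)}$, i.e., away from the marked point at $1$. Since $\eta([\delta,1-\delta])$ typically contains a neighborhood of $1$ (it is just a region carved out by $\mu_h$-mass), your absolute continuity step does not cover the full range of $s,t$.

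You have two ways to close the gap. One is to carry out the transfer with $\wt h_*$ rather than $\wt h$; but then you must justify that the uniform H\"older estimate of \cite[Theorem~1.4]{gs-lqg-minkowski} still holds in the presence of a second $\gamma$-log singularity at $1$, which is not literally what that theorem states (the paper's footnote explains how to pass from the quantum-cone embedding to $\wt h$, but not to $\wt h_*$). The other is the route the paper takes: first prove the H\"older bound only for $s,t$ with $\eta(s),\eta(t)$ outside a small neighborhood of $1$, i.e., for $s,t\notin[T-\delta,T+\delta]$ where $T$ is the hitting time of $1$; then remove this restriction by re-sampling the marked point at $1$ uniformly from $\mu_h$ (Lemma~\ref{lem-sphere-pt}) and noting that the resulting uniform $T$ lets you cover all of $[\delta,1-\delta]$. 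Without one of these two fixes, the argument as written establishes the bound only on a random, unspecified subset of $[\delta,1-\delta]^2$. Separately, your Step 3 re-derives a single-time estimate plus union bound plus Borel--Cantelli; this is unnecessary since \cite[Theorem~1.4]{gs-lqg-minkowski} already gives the uniform-in-time statement directly, but that is a matter of efficiency, not correctness.
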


For the next lemma, we introduce the notation
\eqb \label{eqn-lqg-ball-def}
\mcl B_r(z;D_h) := \left( \text{$D_h$-ball of radius $r$ centered at $z$} \right) , \quad \forall z\in\BB C\cup\{\infty\} ,\quad\forall r > 0 .
\eqe
 
\begin{lem} \label{lem-sphere-ball}
Let $(\BB C ,h , \infty)$ be a singly marked unit area $\gamma$-Liouville quantum sphere. Also fix $\zeta\in (0,1)$. Almost surely, for each small enough $r > 0$ (how small is random), 
\eqb
r^{d_\gamma +  \zeta} \leq \mu_h\left(\mcl B_r(z;D_h) \right) \leq r^{d_\gamma -\zeta} , \quad\forall z \in \BB C \cup \{\infty\} .
\eqe
\end{lem}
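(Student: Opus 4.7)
\noindent \textbf{Proof plan for Lemma~\ref{lem-sphere-ball}.}

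The plan is to combine the analogous bounds for a whole-plane GFF, established with uniformity on compact subsets of $\BB C$ in~\cite{afs-metric-ball,gs-lqg-minkowski}, with the local absolute continuity statement of Lemma~\ref{lem-sphere-abs-cont} and the LQG coordinate change formula. Since the Riemann sphere $\BB C \cup \{\infty\}$ is compact, a finite covering argument shows that it suffices to establish the two-sided bound uniformly in $z$ in a neighborhood of each fixed $z_0 \in \BB C \cup \{\infty\}$. I will work with the embedding of Definition~\ref{def-sphere}, for which $h$ has deterministic $\gamma$-log singularities at $0$, $1$, and $\infty$.

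For $z_0 \in \BB C \setminus \{0,1\}$, I would invoke Lemma~\ref{lem-sphere-abs-cont} to obtain mutual absolute continuity (modulo additive constant) between the restriction of $h$ to a small ball around $z_0$ and the corresponding restriction of a whole-plane GFF $h^{\BB C}$. The full-measure event from~\cite{afs-metric-ball,gs-lqg-minkowski} on which $r^{d_\gamma + \zeta} \leq \mu_{h^{\BB C}}(\mcl B_r(z; D_{h^{\BB C}})) \leq r^{d_\gamma - \zeta}$ holds uniformly for $z$ in a compact subset of $\BB C$ then transfers. The additive-constant ambiguity is harmless: adding $C$ to the field rescales $\mu_h$ by $e^{\gamma C}$ and $D_h$ by $e^{\gamma C/d_\gamma}$ (Weyl scaling), so a $D_h$-ball of radius $r$ is sent to one of $D_h$-radius $r e^{-\gamma C/d_\gamma}$, and the corresponding $\mu_h$-mass changes by $e^{\gamma C}$; these two rescalings combine to preserve the exponent $d_\gamma$ up to an $o(1)$ error that is absorbed into $\zeta$ for small $r$.

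For $z_0 = \infty$, I would apply the LQG coordinate change of Definition~\ref{def-lqg-surface} with $\phi(w) = 1/w$. The transformed field $\wt h := h \circ \phi + Q \log|\phi'|$ defines the same quantum surface as $h$, so by~\eqref{eqn-metric-measure-coord} the $\mu$-masses of $D$-balls are preserved; moreover, a direct calculation using the explicit form of $h$ from Definition~\ref{def-sphere} shows that $\wt h$ has a $\gamma$-log singularity at $0$ (the $-(2Q-\gamma)\log|\cdot|_+$ term in $h$ combines with the $-2Q \log|w|$ coordinate-change term to produce a $-\gamma \log|w|$ behavior for $\wt h$ near $w = 0$). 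The question for $z$ near $\infty$ thus reduces to the question for $w$ near $0$ for the transformed field, which falls under the last remaining case.

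For $z_0 \in \{0,1\}$, the second assertion of Lemma~\ref{lem-sphere-abs-cont} provides local absolute continuity (modulo additive constant) between $h$ near $z_0$ and a whole-plane GFF minus $\gamma \log|\cdot - z_0|$. The required two-sided bounds for this singular field can be obtained either directly from~\cite{afs-metric-ball,gs-lqg-minkowski} (whose techniques accommodate $\gamma$-log insertions, corresponding to thick points of thickness $\gamma < Q$), or by using the exact scale invariance of $h^{\BB C} - \gamma \log|\cdot|$ modulo additive constant (the same invariance exploited in the proof of Lemma~\ref{lem-gff-limsup}) to transfer uniform bounds on an annulus centered at $0$ into uniform bounds on small neighborhoods of $0$ by rescaling. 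The hard part, and really the only non-routine step, will be verifying this uniformity of the estimates across the $\gamma$-log insertion points; once that is in hand, the three cases combine to give the lemma.
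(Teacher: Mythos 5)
Your proposal is correct and takes an organizationally different route from the paper. The paper establishes the two-sided ball-volume estimate for the field $\wt h = h^{\BB C}-\gamma\log|\cdot|$ uniformly on $B_R(0)$ (citing~\cite{afs-metric-ball}, with a footnote remark that the cited proof accommodates a $\gamma$-log insertion, or alternatively via a dyadic-annuli/scale-invariance argument), transfers it to $h$ on $B_R(0)$ by the absolute-continuity argument already set up in the proof of Lemma~\ref{lem-sphere-holder}, and then covers $\BB C\setminus B_{1/R}(0)$ --- hence all of $\BB C\cup\{\infty\}$ once $R>1$ --- in a single stroke via the global inversion symmetry $h(1/\cdot)\eqD h$, i.e.\ the exchangeability of the marked points $0$ and $\infty$ on the quantum sphere (Lemma~\ref{lem-sphere-pt}). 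By contrast, you cover the compact Riemann sphere by finitely many neighborhoods and handle $\BB C\setminus\{0,1\}$, the point $\infty$, and the points $\{0,1\}$ by separate local absolute-continuity comparisons and coordinate changes. Both routes hinge on the same non-routine input --- the ball-volume estimate for a whole-plane GFF with a $\gamma$-log insertion --- which the paper disposes of in a footnote and which you rightly flag as ``the hard part'' and propose to handle in essentially the same way. The paper's inversion trick is tighter; your covering argument is more verbose but makes the behavior near each singular point fully explicit, and your computation of the $-\gamma\log|w|$ singularity of the transformed field near $w=0$ is correct.

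One minor imprecision worth noting: Lemma~\ref{lem-sphere-abs-cont} as stated compares $h$ with $\wt h_*$ (two log insertions, on $B_R(0)$) and with $\wt h$ (one log insertion, on $B_R(0)\setminus\ol{B_{1/R}(1)}$), not directly with the plain GFF $h^{\BB C}$, and its second assertion excludes a neighborhood of $1$. Your invocations for $z_0\in\BB C\setminus\{0,1\}$ and for $z_0=1$ therefore require small extensions of that lemma --- subtracting the log-terms that are smooth near the relevant $z_0$, by the same appeal to~\cite[Proposition 2.9]{ig4} that is used in the proof of Lemma~\ref{lem-sphere-abs-cont}. These are routine and do not affect the validity of the argument.
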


Lemmas~\ref{lem-sphere-holder} and~\ref{lem-sphere-ball} are straightforward consequences of known estimates for the LQG metric~\cite{afs-metric-ball,gs-lqg-minkowski}, but some work is needed to transfer from the case of a whole-plane GFF (which is the case treated by the results we cite) to the case of a unit area quantum sphere. We postpone the proofs until after the proof of Theorem~\ref{thm-permuton-dim}.

\begin{proof}[Proof of Lemma~\ref{lem-sle-intersect-prob}]
Fix $\wt\zeta\in (0,\zeta)$ to be chosen later, in a deterministic manner depending only on $\gamma$ and $\zeta$. 
Let $E_\ep = E_\ep(\wt \zeta,\delta)$ be the event that the following is true:
\begin{enumerate}[$(i)$]
\item For each $i\in \{1,2\}$, we have $D_h(\eta_i(s) , \eta_i(t)) \leq \ep^{1/d_\gamma - \wt\zeta}$ for each $ s , t \in [\delta/2 ,1-\delta/2] $ such that $|s-t| \leq \ep$; \label{item-intersect-holder}
\item $\mu_h\left(\mcl B_r(z;D_h) \right) \leq r^{d_\gamma -\wt\zeta}$ for each $r \in (0,  2 \ep^{1/d_\gamma-\wt\zeta}]$.  \label{item-intersect-ball}
\end{enumerate}
By Lemma~\ref{lem-sphere-holder} (applied once to each of $\eta_1$ and $\eta_2$) and Lemma~\ref{lem-sphere-ball}, we have $\BB P[E_\ep] \rta 1$ as $\ep\rta 0$. 

We henceforth assume that $\ep < \delta/2$. Let $t$ and $S$ be as in the lemma statement. If $E_\ep$ occurs, then by condition~\eqref{item-intersect-holder} in the definition of $E_\ep$, 
\eqb \label{eqn-sle-in-ball}
\eta_1([t,t+\ep]) \subset \mcl B_{\ep^{1/d_\gamma - \wt\zeta}}(\eta_1(t) ; D_h)  \quad \text{and} \quad \eta_2([S,S+\ep]) \subset \mcl B_{\ep^{1/d_\gamma - \wt\zeta}}(\eta_2(S) ; D_h) .
\eqe
By condition~\eqref{item-intersect-ball} in the definition of $E_\ep$, on this event the $\mu_h$-mass of the larger ball $\mcl B_{2\ep^{1/d_\gamma - \wt\zeta}}(\eta_1(t) ; D_h)$ is at most a $(\gamma,\wt\zeta)$-dependent constant times $\ep^{(d_\gamma -\wt\zeta) (1 / d_\gamma - \wt\zeta) }$. Since $S$ is sampled from Lebesgue measure on $[\delta,1-\delta]$, independently from everything else, and $\eta_2$ is parametrized by $\mu_h$-mass, the conditional law of $\eta_2(S)$ given $(h,\eta_1,\eta_2)$ is given by the restriction of $(1-2\delta)^{-1} \mu_h$ to $\eta_2([\delta,1-\delta])$. Therefore,
\eqb \label{eqn-use-ball-mass}
\BB P\left[ E_\ep ,\, \eta_2(S) \in \mcl B_{2\ep^{1/d_\gamma - \wt\zeta}}(\eta_1(t) ; D_h) \right] 
\preceq \BB E\left[ \BB 1_{E_\ep} \mu_h\left( \mcl B_{2\ep^{1/d_\gamma - \wt\zeta}}(\eta_1(t) ; D_h) \right) \right]
\preceq \ep^{(d_\gamma -\wt\zeta) (1 / d_\gamma - \wt\zeta) } ,
\eqe
where $\preceq$ denotes inequality up to a finite constant which depends only on $\gamma , \wt\zeta , \delta$. 

By~\eqref{eqn-sle-in-ball} and the triangle inequality for $D_h$, if $E_\ep$ occurs and $\eta_2(S) \notin \mcl B_{2\ep^{1/d_\gamma - \wt\zeta}}(\eta_1(t) ; D_h)$, then $\eta_1([t,t+\ep])\cap \eta_2([S,S+\ep]) = \emptyset$. Therefore,~\eqref{eqn-use-ball-mass} implies that 
\eqbn
\BB P\left[E_\ep ,\, \eta_1([t,t+\ep]) \cap \eta_2([S,S+\ep]) \not=\emptyset\right] \preceq \ep^{(d_\gamma -\wt\zeta) (1 / d_\gamma - \wt\zeta) } .
\eqen
We now obtain~\eqref{eqn-sle-intersect-prob} by choosing $\wt\zeta > 0$ to be small enough that $(d_\gamma-\wt\zeta)(1/d_\gamma - \wt\zeta) < 1-\zeta$. 
\end{proof}

\begin{proof}[Proof of Theorem~\ref{thm-permuton-dim}] 
By~\eqref{eqn-permuton-dim-trivial}, we only need to show that a.s.\ $\dim_{\mcl H}\mcl T\leq 1$. 
Fix $\zeta\in (0,1)$ and $\delta \in (0,1/2)$. By the countable stability of Hausdorff dimension, it suffices to show that a.s.\ 
\eqb \label{eqn-permuton-dim-show}
\dim_{\mcl H} \left( \mcl T \cap [\delta,1-\delta]^2  \right)  \leq 1 +\zeta .
\eqe
To this end, let $E_\ep$ be the event of Lemma~\ref{lem-sle-intersect-prob} with $\zeta/2$ in place of $\zeta$, and recall that $\BB P[E_\ep] \rta 1$ as $\ep\rta 0$. We will upper-bound the expected number of $\ep\times \ep$ squares needed to cover the set in~\eqref{eqn-permuton-dim-show} on the event $E_\ep$.
 
Since $\mcl T = \{(t,s) \in [0,1]^2 : \eta_1(t) = \eta_2(s)\}$, for $x , y \in [0,1-\ep]^2$, the set $\mcl T$ intersects $[x,x+\ep] \times [y,y+\ep]$ if and only if $\eta_1([x , x+\ep]) \cap \eta_2([y,y+\ep]) \not=\emptyset$. Therefore, the expected number of $\ep\times \ep$ squares needed to cover $\mcl T \cap [\delta,1-\delta]^2$, truncated on the event $E_\ep$, is at most
\eqb \label{eqn-permuton-dim-sum}
\BB E\left[  \BB 1_{E_\ep} \sum_{x \in [\delta - \ep ,1-\delta ] \cap \ep\BB Z}  \sum_{y \in [\delta - \ep ,1-\delta ] \cap \ep\BB Z} \BB 1_{\eta_1([x , x+\ep]) \cap \eta_2([y,y+\ep]) \not=\emptyset} \right] .
\eqe

For each $x,y\in  [\delta - \ep ,1-\delta ] \cap \ep\BB Z$, the square $[x,x+\ep] \times [y,y+\ep]$ is contained in $[t,t+2\ep]\times[s,s+2\ep]$ for each $t\in [x-\ep,x]$ and each $s\in [y-\ep,y]$. Therefore, the quantity~\eqref{eqn-permuton-dim-sum} is bounded above by
\allb \label{eqn-permuton-dim-int}
&\ep^{-2} \BB E\left[ \BB 1_{E_\ep}  \int_{\delta-2\ep}^{1-\delta} \int_{\delta-2\ep}^{1-\delta}  \BB 1_{\eta_1([t, t+2 \ep]) \cap \eta_2([s,s+2\ep]) \not=\emptyset}  \,ds\,dt     \right]  \notag\\
&\qquad\qquad =   \ep^{-2} \BB P\left[ E_\ep , \, \eta_1([T, T+2 \ep]) \cap \eta_2([S,S+2\ep]) \not=\emptyset \right]  ,
\alle
where $S$ and $T$ are sampled uniformly and independently from $[\delta-2\ep,1-\delta]$, independently from everything else. 

By Lemma~\ref{lem-sle-intersect-prob} (with $\zeta/2$ in place of $\zeta$), for each small enough $\ep > 0$ the right side of~\eqref{eqn-permuton-dim-int} is at most $\ep^{-1-\zeta/2}$. Hence the expected number of $\ep\times\ep$ squares needed to cover $\mcl T \cap [\delta,1-\delta]^2$ on the event $E_\ep$ is at most $\ep^{-1-\zeta/2}$. 
{From this, Markov's inequality, and the fact that $\lim_{\ep\to 0} \BB P[E_\ep] = 1$, we obtain that a.s.\ the lower Minkowski dimension of $  \mcl T \cap [\delta,1-\delta]^2  $ is at most $1+\zeta$. This implies~\eqref{eqn-permuton-dim-show}. Note that our proof does not show that the Minkowski dimension of $\mcl T$ is at most 1 since Minkowski dimension is not countably stable.} 
\end{proof}

{
\begin{remark} \label{remark-permuton-rectangle}
The dimension upper bound of Theorem~\ref{thm-permuton-dim} is not true in general for pairs of permutons constructed from space-filling curves. Indeed, for each $d \in (1,2)$, there exists a pair of space-filling curves $\eta_1 , \eta_2 : [0,1] \to [0,1]^2$ in the unit square, parametrized by Lebesgue measure, such that the following is true. If we let $\perm$ be the permuton associated with $(\eta_1,\eta_2)$ as in~\eqref{eqn-permuton-def}, then the Hausdorff dimension of $\op{supp}\perm$ is at least $d$. 

To see this, let $N \geq M \geq 3$ be odd integers. Via a straightforward iterative procedure, one can construct a space-filling curve $\eta_1 : [0,1]  \to [0,1]^2$ such that for each integer $k\geq 1$ and each $i \in \{1,\dots,(NM)^k\}$, the set $\eta_1([(i-1) (NM)^{-k} , i (NM)^{-k}])$ is an $N^{-k} \times M^{-k}$ rectangle with corners in $N^{-k} \BB Z \times M^{-k} \BB Z$. Moreover, $\eta_1$ hits Lebesgue-a.e.\ each point of $[0,1]^2$ exactly once. One can similarly construct $\eta_2$ so that it satisfies the same conditions but with $N$ and $M$ interchanged. Then for each $i,j \in \{1,\dots, (NM)^k\}$, the Lebesgue measure of $\eta_1([(i-1) (NM)^{-k} , i (NM)^{-k}]) \cap \eta_2 ([(j-1) (NM)^{-k} , j (NM)^{-k}])  $ is equal to either $N^{-2k}$ or zero. By Lemma~\ref{lem-permuton-defined} (which is valid in our setting, with the same proof),
\eqb  \label{eqn-perm-rectangle}
\perm\left( [(i-1) (NM)^{-k} , i (NM)^{-k}]) \times [(j-1) (NM)^{-k} , j (NM)^{-k}]  \right) \leq N^{-2k} .
\eqe 
To lower-bound the Hausdorff dimension of $\op{supp} \perm $, consider without loss of generality a covering $\{S_n \}_{n \geq 1}$ of $\op{supp} \perm $ by squares $S_n = [(i_n-1) (NM)^{-k_n} , i_n (NM)^{-k_n}] \times [(j_n -1)(NM)^{-k_n} , j_n (NM)^{-k_n}]$ for $k_n \in \BB N$ and $i_n , j_n \in \{1,\dots,(NM)^{k_n}\}$. Since the total mass of $\perm$ is 1, it follow from~\eqref{eqn-perm-rectangle} that
\eqbn
1\leq \sum_{n=1}^\infty N^{-2k_n} = \sum_{n=1}^\infty   2^{-\Delta/2} [\op{diam}(S_n)]^\Delta  ,\quad \text{where} \quad 
\Delta = \frac{\log N^2 }{ \log (NM )} .
\eqen 
By making $N$ sufficiently large ($M$ fixed), we can make $\Delta$ as close to 2 as we like. 
\end{remark}
}

\subsection{Proofs of LQG metric estimates}
\label{sec-lqg-metric-proofs}

To conclude the proof of Theorem~\ref{thm-permuton-dim}, it remains to prove our estimates for the LQG metric, namely Lemmas~\ref{lem-sphere-holder} and~\ref{lem-sphere-ball}. We will prove these estimates by applying known results for the whole-plane GFF, then using local absolute continuity, in the form of Lemma~\ref{lem-sphere-abs-cont}, to transfer these results to a singly marked unit-area $\gamma$-Liouville quantum sphere $(\BB C,h,\infty)$.

By the LQG coordinate change formula~\eqref{eqn-metric-measure-coord} for $\mu_h$ and for $D_h$, the statements of both lemmas do not depend on the particular choice of embedding $h$ (recall Definition~\ref{def-lqg-surface}). Hence, we can assume without loss of generality that our embedding is chosen in such a way that $(\BB C ,h , \infty , 0, 1  )$ is a triply marked unit area quantum sphere. Due to Lemma~\ref{lem-sphere-pt}, such an embedding can be obtained by sampling points $Z,W\in\BB C$ from $\mu_h$, then applying a complex affine map which takes $Z$ to 0 and $W$ to 1.  

Let
\eqb \label{eqn-new-gff}
\wt h = h^{\BB C} - \gamma\log |\cdot| ,
\eqe
where $h^{\BB C}$ is a whole-plane GFF normalized so that its average over the unit circle is zero. For $R > 0$, we define the open set
\eqb \label{eqn-annular-region}
U_R := B_R(0) \setminus \ol{B_{1/R}(1)} . 
\eqe 
By Lemma~\ref{lem-sphere-abs-cont},
\allb\label{eqn-abs-cont}
&\text{For each $R> 0$, the laws of $h|_{U_R}$ and $\wt h|_{U_R}$, viewed modulo additive constant,} \notag \\
&\text{are mutually absolutely continuous.} 
\alle  

\begin{proof}[Proof of Lemma~\ref{lem-sphere-holder}]

\noindent\textit{Step 1: estimate for the whole-plane GFF.} Fix $\zeta\in (0,1)$. 	
With $\wt h$ as in \eqref{eqn-new-gff}, let $\wt\eta$ be a whole-plane space-filling SLE$_\kappa$ from $\infty$ to $\infty$ sampled independently from $\wt h$ and then parametrized by $\gamma$-LQG mass with respect to $\wt h$. 
It is shown in~\cite[Theorem 1.4]{gs-lqg-minkowski} that\footnote{
Strictly speaking, \cite[Theorem 1.4]{gs-lqg-minkowski} considers the circle average embedding $h^\gamma$ of a $\gamma$-quantum cone instead of the field $\wt h$ of~\eqref{eqn-new-gff}. However, the statement for $\wt h$ can easily be deduced from the statement for $h^\gamma$ using the fact that the restrictions of $\wt h$ and $h^\gamma$ to the unit disk agree in law (see~\cite[Definition 4.10]{wedges} and the discussion just after) along with the scaling property of the whole-plane GFF~\eqref{eqn-gff-translate}.
}
an analog of the lemma statement holds with $\wt h$ in place of $h$. Namely, a.s.\ for each $R>0$ it holds for each small enough $\ep > 0$ (how small is random and depends on $R$ and $\zeta$) that
\eqb \label{eqn-cone-holder}
D_{\wt h}(\wt \eta(s) , \wt\eta(t)) \leq \ep^{1/d_\gamma - \zeta} ,
\quad\forall s,t \in \BB R 
\quad \text{such that}   \quad |s-t| \leq \ep 
\quad\text{and} \quad \wt\eta(s) , \wt\eta(t) \in B_{2R}(0). 
\eqe

\medskip

\noindent\textit{Step 2: absolute continuity.}
We will now deduce the analogue result as in~\eqref{eqn-cone-holder} when $\wt h$ is replaced with $h$ using an absolute continuity argument. 
Recall the internal metric $D_{\wt h}(\cdot,\cdot ; V)$ for $V\subset\BB C$ open which is defined in~\eqref{eqn-internal-metric}.
As explained just after~\eqref{eqn-internal-metric}, this metric is a measurable function of $\wt h|_V$.

If $\ep > 0$ is small enough that $\ep^{1/d_\gamma - \zeta}  < D_{\wt h}\left( U_R , \bdy U_{2 R} \right)$, then $D_{\wt h}(z,w) = D_{\wt h}(z,w ; U_{2R})$ for each $z,w\in U_R$ with $|z-w| \leq \ep^{1/d_\gamma-\zeta}$. The locality property of $D_{\wt h}$, as explained in the preceding paragraph, therefore implies that for such a choice of $\ep$, the restricted distance function 
\eqbn
\left(D_{\wt h}(z,w) : z,w\in  U_{ R} ,\: D_{\wt h}(z,w) \leq \ep^{1/d_\gamma-\zeta} \right)
\eqen
is a.s.\ determined by $\wt h|_{U_{2R}}$. Therefore, for small enough $\ep > 0$, the event that~\eqref{eqn-cone-holder} holds is determined by $\wt h|_{U_{2R}}$ and $\wt\eta$, viewed modulo time parametrization. 

By~\eqref{eqn-abs-cont}, there are random $C , \wt C  \in \BB R$ (depending on $R$ but not on $\ep$) such that the laws of $(h + C)|_{U_{2R}}$ and $(\wt h + \wt C)|_{U_{2R}}$ are mutually absolutely continuous. Adding a constant to $\wt h$ scales LQG areas and distances by a constant factor. Since $\eta$ and $\wt\eta$ agree in law modulo time parametrization, combining these facts with the conclusion of the preceding paragraph shows that~\eqref{eqn-cone-holder} also holds for $h$, i.e., a.s.\ for each $R>0$ it holds for each small enough $\ep  > 0$ that
\eqb \label{eqn-sphere-holder0}
D_{  h}( \eta(s) , \eta(t)) \leq \ep^{1/d_\gamma - \zeta} ,
\quad\forall s,t \in [0,1] \quad \text{such that} \quad |s-t| \leq \ep 
\quad\text{and}  \quad  \eta(s) ,  \eta(t) \in U_R . 
\eqe

\medskip

\noindent\textit{Step 3: transferring to $\eta([\delta,1-\delta])$.}
Let $T$ be the (a.s.\ unique) time at which $\eta$ hits 1. Almost surely, $\eta([T-\delta,T+\delta])$ contains a neighborhood of 1 and $\eta([\delta,1-\delta])$ is a compact subset of $\BB C$. Therefore, a.s.\ there exists $R > 0$ such that 
\eqbn
\eta\left( [\delta,1-\delta] \setminus [T-\delta,T+\delta] \right) \subset U_R .
\eqen
From this and~\eqref{eqn-sphere-holder0}, we obtain the following restricted version of the lemma statement. Almost surely, for each small enough $\ep > 0$, 
\eqb \label{eqn-sphere-holder1}
D_h(\eta(s) , \eta(t)) \leq \ep^{1/d_\gamma - \zeta} ,\quad\forall s , t \in [\delta,1-\delta] \setminus [T-\delta,T+\delta] \quad \text{such that} \quad |s-t| \leq \ep .
\eqe
To remove the restriction that $s,t\notin [T-\delta,T+\delta]$, we re-sample the marked point at 1. Let $S$ be sampled uniformly from $[0,1]$, independently from everything else. Since $\eta$ is parametrized by $\mu_h$-mass, the conditional law of $\eta(S)$ given $(h,\eta)$ is that of a sample from $\mu_h$. By Lemma~\ref{lem-sphere-pt}, the triply marked quantum surface $(\BB C ,h , \infty,  0, \eta(S)  )$ is a triply marked unit area quantum sphere.
Hence, if $\phi : \BB C\rta \BB C$ is the complex affine map which fixes zero and satisfies $\phi(1) = \eta(S)$, then $( h\circ \phi + Q\log|\phi'| , \phi^{-1} \circ \eta) \eqD (h,\eta)$. 
By~\eqref{eqn-sphere-holder1} with $( h\circ \phi + Q\log|\phi'| , \phi^{-1} \circ \eta)$ instead of $(h,\eta)$ together with the LQG coordinate change formula for $\mu_h$ and $D_h$~\eqref{eqn-metric-measure-coord}, we get that~\eqref{eqn-sphere-holder1} also holds with $S$ instead of $T$. Since $S$ is uniformly distributed on $[0,1]$ and is independent from $h$ and $\eta$, the version of~\eqref{eqn-sphere-holder1} with $S$ instead of $T$ implies the lemma statement. 
\end{proof}
 
\begin{remark}
At the end of the proof of Lemma~\ref{lem-sphere-holder}, we could have instead argued that the conditional law of $T$ given the curve-decorated quantum surface $(\BB C ,h , \infty , 0, \eta)$ is uniform on $[0,1]$, but we found it easier to explain the argument by introducing $S$. 
\end{remark}

\begin{proof}[Proof of Lemma~\ref{lem-sphere-ball}]
By~\cite[Theorem 1.1]{afs-metric-ball},\footnote{The statement of~\cite[Theorem 1.1]{afs-metric-ball} is given for a whole-plane GFF without a log singularity. However, the proof still works for a whole-plane GFF plus $\gamma \log |\cdot|^{-1}$. Alternatively, the statement for a whole-plane GFF plus $\gamma \log |\cdot|^{-1}$ can be deduced by applying the statement for a whole-plane GFF in logarithmically many dyadic annuli centered at the origin and taking a union bound.} 
we have the following analog of the lemma statement with the field $\wt h$ from \eqref{eqn-new-gff} in place of $h$. For each $R > 0$, it is a.s.\ the case that for each small enough $r  > 0$, 
\eqb \label{eqn-cone-ball}
r^{d_\gamma + \zeta} \leq \mu_{\wt h}\left(\mcl B_r(z;D_h) \right) \leq r^{d_\gamma -\zeta} , \quad\forall z \in B_R(0) .
\eqe 
By a similar absolute continuity argument as in the proof of Lemma~\ref{lem-sphere-holder}, we deduce from~\eqref{eqn-cone-ball} that for each $R>0$, it is a.s.\ the case that for each small enough $r > 0$, the estimate~\eqref{eqn-cone-ball} holds with $h$ in place of $\wt h$. 

The law of the triply marked quantum sphere $(\BB C ,h , \infty , 0,1 )$ is invariant under interchanging the marked points at 0 and $\infty$ (this follows, e.g., from Lemma~\ref{lem-sphere-pt}). Hence, $h(1/\cdot) \eqD h$. We can therefore deduce from~\eqref{eqn-cone-ball} with $h$ in place of $\wt h$ that for each $R>0$, a.s.\ for each small enough $r>0$, 
\eqb \label{eqn-sphere-ball-invert}
r^{d_\gamma + \zeta} \leq \mu_h\left(\mcl B_r(z;D_h) \right) \leq r^{d_\gamma -\zeta} , \quad\forall z \in \BB C\setminus B_{1/R}(0) .
\eqe 
Combining~\eqref{eqn-cone-ball} (with $h$ in place of $\wt h$) and~\eqref{eqn-sphere-ball-invert} gives the lemma statement.  
\end{proof}

\section{Re-rooting invariance}
\label{sec-re-root-proof}

In this section we will prove the re-rooting invariance property for permutons constructed from $\gamma$-LQG with $\gamma\in (0,2)$ decorated by two independent SLE$_8$ curves, as given by Theorem~\ref{thm-permuton-re-root}.

\subsection{Re-rooting invariance for SLE$_8$}
\label{sec-sle-re-root}

The main input in the proof of Theorem~\ref{thm-permuton-re-root} is a re-rooting property for SLE$_8$. To state this property, we make the following definition.  
 
\begin{defn}[Re-rooting a curve] \label{defn-curve-re-root}
Let $P : [0,1] \rta \BB C \cup \{\infty\}$ be a curve in the Riemann sphere with $P(0) = P(1)$ and $P([0,1]) = \BB C\cup \{\infty\}$. 
For $z\in \BB C$, let $\tau_z$ be the first time at which $P$ hits $z$. 
We define the \textbf{re-rooting} of $P$ at $z$ to be the curve $P^z : [0,1]\rta \BB C\cup\{\infty\}$ obtained by concatenating $P|_{[\tau_z, 1]}$ followed by $P|_{[0,\tau_z]}$. That is,
\eqb  \label{eqn-curve-re-root}
P^z(t) := \begin{cases}
P(t + \tau_z) ,\quad &\text{if $t\in [0, 1-\tau_z]$} ,\\
P(t - (1-\tau_z) ) ,\quad &\text{if $t \in [ 1-\tau_z ,1]$}  .
\end{cases}
\eqe 
\end{defn}

If we think of $P$ as a space-filling loop based at the point $P(0) = P(1)$, then $P^z$ is the same loop with the base point taken to be $z$ instead. 
The following lemma tells us that whole-plane space-filling SLE$_\kappa$ is re-rooting invariant if and only if $\kappa=8$.

\begin{prop}[Re-rooting invariance for SLE$_8$] \label{prop-sle-re-root}
Let $\kappa > 4$ and let $\eta$ be a whole-plane space-filling SLE$_\kappa$ from $ \infty$ to $\infty$. Assume $\eta$ is parametrized by the interval $[0,1]$, so that $\eta: [0,1] \rta \BB C \cup \{\infty\}$. 
If $\kappa = 8$, then for any $z\in\BB C$ the re-rooted curve $\eta^z$ has the law of a whole-plane space-filling SLE$_\kappa$ from $z$ to $z$, viewed modulo time parametrization. Equivalently, if $\phi^z$ is a fractional linear transformation taking $z$ to $\infty$ (i.e., $\phi^z(w) = a/(z-w)$ for some non-zero $a\in\BB C$), then $\phi^z\circ \eta^z$ has the same law as $\eta$, viewed modulo time parametrization. 
If $\kappa \not=8$, then this property is not true for any $z\in\BB C$. 
\end{prop}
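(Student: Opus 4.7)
The plan is to prove the positive direction ($\kappa=8$) via the mating-of-trees (MOT) coupling of whole-plane space-filling SLE with a $\gamma$-LQG quantum cone, exploiting the fact that the left/right LQG boundary length processes are independent precisely when $\gamma=\sqrt{2}$ (equivalently $\kappa=8$), and to address the negative direction by producing an observable of the curve that detects the asymmetry between starting and interior points.

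For $\kappa=8$, I would introduce an auxiliary $\gamma=\sqrt{2}$ quantum cone $(\BB C,h,0,\infty)$ sampled independently of $\eta$, and re-parametrize $\eta$ by $\mu_h$-mass so that $\eta(0)=0$. By the mating-of-trees theorem of Duplantier-Miller-Sheffield, the associated left and right LQG boundary length processes $(L_t,R_t)_{t\in\BB R}$ form a pair of \emph{independent} two-sided Brownian motions (the correlation coefficient $-\cos(\pi\gamma^2/4)$ vanishes precisely at $\gamma^2=2$), and $(L,R)$ a.s.\ determines the curve-decorated quantum surface $(\BB C,h,0,\eta)$ up to rotation. For fixed $t_0\in\BB R$, the time-shifted pair $(L_{t_0+\cdot}-L_{t_0},R_{t_0+\cdot}-R_{t_0})$ has the same joint law as $(L,R)$ by stationarity of Brownian motion, so the MOT correspondence applied to the shifted pair yields a re-embedded re-rooted curve-decorated surface $(\BB C,\tilde h,0,\tilde\eta)$ with the same joint law as $(\BB C,h,0,\eta)$. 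Projecting onto the marginal of the curve---and using that independence of $h$ and $\eta$ in our setup makes the marginal of $\tilde\eta$ modulo time parametrization coincide with the whole-plane space-filling SLE$_8$ law---gives re-rooting invariance at the random point $\eta(t_0)$. To upgrade this to deterministic $z\in\BB C$, I would use translation and scale invariance of the whole-plane SLE$_8$ law combined with the fact that $\mu_h$ has full support on $\BB C$.

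For $\kappa\neq 8$, the analogous MOT coupling with a $\gamma\neq\sqrt{2}$ quantum cone produces a \emph{correlated} pair $(L,R)$. Although stationarity of $(L,R)$ still yields re-rooting invariance of the coupled curve-decorated surface, the nontrivial correlation between $L$ and $R$ obstructs the marginalization argument used above. I would complete the negative direction by identifying a small-$\varepsilon$ asymptotic observable---for instance, the joint conformal type of the left and right outer boundaries of $\eta$ stopped just after its starting neighborhood versus at a $\mu_h$-typical interior point---that takes a strictly different form at the starting point than at an interior point whenever $\kappa\neq 8$, thereby ruling out re-rooting invariance.

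The main obstacle will be the marginalization step in the positive direction: one must argue cleanly that re-rooting invariance of the MOT-coupled curve-decorated surface (which by stationarity of $(L,R)$ holds for every matched $\gamma$) descends to re-rooting invariance of the curve alone modulo time parametrization, and this descent leverages specifically the independence of $L$ and $R$ at $\gamma=\sqrt{2}$, which is absent for any other value of $\gamma$ and is precisely what distinguishes $\kappa=8$ from other values of $\kappa$.
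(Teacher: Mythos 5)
Your approach is genuinely different from the paper's, which proves the proposition via the whole-plane-to-chordal decomposition of space-filling SLE (through the flow lines $\beta^L,\beta^R$ emanating from the re-rooting point) and then invokes reversibility of whole-plane SLE$_{16/\kappa}(2-16/\kappa)$, of chordal SLE$_{16/\kappa}(-8/\kappa;-8/\kappa)$, and crucially of chordal space-filling SLE$_\kappa$ (which holds if and only if $\kappa=8$). Unfortunately your mating-of-trees route has a structural gap that I don't see how to repair. The quantum-cone time-shift invariance of $(L_t,R_t)_{t\in\BB R}$ gives re-rooting invariance of the curve-decorated surface \emph{at an interior marked point}: time-shifting by $t_0$ moves the marked point from $\eta(0)=0$ to $\eta(t_0)$, but the point $\infty$ (where $t\to\pm\infty$) stays the base point of the loop. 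The proposition instead asserts invariance under \emph{swapping the root $\infty$ with an interior point $z$}, which is an inversion of the sphere. The quantum cone is not inversion-symmetric between its two marked points (it has a $\gamma$-log singularity at $0$ and a different behavior at $\infty$), so the time-shift symmetry never produces the $\infty\leftrightarrow z$ swap that the proposition requires. The paper handles this swap explicitly via the fractional linear transformation $\phi(w)=a/w$ and chordal reversibility; your plan has no analogue of this ingredient.

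There are two further problems. First, your claim that the independence of $L$ and $R$ at $\gamma=\sqrt2$ is what distinguishes $\kappa=8$ is not correct: stationarity of the two-sided Brownian motion pair holds for \emph{every} correlation, so if the rest of your argument were valid it would prove re-rooting invariance for every $\kappa>4$, contradicting the negative half of the statement. The correlation coefficient is irrelevant here; what is special about $\kappa=8$ is the reversibility of chordal space-filling SLE$_\kappa$, which does not appear in your proposal. Second, the marginalization step (passing from equality in law of curve-decorated quantum surfaces to equality in law of the curves alone, at a \emph{deterministic} $z$) requires a careful disintegration argument -- the conformal re-embedding map is a random function of $(h,\eta)$, and the re-rooting point $\eta(t_0)$ is itself $h$-dependent -- which you leave unaddressed. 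Your treatment of the negative direction is a plan rather than an argument; by contrast the paper's negative direction is immediate from the failure of chordal space-filling SLE$_\kappa$ reversibility for $\kappa\neq8$, since the flow-line decomposition expresses the law bijectively in terms of the boundary curves and the chordal space-filling pieces.
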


As our proof will show, the reason why the symmetry property of Proposition~\ref{prop-sle-re-root} holds only for $\kappa=8$ is that chordal space-filling SLE$_\kappa$ is reversible if and only if $\kappa=8$~\cite[Theorem 1.19]{ig4}.

\begin{proof}[Proof of Proposition~\ref{prop-sle-re-root}]
By the construction of space-filling SLE from flow lines of the whole-plane Gaussian free field (Section~\ref{sec-sle}), together with the translation invariance of the law of the whole-plane GFF, viewed modulo additive constant~\eqref{eqn-gff-translate}, the law of $\eta$, viewed modulo time parametrization, is invariant under spatial translation. We can therefore assume without loss of generality that $z=0$. 

In~\cite[Section 1.2.3]{ig4} the authors construct a chordal version of space-filling SLE$_\kappa$ between two marked boundary points of a simply connected domain via a similar construction to the one described in Section~\ref{sec-sle}.  
As explained in~\cite[Footnote 4]{wedges}, one can construct the whole-plane space-filling SLE$_\kappa$ from the chordal version as follows. See Figure~\ref{fig-space-filling-def} for an illustration.  
\begin{itemize}
\item Sample a pair of whole-plane SLE$_{16/\kappa}(2-16/\kappa)$ curves $\beta^L ,\beta^R$ from 0 to $\infty$, coupled together so that they can be realized as the flow lines of angle $\pi/2$ and $-\pi/2$ for the whole-plane GFF. Equivalently, first sample a whole-plane SLE$_{16/\kappa}(2-16/\kappa)$ curve $\beta^L$ from 0 to $\infty$; then, conditional on $\beta^L$, sample a chordal SLE$_{16/\kappa}(-8/\kappa ; -8/\kappa)$ from 0 to $\infty$ in the complement of the first curve.\footnote{
Whole-plane SLE$_{16/\kappa}(\rho)$ and chordal SLE$_\kappa(\rho^L;\rho^R)$ for $\rho,\rho^L,\rho^R >-2$ are variants of SLE$_{16/\kappa}$. We will not need the precise definitions of these processes here. We refer to~\cite[Section 2.2]{ig4} and~\cite[Section 2.2]{ig1}, respectively, for more background on these processes.
}
\item The curves $\beta^L$ and $\beta^R$ divide space into two open domains: $U_-$, which lies to the left and $\beta^L$ and to the right of $\beta^R$, and $U_+$ which lies to the right of $\beta^L$ and to the left of $\beta^R$. 
\item If $\kappa\geq 8$, since $\beta^L$ and $\beta^R$ do not hit each other except at their starting point, the regions $U_-$ and $U_+$ are simply connected. Conditional on $\beta^L$ and $\beta^R$, let $\eta_-$ (resp.\ $\eta_+$) be a chordal SLE$_\kappa$ from 0 to $\infty$ in $U_-$ (resp.\ $U_+$). Let $\eta$ be the concatenation of the time reversal of $\eta_-$ followed by $\eta_+$. 
\item If $\kappa\in (4,8)$, then $\beta^L$ and $\beta^R$ intersect in an uncountable totally disconnected set which is hit in the same order by $\beta^L$ and $\beta^R$. In this case, each of $U_-$ and $U_+$ consists of a countable ordered string of connected components (``beads''), each of which has one boundary arc which is part of $\beta^L$ and one boundary arc which is part of $\beta^R$. Each bead has two marked boundary points, where these two arcs meet. Conditional on $\beta^L$ and $\beta^R$, let $\eta_-$ be a curve from 0 to $\infty$ in $\ol U_-$ which consists of the concatenation of conditionally independent chordal space-filling SLE$_\kappa$ curves between the marked points of the beads of $U_-$. Similarly define $\eta_+$ in $\ol U_+$. Then, let $\eta$ be the concatenation of the time reversal of $\eta_-$ followed by $\eta_+$. 
\end{itemize}
 
\begin{figure}[ht!]
\begin{center}
\includegraphics[width=.8\textwidth]{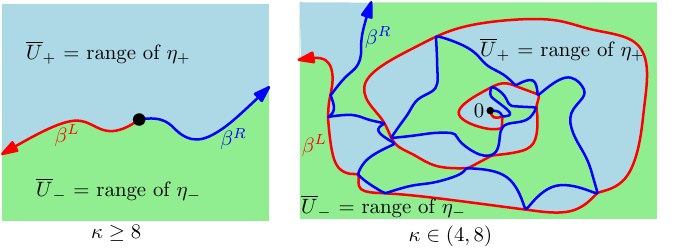}  
\caption{\label{fig-space-filling-def} Illustration of the construction of whole-plane space-filling SLE from the chordal version used in the proof of Proposition~\ref{prop-sle-re-root}, in the cases $\kappa\geq 8$ and $\kappa \in (4,8)$.}
\end{center}
\vspace{-3ex}
\end{figure}
 
The re-rooted curve $\eta^0$ is obtained by concatenating $\eta_+$ followed by the time reversal of $\eta_-$. Let $\phi : \BB C\rta\BB C$ be a fractional linear transformation taking 0 to $\infty$ (i.e., $\phi(w) = a/w$ for some non-zero $a\in\BB C$). We will now argue that when $\kappa = 8$, the curve $\phi\circ\eta^0$ agrees in law with $\eta$ modulo time parametrization. 
\begin{itemize}
\item By the reversibility of whole-plane SLE$_2$~\cite[Theorem 1.20]{ig4} followed by the reversibility of chordal SLE$_2(-1;-1)$~\cite[Theorem 1.1]{ig2}, the time reversals of $\phi \circ \beta^L  $ and $ \phi \circ \beta^R $ have the same joint law as $(\beta^L , \beta^R)$, viewed as curves modulo time parametrization. In particular, $(\phi(U_+) , \phi(U_-)) \eqD (U_- , U_+)$. 
\item By the reversibility of chordal SLE$_8$~\cite[Theorem 1.19]{ig4}, the conditional law given $(\beta^L,\beta^R)$ of the time reversals of $ \phi\circ\eta_- $ and $ \phi\circ\eta_+ $ is that of a pair of conditionally independent chordal SLE$_8$'s from 0 to $\infty$ in $\phi(U_-)$ and $\phi(U_+)$, respectively.
\item Consequently, concatenating $\phi\circ\eta_+$ followed by the time reversal of $\phi\circ\eta_-$ gives us a new curve with the same law as $\eta$, viewed modulo time parametrization. 
\end{itemize}

When $\kappa\not=8$, whole-plane SLE$_{16/\kappa}(2-16/\kappa )$ and chordal SLE$_{16/\kappa}(-8/\kappa ; -8/\kappa)$ are still reversible (by the same references), but chordal space-filling SLE$_\kappa$ is not~\cite[Theorem 1.19]{ig4}. Hence, the exact same argument as above shows that $\phi\circ\eta^0$ does not agree in law with $\eta$ modulo time parametrization when $\kappa \not= 8$. 
\end{proof}

\subsection{Re-rooting invariance for SLE$_8$-decorated $\gamma$-LQG}
\label{sec-sle-lqg-re-root}

Using Proposition~\ref{prop-sle-re-root}, we can obtain the following similar symmetry property for SLE$_8$-decorated LQG. The property is most easily stated in terms of curve-decorated LQG surfaces, which we now define building on Definition~\ref{def-lqg-surface}

\begin{defn} \label{def-curve-decorated}
Let $\gamma\in (0,2)$. A \textbf{$\gamma$-LQG surface with a single marked point, decorated by two curves} is an equivalence class of 5-tuples $(U,h,z, \eta_1,\eta_2)$, where 
\begin{itemize}
\item $U\subset\BB C$ is open and $z \in U\cup \bdy U$, with $\bdy U$ viewed as a collection of prime ends;
\item $h$ is a generalized function on $U$ (which we will always take to be some variant of the Gaussian free field);
\item $\eta_1 : [a_1,b_1] \rta U \cup \bdy U$ and $\eta_2 : [a_2,b_2] \rta U\cup \bdy U$ are curves in $U \cup \bdy U$;
\item $(U,h,z,\eta_1,\eta_2)$ and $(\wt U, \wt h,\wt z, \wt\eta_1 ,\wt\eta_2)$ are declared to be equivalent if there is a conformal map $\phi : \wt U \rta U$ such that
\eqbn
\wt h = h\circ \phi + Q\log|\phi'| ,\quad \phi(\wt z )  = z  ,\quad \phi\circ\wt\eta_1=\eta_1 \quad \text{and} \quad \phi\circ\wt\eta_2=\eta_2
\eqen
where $Q$ is as in~\eqref{eqn-Q}.
\end{itemize} 
\end{defn}

\begin{prop}[Re-rooting invariance for SLE$_8$-decorated $\gamma$-LQG] \label{prop-sle-lqg-re-root}
Let $\gamma\in (0,2)$ and $\kappa_1,\kappa_2 > 4$. Let $(\BB C , h , \infty)$ be a singly marked unit-area quantum sphere. Also let $\eta_1 , \eta_2$ be two independent whole-plane space-filling SLE curves from $\infty$ to $\infty$, with parameters $\kappa_1,\kappa_2 $, respectively, sampled independently from $h$ and then parametrized by $\gamma$-LQG mass with respect to $h$. Fix $t\in [0,1]$. If $\kappa_1=\kappa_2 = 8$ (and $\gamma\in (0,2)$ is arbitrary), then we have the re-rooting invariance property 
\eqb \label{eqn-sle-lqg-re-root}
\left( \BB C , h , \eta_1(t) , \eta_1^{\eta_1(t)} , \eta_2^{\eta_1(t)} \right) \eqD \left(\BB C ,h , \infty , \eta_1,\eta_2 \right) 
\eqe
as curve-decorated quantum surfaces. This property is not true if either $\kappa_1$ or $\kappa_2$ is not equal to 8. 
\end{prop}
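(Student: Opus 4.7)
The plan is to combine Proposition~\ref{prop-sle-re-root} (re-rooting invariance for whole-plane space-filling SLE$_8$) with Lemma~\ref{lem-sphere-pt} (resampling invariance of the marked point of the quantum sphere under fresh samples from $\mu_h$). The restriction $\kappa_1 = \kappa_2 = 8$ in the positive direction arises precisely because Proposition~\ref{prop-sle-re-root} fails when $\kappa \neq 8$.

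First, I would establish the statement at the level of the quantum surface alone, namely $(\BB C, h, \eta_1(t)) \eqD (\BB C, h, \infty)$ as singly marked quantum surfaces. Since $\eta_1$ is independent of $h$ and parametrized by $\mu_h$-mass, the conditional law of $\eta_1(t)$ given $h$ is $\mu_h$. Lemma~\ref{lem-sphere-pt} therefore implies that $(\BB C, h, \infty, \eta_1(t))$ is a doubly marked unit-area quantum sphere, whose law is invariant under permuting the two marked points. Forgetting $\infty$ yields the claimed equivalence. Equivalently, choosing any M\"obius transformation $\phi_t$ sending $\eta_1(t)$ to $\infty$ (e.g., $\phi_t(w) = 1/(w - \eta_1(t))$), the pushed-forward field $\tilde h := h \circ \phi_t^{-1} + Q\log|(\phi_t^{-1})'|$ has the same distribution as $h$, modulo the complex affine ambiguity in the embedding of a singly marked quantum sphere at $\infty$.

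Next, I would handle the SLE curves by conditioning on $\eta_1$. Given $\eta_1$, the point $z := \eta_1(t)$ is deterministic and $\eta_2$ remains an independent whole-plane space-filling SLE$_8$ from $\infty$ to $\infty$. Proposition~\ref{prop-sle-re-root} applied conditionally then gives that $\phi_t \circ \eta_2^{\eta_1(t)}$, viewed modulo time parametrization, has the law of a whole-plane space-filling SLE$_8$ from $\infty$ to $\infty$; since this conditional law does not depend on $\eta_1$, it is unconditionally independent of $\eta_1$. Applying Proposition~\ref{prop-sle-re-root} directly to $\eta_1$ gives the same conclusion for $\phi_t \circ \eta_1^{\eta_1(t)}$. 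Hence, modulo parametrization, the pair $(\phi_t \circ \eta_1^{\eta_1(t)}, \phi_t \circ \eta_2^{\eta_1(t)})$ is distributed as a pair of independent whole-plane space-filling SLE$_8$ curves from $\infty$ to $\infty$. Re-rooting merely shifts time cyclically, so each curve is still parametrized by $\mu_h$-mass; by the coordinate change formula~\eqref{eqn-metric-measure-coord}, the re-embedded curves are parametrized by $\mu_{\tilde h}$-mass.

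Combining these two steps, the embedded 5-tuple $(\BB C, \tilde h, \infty, \phi_t \circ \eta_1^{\eta_1(t)}, \phi_t \circ \eta_2^{\eta_1(t)})$ has the same joint distribution as $(\BB C, h, \infty, \eta_1, \eta_2)$, which yields~\eqref{eqn-sle-lqg-re-root}. The main technical subtlety is tracking independence between the field and the curves through the \emph{random} re-embedding by $\phi_t$, which depends on both $h$ and $\eta_1$ through $\eta_1(t)$; the key point is that the laws of the SLE curves modulo time parametrization are invariant under the relevant M\"obius operations (translation, scaling, and inversion), so the product structure of the joint law survives. For the converse, if~\eqref{eqn-sle-lqg-re-root} held with $\kappa_i \neq 8$ for some $i$, then combining it with the measurability (left to future work in the paper) of $\eta_i$ modulo conformal and anticonformal automorphisms of $\BB C$ as a function of the curve-decorated surface would yield re-rooting invariance for SLE$_{\kappa_i}$ alone, contradicting Proposition~\ref{prop-sle-re-root}.
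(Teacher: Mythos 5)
Your argument has a genuine gap, and it is exactly the subtlety that the paper's proof is structured around.

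The central error is the claim that, for a \emph{fixed} $t\in[0,1]$, the conditional law of $\eta_1(t)$ given $h$ is $\mu_h$. This is false. If $T$ is sampled uniformly from $[0,1]$ independently of everything, then indeed the conditional law of $\eta_1(T)$ given $(h,\eta_1)$ is $\mu_h$ (since $\eta_1$ is parametrized by $\mu_h$-mass), and this is what makes Lemma~\ref{lem-sphere-pt} applicable. But for a deterministic $t$, the distribution of $\eta_1(t)$ depends on $t$: for instance $\eta_1(0)=\eta_1(1)=\infty$ deterministically, and for small $t$ the point $\eta_1(t)$ is concentrated near $\infty$, which is certainly not a sample from $\mu_h$. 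The same issue resurfaces when you ``apply Proposition~\ref{prop-sle-re-root} directly to $\eta_1$'': that proposition concerns re-rooting at a point $z$ that is deterministic, or more generally independent of $\eta_1$. Here $\eta_1(t)$ is highly dependent on $\eta_1$, so the proposition does not apply as stated. Your argument would be correct if $t$ were replaced by an independent uniform time $T$, and in that form it is essentially the paper's Lemma~\ref{lem-sle-lqg-re-root0}.

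The missing idea is the step from the random-time statement to the fixed-time statement. The paper does this via a composition argument: defining the re-rooting operator $\mcl R_t$ on curve-decorated surfaces and observing that $\mcl R_s\circ\mcl R_t = \mcl R_{[s+t]}$ (where $[s+t]$ denotes addition modulo 1), one applies the random-time invariance with time $T$ and with time $[t+T]$ (both uniform), then cancels the $\mcl R_T$ to extract $\mcl R_t$-invariance for fixed $t$. Without this step, your proof proves only the weaker random-time statement. For the converse direction, your sketch leans on an unproven measurability claim deferred to future work; the paper's route is simpler: fixed-time invariance implies random-time invariance by averaging, and the random-time version is shown to fail when some $\kappa_i\neq 8$ in Lemma~\ref{lem-sle-lqg-re-root0}, so the fixed-time version must fail too.
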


Before giving the proof of Proposition~\ref{prop-sle-lqg-re-root}, we explain why it implies Theorem~\ref{thm-permuton-re-root}.

\begin{proof}[Proof of Theorem~\ref{thm-permuton-re-root} assuming Proposition~\ref{prop-sle-lqg-re-root}] 
For $t \in [0,1]$, define the permuton $\perm_t$ as in the theorem statement. Almost surely, $\eta_1(t)$ it not a multiple point for $\eta_1$. Furthermore, by Lemma~\ref{lem-permuton-unique}, a.s.\ $\psi(t)$, as defined in the theorem statement, is the unique time at which $\eta_2$ hits $\eta_1(t)$. Hence, by Definition~\ref{defn-curve-re-root} of the re-rooting operation,
\eqb \label{eqn-curves-shift}
\eta_1(u) = \eta_1^{\eta_1(t)}\left( u - t - \lfloor u- t \rfloor \right) \quad \text{and} \quad
\eta_2(u) = \eta_2^{\eta_1(t)}\left( u - \psi(t) - \lfloor u- \psi(t) \rfloor \right) ,\quad \forall u \in [0,1] .
\eqe 
By~\eqref{eqn-curves-shift}, Lemma~\ref{lem-permuton-defined}, and the definition of $\perm_t$, a.s.\ for every rectangle $[a,b]\times[c,d] \subset [0,1]^2$, 
\eqb
\perm_t\left( [a,b] \times[c,d] \right) = \mu_h\left( \eta_1^{\eta_1(t)}\left( [a,b] \right) \cap \eta_2^{\eta_1(t)}\left( [c,d] \right) \right) .
\eqe
By Proposition~\ref{prop-sle-lqg-re-root}, we therefore have $\perm_t \eqD \perm$. 
\end{proof}

It remains to prove Proposition~\ref{prop-sle-lqg-re-root}. 
We first prove the re-rooting invariance property of Proposition~\ref{prop-sle-lqg-re-root} when we re-root at a uniformly random time, rather than a deterministic time. This re-rooting invariance property is easier to prove because if $T \in [0,1]$ is sampled uniformly from Lebesgue measure, independently from everything else, then $\eta(T)$ is a sample from $\mu_h$, so we can apply Lemma~\ref{lem-sphere-pt} and Proposition~\ref{prop-sle-re-root}. 

\begin{lem}[Re-rooting invariance for SLE$_8$-decorated $\gamma$-LQG, random time] \label{lem-sle-lqg-re-root0}
Let $(\BB C ,h , \infty)$, $\eta_1$, and $\eta_2$ be as in Proposition~\ref{prop-sle-lqg-re-root}. 
Let $T $ be sampled uniformly from Lebesgue measure on $[0,1]$, independently from everything else. If $\kappa_1=\kappa_2 = 8$ (and $\gamma\in (0,2)$ is arbitrary), then  
\eqb \label{eqn-sle-lqg-re-root0}
\left( \BB C , h , \eta_1(T) , \eta_1^{\eta_1(T)} , \eta_2^{\eta_1(T)} \right) \eqD \left(\BB C ,h , \infty , \eta_1,\eta_2 \right) 
\eqe
as curve-decorated quantum surfaces. This property is not true if either $\kappa_1$ or $\kappa_2$ is not equal to 8. 
\end{lem}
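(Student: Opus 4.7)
The plan starts from the key observation that the random point $Z := \eta_1(T)$ has, conditionally on $(h, \eta_1, \eta_2)$, the law of a sample from $\mu_h$: indeed $\mathrm{Leb}\{t \in [0,1] : \eta_1(t) \in A\} = \mu_h(A)$ for every Borel $A \subset \BB C$ because $\eta_1$ is parametrized by $\mu_h$-mass, and $T$ is independent uniform. Since this conditional law depends only on $h$, the point $Z$ is conditionally independent of the pair $(\eta_1, \eta_2)$ viewed modulo time parametrization, given $h$. Consequently the joint distribution of $h$, $Z$, and the curves $\eta_1, \eta_2$ viewed modulo parametrization admits the following description: sample the singly marked unit area quantum sphere $(\BB C, h, \infty)$; conditionally on $h$ draw $Z \sim \mu_h$; and, independently of $(h, Z)$, sample $\eta_1$ and $\eta_2$ as two independent whole-plane space-filling SLE$_{\kappa_1}$ and SLE$_{\kappa_2}$ curves from $\infty$ to $\infty$, viewed modulo parametrization.

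Now assume $\kappa_1 = \kappa_2 = 8$. I combine two ingredients. First, applying Lemma~\ref{lem-sphere-pt} in its singly marked form (obtained by forgetting two of the three marked points in the triply marked statement) yields that $(\BB C, h, Z)$ is itself a singly marked unit area quantum sphere. Second, it follows directly from Definition~\ref{defn-curve-re-root} that re-rooting at a point commutes with monotone re-parametrization of the input curve, so the re-rooted curve $\eta_i^Z$ viewed modulo parametrization is a measurable function of $\eta_i$ modulo parametrization and of $Z$ alone. By Proposition~\ref{prop-sle-re-root}, for each fixed $z \in \BB C$ the curve $\eta_i^z$ modulo parametrization has the law of a whole-plane space-filling SLE$_8$ from $z$ to $z$; combining this with the conditional independence identified in the first paragraph, via Fubini, yields that conditionally on $(h, Z)$ the pair $(\eta_1^Z, \eta_2^Z)$ modulo parametrization consists of two conditionally independent whole-plane space-filling SLE$_8$ curves from $Z$ to $Z$. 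Re-parametrizing each by $\mu_h$-mass and applying any fractional linear transformation sending $Z$ to $\infty$, which preserves the curve-decorated quantum surface equivalence class of Definition~\ref{def-curve-decorated}, I obtain a tuple distributionally equal to $(\BB C, h, \infty, \eta_1, \eta_2)$, establishing~\eqref{eqn-sle-lqg-re-root0}.

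For the final assertion I argue by contradiction. Suppose~\eqref{eqn-sle-lqg-re-root0} held for some $\kappa_i \neq 8$. Choosing a measurable family of fractional linear transformations $\phi^w$ sending the marked point $w$ on each side of~\eqref{eqn-sle-lqg-re-root0} to $\infty$, the equation would imply the equality in distribution of the pair $(\phi^Z \circ \eta_1^Z, \phi^Z \circ \eta_2^Z)$ modulo parametrization with $(\eta_1, \eta_2)$ modulo parametrization. By combining the conditional independence from the first paragraph with the assumed independence of $h$ and $(\eta_1, \eta_2)$ modulo parametrization, the point $Z$ is actually unconditionally independent of the curves modulo parametrization; hence conditioning on $Z = z$ would give $\phi^z \circ \eta_i^z \eqD \eta_i$ modulo parametrization for $\BB E[\mu_h]$-a.e.\ $z \in \BB C$. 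The translation and scaling invariance of the law of $\eta_i$ would then force this equality to hold for every $z \in \BB C$, contradicting the final claim of Proposition~\ref{prop-sle-re-root}. The main subtle point throughout is the transition from the deterministic-$z$ re-rooting statement of Proposition~\ref{prop-sle-re-root} to the random-$Z$ version, which rests essentially on the conditional independence of $Z$ from the curves modulo parametrization given $h$ that was identified at the outset.
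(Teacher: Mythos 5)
Your proof is correct and follows essentially the same route as the paper's: you identify $Z = \eta_1(T)$ as a $\mu_h$-sample independent of $(\eta_1,\eta_2)$ modulo parametrization, invoke Lemma~\ref{lem-sphere-pt} to re-mark the quantum sphere at $Z$, and apply Proposition~\ref{prop-sle-re-root} conditionally on $(h,Z)$ to handle both directions.
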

\begin{proof}
Write $Z = \eta_1(T)$. Since $\eta_1$ is parametrized by $\gamma$-LQG mass $\mu_h$ with respect to $h$, the point $Z$ is a uniformly sample from $\mu_h$, independent from $\eta_1$ and $\eta_2$. Since the marked point for a single marked quantum sphere is sampled uniformly from its LQG area measure (Lemma~\ref{lem-sphere-pt}), we have $(\BB C , h , Z) \eqD (\BB C , h  , \infty)$ as quantum surfaces. 

Since $(h,Z)$ is independent from $\eta_1$ and $\eta_2$, viewed modulo time parametrization, Proposition~\ref{prop-sle-re-root} shows that when $\kappa_1=\kappa_2=8$, the conditional law of $(\eta_1^Z,\eta_2^Z)$ given $(h,Z)$ is that of a pair of conditionally independent whole-plane SLE$_8$ curves from $Z$ to $Z$. Moreover, these curves are parametrized by $\mu_h$-mass. Therefore,~\eqref{eqn-sle-lqg-re-root0} holds when $\kappa_1=\kappa_2=8$. 

If, say, $\kappa_1\not=8$, then Proposition~\ref{prop-sle-re-root} shows that the conditional law of $ \eta_1^Z $ given $(h,Z)$ is \emph{not} that of a whole-plane space-filling SLE$_{\kappa_1}$ from $Z$ to $Z$. Therefore,~\eqref{eqn-sle-lqg-re-root0} does not hold if $\kappa_1 \not=8$. The same argument also works if $\kappa_2\not=8$. 
\end{proof}

\begin{proof}[Proof of Proposition~\ref{prop-sle-lqg-re-root}]
The re-rooting invariance property~\eqref{eqn-sle-lqg-re-root} at a fixed deterministic time immediately implies the re-rooting invariance property~\eqref{eqn-sle-lqg-re-root0} at a time $T$ sampled uniformly from $[0,1]$. Hence, Lemma~\ref{lem-sle-lqg-re-root0} implies that~\eqref{eqn-sle-lqg-re-root} does not hold if either $\kappa_1$ or $\kappa_2$ is not equal to 8. We need to prove that~\eqref{eqn-sle-lqg-re-root} holds when $\kappa_1=\kappa_2=8$. 

Consider the space of 5-tuples $(\BB C ,\frk h , z , P_1,P_2)$ where $\frk h$ is a generalized function, $z\in \BB C\cup \{\infty\}$, and $P_1,P_2 : [0,1]\rta \BB C\cup \{\infty\}$ are curves such that $P_1(0) = P_1(1)$, $P_1([0,1]) = \BB C\cup\{\infty\}$, and the same is true for $P_2$.
For $t\in [0,1]$, we define the re-rooting operator on this space of 5-tuples by 
\eqb
\mcl R_t \left(\BB C , \frk h  , z ,  P_1 , P_2     \right)  
= \left( \BB C , \frk h ,  P_1(t) , P_1^{P_1(t)} , P_2^{P_1(t)} \right)  ,
\eqe
here using the notation of Definition~\ref{defn-curve-re-root}. 
From~\eqref{eqn-curve-re-root}, one can check that if $s,t\in [0,1]$ and neither $P_1(t)$ nor $P_1([s+t])$ is hit more than once by either $P_1$ or $P_2$, then
\eqb \label{eqn-re-root-compose}
\mcl R_s \circ \mcl R_t = \mcl R_{[s+t]} ,\quad \text{where} \quad [s+t] := s+t - \lfloor s + t \rfloor .
\eqe

Now let $t\in [0,1]$ be fixed and let $T$ be sampled uniformly from Lebesgue measure on $[0,1]$, independently from everything else. Then $[t+T]$ is also sampled uniformly from Lebesgue measure on $[0,1]$ independently from everything else. Therefore, if $\kappa_1=\kappa_2 = 8$ then Lemma~\ref{lem-sle-lqg-re-root0} gives
\eqb \label{eqn-sle-lqg-re-root-shift}
\mcl R_T \left(\BB C ,h , \infty , \eta_1,\eta_2 \right)  \eqD  \left(\BB C ,h , \infty , \eta_1,\eta_2 \right) \eqD \mcl R_{[t+T]} \left(\BB C ,h , \infty , \eta_1,\eta_2 \right) ,
\eqe
as curve-decorated quantum surfaces. Since $\eta_1(T)$ and $\eta_1([t+T])$ are sampled uniformly from $\mu_h$, independently from $\eta_1$ and $\eta_2$, a.s.\ each of $\eta_1(T)$ and $\eta_1([t+T])$ is hit only once by each of $\eta_1$ and $\eta_2$. So, we can apply~\eqref{eqn-re-root-compose} followed by the second equality in~\eqref{eqn-sle-lqg-re-root-shift} to obtain
\eqb \label{eqn-sle-lqg-re-root-new}
\mcl R_t \circ \mcl R_T \left(\BB C ,h , \infty , \eta_1,\eta_2 \right) \eqD  \left(\BB C ,h , \infty , \eta_1,\eta_2 \right) .
\eqe 
By the first equality in~\eqref{eqn-sle-lqg-re-root-shift}, the equality in law~\eqref{eqn-sle-lqg-re-root-new} also holds with $\mcl R_t\circ\mcl R_T(\BB C ,h , \infty,\eta_1,\eta_2)$ replaced by $\mcl R_t\left(\BB C ,h , \infty , \eta_1,\eta_2 \right)$ on the left. That is,~\eqref{eqn-sle-lqg-re-root} holds.
\end{proof}

\section{Physics background for the meander conjecture}
\label{sec-meander-conjecture}

In this section we review the developments in statistical physics that lead to Conjecture~\ref{conj-meander}, namely,  the scaling limit of meanders is described by LQG  with matter central charge $\ccM = -4$, decorated by two independent SLE$_8$ curves. Section~\ref{subsec:Hamiltonian} gives a quick overview of the ideas that lead to Conjecture~\ref{conj-meander}, then the consecutive sections review in more detail the reasoning. At the end of the section (Section~\ref{sec-cyclic-conj}), we will also give a heuristic justification of Conjecture~\ref{conj-cyclic-meander} using Conjecture~\ref{conj-meander}.

\subsection{Meanders, Hamiltonian cycles, and spanning trees}\label{subsec:Hamiltonian}

The scaling limits of a large number of 2D statistical physics models are expected to be described by conformal field theories (CFTs); see~\cite{CFT-book} for a review. A key characteristic of a CFT is its central charge $\cc \in \BB R$, which encodes how its partition function responds to deformations of the background metric.  
Although the full field-theoretic description of the scaling limit has only been achieved for a restricted class of models such as the critical Ising model~\cite{BPZ1984}, for a wide range of models, the central charge can be derived from the asymptotic behavior of the partition function of the model on periodic lattices.
For example, by Kirchhoff's matrix tree theorem, the scaling limit of the uniform spanning tree on a large class of 2D lattices should be a CFT with central charge $\cc=-2$.
 
 The Peano curve of a spanning tree on a 2D lattice is  equivalent to the Hamiltonian cycle  on a related directed lattice~\cite{Kasteleyn-Hamiltonian}, which means that the uniform directed Hamiltonian cycle on a large class of directed lattices should also have central charge $\cc=-2$. However, for the uniform undirected Hamiltonian cycle on the honeycomb lattice,  the central charge  turns out to be $\cc=-1$ instead of $\cc=-2$~\cite{BN-FPL,BSY-FPL}.  
The honeycomb lattice is an example of a cubic planar map whose dual graph is a triangulation.
As explained in~\cite{GKN-Hamiltonian}, the central charge discrepancy between directed and undirected Hamiltonian cycles is due to a  special property of the honeycomb lattice, that is, its dual graph is an Eulerian triangulation (i.e.\ each vertex has an even degree). Since this property does not hold for a typical cubic planar map (whose dual is a triangulation), the  uniform random triangulation  decorated by a Hamiltonian cycle on its dual should  be in the same universality class as a uniform spanning tree decorated map. The scaling limit of this latter decorated map is expected to be $\gamma$-LQG with matter central charge $\ccM = -2$ (equivalently, $\gamma\stackrel{\eqref{eqn:lqg-ccM}}{=}\sqrt{2}$; recall the discussion in Remark~\ref{remark-lqg-physics}), decorated by an SLE$_8$ curve; see, e.g.,~\cite[Appendix A]{shef-burger}.

A meander can be viewed as a pair of Hamiltonian cycles on a planar map whose dual is an Eulerian quadrangulation, with the additional constraint that at each vertex the two cycles cross each other. Following the line of reasoning as in~\cite{GKN-Hamiltonian} with a crucial new insight, Di Francesco, Golinelli, and Guitter~\cite{dgg-meander-asymptotics} 
argued that the uniformly sampled meander should be in the same universality class as random planar maps decorated  by a \emph{pair} of independent spanning trees. Since the central charge is believed to be additive for independent models, such planar maps are expected to converge to LQG with matter central charge $\ccM =(-2) + (-2) = -4$ (equivalently, $\gamma\stackrel{\eqref{eqn:lqg-ccM}}{=}\sqrt{\frac13 \left( 17 - \sqrt{145} \right)}$), decorated by a pair of independent SLE$_8$ curves; as stated in Conjecture~\ref{conj-meander}.

 In the rest of this section, we review in more detail the reasoning of~\cite{dgg-meander-asymptotics}.  The key is a subtle relation between the fully packed and the dense O$(n)$ loop model on different lattices. See Figure~\ref{fig-table-On-models} for a summary of the models discussed in the following sections.
 
 \begin{figure}[ht!]
 	\begin{center}
 		\includegraphics[scale=.771]{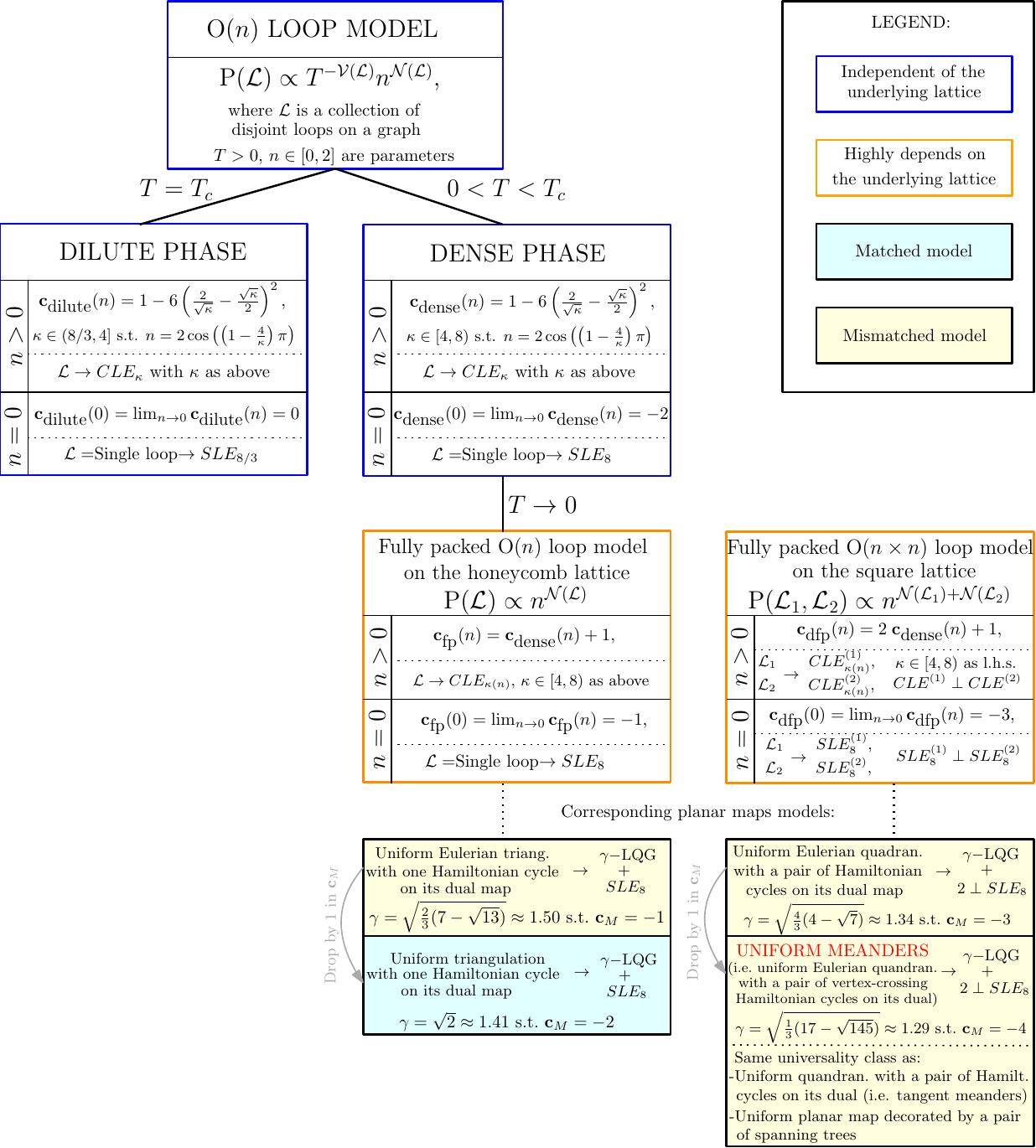}
 		\vspace{-0.01\textheight}
 		\caption{A table summarizing the discussions on the O$(n)$ and related models in Section~\ref{sec-meander-conjecture}.  For each box (i.e.\ for each model), above the dotted line there is the  expression for the central charge of the associated CFT; below the dotted line there is the corresponding scaling limit. The two bottom boxes summarize the conjectures explained in Sections \ref{sec-fully-packed-maps} and \ref{subsec:DGG} for the corresponding planar maps models. We recall that the relation between $\gamma$ and $\cc_M$ is given in \eqref{eqn:lqg-ccM}. A legend for the use of the colors is given on the top right: recall that the definitions of \emph{matched} and \emph{mismatched} models are given in Remark \ref{remark-mismatched}.}
 		\label{fig-table-On-models}
 	\end{center}
 	\vspace{-1em}
\end{figure}

\subsection{Positive temperature O($n$) loop model and its scaling limit}\label{subsec:pos_meas}

For a fixed $n>0$, the O($n$) loop model  on a graph is  the statistical physics model where a configuration $\mathcal L$ is a collection of disjoint loops (i.e.\ simple cycles). Given the  temperature  $T>0$,  the weight of each configuration is $T^{-\mathcal V(\mathcal L)} n^{\mathcal N(\mathcal L)}$, where $\mathcal V(\mathcal L)$ is the total number of vertices occupied by all loops in $\mathcal L$ and $\mathcal N(\mathcal L)$ is the total number of loops in $\mathcal L$. When $n\in (0,2]$, for the O($n$) loop model on a planar  lattice, there exists a critical temperature $T_c>0$ such that the following is true. For $T = T_c$, the scaling limit of the O$(n)$ loop model is conjecturally described by a CFT of central charge 
\eqb
\cc_{\textrm{dilute}} (n) := 1- 6\left(\frac{2}{\sqrt\kappa} -\frac{\sqrt \kappa}{2} \right)^2  , \quad 
\text{where $\kappa \in (8/3,4]$ solves $n=2\cos\left(\left(1-\frac{4}{\kappa}\right)\pi\right)$}.
\eqe
For $0<T<T_c$,   the scaling limit is instead conjecturally described by a CFT with central charge 
\eqb \label{eq:cdense}
\cc_{\textrm{dense}} (n) =1- 6\left(\frac{2}{\sqrt\kappa} -\frac{\sqrt \kappa}{2} \right)^2  ,\quad 
\text{where $\kappa \in [4,8)$ solves $n=2\cos\left(\left(1-\frac{4}{\kappa}\right)\pi\right)$}.
\eqe
These two regimes  are called the \textbf{dilute} and \textbf{dense} phases, respectively. See the lecture notes~\cite{Jacobsen-CFT} for more background for the relation between CFT and the O$(n)$ loop model.

After the discovery of SLE~\cite{schramm0},  it was further conjectured that the collection of loops in the dilute and the dense O($n$) loop models converge to the conformal loop ensemble whose parameter $\kappa\in (\frac{8}{3},8)$ is related to $n$ as above (a conformal loop ensemble (CLE) is a conformally invariant collection of SLE$_\kappa$-type loops~\cite{shef-cle}).
For $n=0$, the O$(n)$ loop model is supported on configurations with $\mathcal N(\mathcal L)=1$, whose weight is $T^{-\mathcal V(\mathcal L)}$. For $T=T_c$,  the model becomes the (critical) self-avoiding polygon and the conjectural scaling limit of the single loop is SLE$_{8/3}$. For $T\in (0, T_c)$ the conjectural limit is SLE$_8$, which coincides with the scaling limit of the uniform spanning tree~\cite{lsw-lerw-ust}. The CFT central charges in the two cases are $\cc_{\textrm{dilute}} (0)=\lim_{n\to 0}c_{\textrm{dilute}} (n)=0$ and   $\cc_{\textrm{dense}} (0)=\lim_{n\to 0}c_{\textrm{dense}} (n)=-2$, respectively.  
All the aforementioned scaling limit conjectures are believed to be independent of the underlying lattice.  See~\cite{Peled-On-model} for more background on the O$(n)$ loop model. See~\cite[Section 2.2]{Smirnov-ICM},~\cite[Section 2.3]{shef-cle} and  references therein for the connection between $O(n)$ loop models and CLE.

\subsection{Fully packed O($n$) loop model and its scaling limit}\label{subsec:fully}

As $T\to 0$, the  O($n$) loop model is supported on configurations that maximize $\mathcal V(\mathcal L)$. This is the so-called \textbf{fully packed} O($n$) loop model. In contrast to the dilute and dense phases,  the scaling limit of the fully packed O($n$) loop model highly depends on the underlying lattice.  
In particular, on the  honeycomb lattice,  Blote and Nienhuis~\cite{BN-FPL} argued that on the one hand, the scaling limit of the loops in the fully packed phase is in the same universality class as those in the dense phase, i.e., it should be described by the same CLE. On the other hand, 
the CFT descibing the scaling limit has central charge 
\begin{equation}\label{eq:cfull}
\cc_{\textrm{fp}} (n)=\cc_{\textrm{dense}} (n)+1,
\end{equation}
where $\cc_{\textrm{dense}}$ is as in~\eqref{eq:cdense}.

The physics argument for~\eqref{eq:cfull} is the following. On the honeycomb lattice, a loop configuration $\mathcal L$ maximizes $\mathcal V(\mathcal L)$ when it covers all vertices. Therefore, the fully packed O$(1)$ loop model is the uniform measure on such configurations.
By~\cite{BN-FPL}, a fully packed configuration $\mathcal L$  on the honeycomb lattice can be mapped to an integer-valued function $h(\mathcal L)$ on the dual triangular lattice, which we call the height function. 
Under this mapping,  the fully packed O$(1)$ loop model on the honeycomb lattice is equivalent to a solid-on-solid (SOS) model which produces the same height function. (Explaining the height function construction would be too much of a digression; see~\cite[Figure 1]{GKN-Hamiltonian} and~\cite[Figure~2-4]{NHB-SOS-Ising} for demonstrations.) 
On the other hand, thanks to a connection to the zero-temperature triangular anti-ferromagnetic Ising model, it was already known from ~\cite{BH-SOS-Ising,BN-Ising} that the SOS model has central charge $\cc=1$, and  the height function $h(\mathcal L)$ is believed to converge to the Gaussian free field. This means that $\cc_{\textrm{fp}} (n=1)=1$. Since $\cc_{\textrm{dense}}(n=1)=0$ by~\eqref{eq:cdense}, we get~\eqref{eq:cfull} when $n=1$. 
For $n\neq 1$, although $h(\mathcal L)$ is determined by $\mathcal L$, it is conjectured in~\cite{BN-FPL}  that $\mathcal L$ and $h(\mathcal L)$ are asymptotically decoupled and their joint scaling limit is equivalent to that of the the superposition of the dense O$(n)$ loop model and the $\cc=1$ SOS model.
Since the central charge is additive for independent models,~\eqref{eq:cfull} holds for all $n\in [0,2]$. This relation is supported  by some numerical evidence in~\cite{BN-FPL}. It was put on a solid ground at the physics level of rigor via the exact Bethe ansatz techniques in~\cite{BSY-FPL}.

For $n=0$,  the fully packed loop model on the  honeycomb lattice is supported on configurations where a single loop visits each vertex, namely, it produces a uniformly sampled Hamiltonian cycle on the  honeycomb lattice. According to~\eqref{eq:cfull}, the central charge of the corresponding CFT is $\cc_{\textrm{fp}}(0)=\lim_{n\to 0}\cc_{\textrm{dense}}(n)+1=-1$. Moreover,  the scaling limit of the Hamiltonian cycle should be an SLE$_8$ curve (which is the limit of CLE$_\kappa$ as $\kappa \nearrow 8$).

\subsection{Fully packed O($n$) loop model on random planar maps}
\label{sec-fully-packed-maps}

It was observed by	Guitter, Kristjansen, and Nielsen~\cite{GKN-Hamiltonian}  that the height function  $h(\mathcal L)$ for a fully packed loop configuration $\mathcal L$ does not necessarily make sense if the honeycomb lattice is replaced by an arbitrary planar map whose dual map is a triangulation. It makes sense if and only if the triangulation is Eulerian.\footnote{We recall for later convenience the following equivalent characterizations of a Eulerian triangulation: (1) a planar map such that all its faces are triangles and the degree of each vertex is even; (2) a planar map such that all its faces are triangles, colored in black and white, so that no two adjacent faces have the same color; (3) a planar map such that its dual map is a planar bicubic map, i.e.\ all its vertices have degree three and these vertices are colored in black and white so that any two adjacent vertices have different colors.} 
Recalling the conjectural connection between LQG and random planar maps (Remark~\ref{remark-lqg-physics}), this leads to the conjecture that the scaling limit of a uniform random Eulerian triangulation decorated by a Hamiltonian cycle (i.e.\ by a fully packed O($n=0$) loop model) on its dual map is described by $\gamma$-LQG  with matter central charge 
$$\ccM =\cc_{\textrm{dense}} (n=0)+1= -1$$ 
(equivalently, $\gamma\stackrel{\eqref{eqn:lqg-ccM}}{=} \sqrt{\frac23(7-\sqrt{13})}=\frac{\sqrt{13}-1}{\sqrt{3}}$), decorated by an SLE$_8$ curve. Via the KPZ relation~\cite{kpz-scaling}, it was conjectured and numerically tested in~\cite{GKN-Hamiltonian} that the number of (rooted) Hamiltonian-cycle decorated Eulerian triangulations of size $n$ behaves asymptotically like $A^n n^{-\alpha}$ with $\alpha \stackrel{\eqref{eq:exp}}{=}1 + 4/\gamma^2 = (13+\sqrt{13})/6$.

If we consider an ordinary (not necessarily Eulerian) triangulation decorated by a Hamiltonian cycle (i.e.\ by a fully packed O($n=0$) loop model) on its dual map,  Guitter, Kristjansen, and Nielsen~\cite{GKN-Hamiltonian}  argued that since the $+1$ increment in~\eqref{eq:cfull} disappears, the scaling limit should be  $\gamma$-LQG  with matter central charge $\ccM = \cc_{\textrm{dense}} (n=0)=-2$ (equivalently, $\gamma\stackrel{\eqref{eqn:lqg-ccM}}{=}\sqrt{2}$). In addition to the matrix model justification in~\cite{EGK-Hamiltonian}, we sketch a justification of this assertion using mating of trees~\cite{wedges}. The dual map of a triangulation with a Hamiltonian cycle can be visualized as a  picture similar to Figure~\ref{fig-meander}, where the Hamiltonian cycle is a straight line and other edges on the dual map are on the two sides of the straight line (in this setting, the other edges do not necessarily form a loop).  
This produces a pair of non-crossing matchings that can be viewed as two planar trees. They converge to a pair of independent continuum random trees.
By~\cite[Theorem 1.9]{wedges}, the mating of these trees gives LQG with $\ccM =-2$ (equivalently, $\gamma\stackrel{\eqref{eqn:lqg-ccM}}{=}\sqrt{2}$) decorated by an SLE$_8$ curve which describes the scaling limit of the Hamiltonian cycle (i.e., the interface between the two trees). 
{In other words, a uniform triangulation decorated by a Hamiltonian cycle converges to LQG with $\ccM=-2$ decorated by SLE$_8$ in the so-called \emph{peanosphere sense}~\cite[Section 2.1.1]{wedges}. This type of convergence is not sufficient to imply convergence, e.g., in the Gromov-Haudorff sense or under an appropriate embedding.}

\begin{remark} \label{remark:ddgg}
Although it is not needed for the justification of Conjecture~\ref{conj-meander}, we note that, as explained in Section 3.3 of the recent paper~\cite{ddgg-hamiltonian}, the conjectures in this section can be extended to a triangulation decorated by the fully packed O($n$) loop model on its dual map for $n > 0$. 
In particular, for an Eulerian triangulation decorated by a fully packed O($n$) loop model on its dual map for $n\in(0,1]$,\footnote{As explained in \cite[Section 3.3]{ddgg-hamiltonian} in the case of a uniform random Eulerian triangulation one can consider the fully packed O($n$) loop model only in the range $n\in[0,1]$. Indeed, the case $n\in(1,2]$ should degenerate in the scaling limit to a tree-like structure.} the scaling limit should be described by $\gamma(n)$-LQG  with matter central charge 
\[
\ccM (n) = \cc_{\textrm{dense}} (n)+1\stackrel{\eqref{eq:cdense}}{=}2- 6\left(\frac{2}{\sqrt\kappa} -\frac{\sqrt \kappa}{2} \right)^2, \quad\text{where $\kappa \in [6,8)$ solves } n=2\cos\left(\left(1-\frac{4}{\kappa}\right)\pi\right)
\]
(equivalently, $\gamma(n)\stackrel{\eqref{eqn:lqg-ccM}}{=} \frac{1}{\sqrt{12}}\left(\sqrt{3\left(\kappa+\frac{16}{\kappa}\right)+22}-\sqrt{3\left(\kappa+\frac{16}{\kappa}\right)-26}\right)\in\left(\frac{\sqrt{13}-1}{\sqrt{3}},2\right]$), decorated by a CLE$_{\kappa(n)}$. (Recall that the limit of CLE$_\kappa$ as $\kappa \nearrow 8$ is SLE$_8$, making this conjecture consistent with the $n=0$ case described above.)
For an ordinary (not necessarily Eulerian) triangulation decorated by a fully packed O($n$) loop model on its dual map for $n\in(0,2]$, the scaling limit should be described by $\gamma(n)$-LQG  with matter central charge 
\[
\ccM (n) = \cc_{\textrm{dense}} (n)\stackrel{\eqref{eq:cdense}}{=}1- 6\left(\frac{2}{\sqrt\kappa} -\frac{\sqrt \kappa}{2} \right)^2, \quad\text{where $\kappa \in [4,8)$ solves } n=2\cos\left(\left(1-\frac{4}{\kappa}\right)\pi\right)
\]
(equivalently, $\gamma(n)\stackrel{\eqref{eqn:lqg-ccM}}{=} \frac{4}{\sqrt{k}}\in(\sqrt{2},2]$), decorated by a CLE$_{\kappa(n)}$.
Note that $\gamma$ and $\kappa$ are mismatched (in the sense of Remark~\ref{remark-mismatched}) for random Eulerian triangulations decorated by a fully packed O($n$) loop model on their dual map for $n\in[0,1]$, and are matched for ordinary triangulations decorated by a fully packed O($n$) loop model on their dual map for $n\in[0,2]$.
\end{remark}

\subsection{Meanders and the fully packed O$(n\times m)$ loop model}\label{subsec:DGG}

Di Francesco, Golinelli, and Guitter~\cite{dgg-meander-asymptotics} arrived at their meander conjecture following a similar reasoning as in~\cite{GKN-Hamiltonian}. They first considered  the \textbf{(double) fully packed O}$(n\times m)$ \textbf{loop model} on the square lattice $\BB Z^2$  
with $n,m\in [0,2]^2$. In this model, each configuration is a pair of  loop collections $(\mathcal L_1,\mathcal L_2)$ such that  every edge is visited by either a loop in  $\mathcal L_1$ or a loop in $\mathcal L_2$ but not both, while every vertex  is visited by both types of loops. For $n,m\in (0,2]^2$, the weight of each configuration is $n^{\mathcal N(\mathcal L_1)}m^{\mathcal N(\mathcal L_2)}$, where, as above, $\mathcal N(\mathcal L_i)$ is the number of loops in $\mathcal L_i$ for $i=1,2$. For $n=m=0$, the model is supported on configurations with $\mathcal N(\mathcal L_1)=\mathcal N(\mathcal L_2)=1$ and assigns equal weight to them.  

Again using a mapping to a height function model, Jacobsen and Kondev~\cite{JK-FLP2,JK-DPL} made a similar conjecture as~\eqref{eq:cfull} on the central charge $\cc_{\textrm{dfp}}(n,m)$ of the scaling limit of the fully packed O$(n\times m)$ loop model on $\BB Z^2$. 
In order to define such a mapping to a height function model, Jacobsen and Kondev first considered the uniform  measure on configurations $(\vec{\mathcal L_1},\vec{\mathcal L_2})$ on  $\BB Z^2$, where $(\vec{\mathcal L_1},\vec{\mathcal L_2})$ are two collections of oriented loops satisfying the same conditions above. 
As explained in~\cite[Section 3]{dgg-meander-asymptotics}, Jacobsen and Kondev~\cite{JK-FLP2,JK-DPL} mapped $(\vec{\mathcal L_1},\vec{\mathcal L_2})$ on $\mathbb Z^2$ to a 3D height function $(h_1(\vec{\mathcal L_1},\vec{\mathcal L_2}),h_2(\vec{\mathcal L_1},\vec{\mathcal L_2}),h_3(\vec{\mathcal L_1},\vec{\mathcal L_2}))$, which they conjectured to converge to three independent Gaussian free fields, hence a conformal theory
with central charge $c = 3$. 
Introducing some specific weights for the oriented configurations $(\vec{\mathcal L_1},\vec{\mathcal L_2})$, Jacobsen and Kondev also determined a general expression for the central charge of the scaling limit of this new weighted oriented model (see \cite[Eq.\ (6.4)]{JK-FLP2}). Using a simple relation (given in \cite[Eq.\ (4.1)]{JK-FLP2}) between the weights of the oriented configurations $(\vec{\mathcal L_1},\vec{\mathcal L_2})$ and the weights $(n,m)$ of the fully packed O$(n\times m)$ loop model, i.e.\ of the unoriented configurations $(\mathcal L_1,\mathcal L_2)$, they finally conjectured that the central charge of the scaling limit of the fully packed O$(n\times m)$ loop model on $\BB Z^2$ is given by:
\begin{equation}\label{eq:FPL2}
	\cc_{\textrm{dfp}}(n,m)=\cc_{\textrm{dense}} (n)+\cc_{\textrm{dense}} (m) +1.
\end{equation}
They also conjectured that $\mathcal L_1$ and $\mathcal L_2$ asymptotically decouple, i.e., their scaling limits are independent.
The relation in \eqref{eq:FPL2} has also been justified at a physics level of rigor via the exact Bethe ansatz techniques~\cite{DCN-FPL-exact,JZJ-FPL-exact}. For $n=m=0$, the condition $\mathcal N(\mathcal L_1)=\mathcal N(\mathcal L_2)=1$ means that we have two Hamiltonian cycles on the square lattice whose edge sets are disjoint.
According to~\eqref{eq:FPL2}, the central charge of the limiting CFT is $\cc_{\textrm{dfp}}(0,0)=2\,\cc_{\textrm{dense}} (0) +1=-3$. Moreover, the two Hamiltonian cycles should converge to two independent SLE$_8$ curves (following the same reasoning in the discussion after \eqref{eq:cdense}). 

Recall from Section~\ref{sec-meander} that a meander can be viewed as a triplet $(M,P^1,P^2)$, where $P^1,P^2$ are two Hamiltonian cycles with disjoint edge sets on a 4-regular planar map $M$, i.e.\ a planar map with all vertices of degree four. Moreover, at each vertex the two cycles $P^1$ and $P^2$ cross each other. This implies that the edges on the boundary of each face of the dual map of $M$ alternate between dual edges which cross edges of $P^1$ and dual edges which cross edges of $P^2$. Hence, each face of $M$ has even degree, and so the dual of $M$ is an Eulerian quadrangulation.

As explained in~\cite[Section 3]{dgg-meander-asymptotics}, the height function mapping of Jacobsen and Kondev described in the previous section makes sense on a general planar map $G$ (instead of $\mathbb Z^2$) if and only if the dual map of $G$ is an Eulerian quadrangulation. Since the dual of $M$ is an Eulerian quadrangulation, one can associate to $(M,P^1,P^2)$ a height function $(h_1,h_2,h_3)$ as above.  However,  the additional constraint that the two cycles must cross at each vertex  yields that $(h_1,h_2,h_3)$ must lie on a 2D subspace. This effect exactly cancels the $+1$ in~\eqref{eq:FPL2}. (See \cite[Figure 2]{dgg-meander-asymptotics} and the discussion below that figure.)
Therefore, the scaling limit should be $\gamma$-LQG with $\ccM = 2\,\cc_{\op{dense}}(0) = -4$ (equivalently, $\gamma \stackrel{\eqref{eqn:lqg-ccM}}{=} \sqrt{\frac13(17-\sqrt{145})}$), decorated by two independent SLE$_8$ curves as in Conjecture~\ref{conj-meander}. This leads to several enumeration conjectures in~\cite{dgg-meander-asymptotics} on meanders which are supported by numerical evidence  in the  follow-up works~\cite{DFGJ-numerical,Jensen-num-meanders}. In particular, they conjectured that the number of meanders grows as $A^n n^{-\alpha}$, where $A \approx 12.26$ and $\alpha \stackrel{\eqref{eq:exp}}{=} 1+4/\gamma^2 =  (29 + \sqrt{145})/12$.

Although not mentioned explicitly in~\cite{dgg-meander-asymptotics}, their argument naturally leads to the following scaling limit conjectures in addition to Conjecture~\ref{conj-meander}.
Consider a uniform 4-regular planar map decorated by a pair of edge-disjoint Hamiltonian cycles, without the constraint that the cycles cross at each vertex. Such configurations are called \emph{tangent meanders} in \cite{DFGJ-numerical}. The scaling limit should be described by $\gamma$-LQG with $\ccM=-4$ (equivalently, $\gamma \stackrel{\eqref{eqn:lqg-ccM}}{=} \sqrt{\frac13(17-\sqrt{145})}$).  If we further condition on the event that the dual quandrangulation is Eulerian, then we should instead get $\gamma$-LQG with $\ccM=-3$ (equivalently, $\gamma \stackrel{\eqref{eqn:lqg-ccM}}{=} \sqrt{\frac 4 3 (4-\sqrt 7)}$). 
Finally, consider a uniformly sampled pair of Hamiltonian cycles on $\mathbb Z^2$ with the condition that they form a meander. Then the central charge for the corresponding CFT should be $\cc=-4$. In all cases, the two Hamiltonian cycles  
should converge to two independent SLE$_8$ curves.

As in Remark \ref{remark:ddgg}, the same reasoning as in the conjecture for meanders can be extended to get a scaling limit conjecture for a 4-regular planar map decorated by the fully packed O$(n \times m)$ loop model for $(n, m) \in [0, 2]^2$, subject to the constraint that the loops cross at each vertex.  Recall that $\cc_{\textrm{dense}} (n) \stackrel{\eqref{eq:cdense}}{=}1- 6\left(\frac{2}{\sqrt{\kappa(n)}} -\frac{\sqrt{\kappa(n)}}{2} \right)^2$, where $\kappa(n) \in [4,8]$ solves $n=2\cos\left(\left(1-\frac{4}{\kappa(n)}\right)\pi\right)$. The following conjecture follows from exactly the same reasoning as the meander conjecture above.

\begin{conj} \label{conj-nm}
	For $k\in\mathbb N$, let $\mathcal C_{k}$ be the set of triples $(M ,\mathcal L_1, \mathcal L_2)$, where $M $ is a 4-regular planar map with $2k$ vertices; and $(\mathcal L_1, \mathcal L_2)$ is a pair of collections of loops on $M $ with the following properties:
	\begin{itemize} 
		\item every edge of $M $
		is visited by exactly one loop in $\mathcal L_1 \cup \mathcal L_2$; 
		\item every vertex of $M$ is visited by exactly one loop in $\mathcal L_1$
		and exactly one loop in $\mathcal L_2$;
		\item every pair of loops $(\ell_1, \ell_2) \in \mathcal L_1 \times \mathcal L_2$ cross each other at every shared vertex (i.e., the edges of $\ell_1$ and $\ell_2$ in cyclic order around the vertex alternate).
	\end{itemize}
	Fix $(n, m) \in [0, 2]^2$ such that\footnote{If $\cc_{\textrm{dense}} (n)+\cc_{\textrm{dense}} (m)> 1$, the planar map should degenerate in the scaling limit to a tree-like structure.} $\cc_{\textrm{dense}} (n)+\cc_{\textrm{dense}} (m)\leq 1$.
	Let $(M ,\mathcal L_1, \mathcal L_2)$ be sampled from the uniform measure on $\mathcal C_k$ weighted by $n^{\mathcal N(\mathcal L_1)}m^{\mathcal N(\mathcal L_2)}$.
	Then $(M ,\mathcal L_1, \mathcal L_2)$ converges under an appropriate scaling limit to a Liouville quantum gravity sphere with matter central charge
	$$\ccM (n,m) = \cc_{\mathrm{dense}} (n)+\cc_{\mathrm{dense}} (m),$$
	equivalently, with coupling constant $\gamma(n,m)$ satisfying $\ccM(n,m) \stackrel{\eqref{eqn:lqg-ccM}}{=} 25-6\,\left(2/\gamma(n,m) + \gamma(n,m)/2\right)^2$,
	decorated by a whole-plane CLE$_{\kappa(n)}$ and an independent whole-plane CLE$_{\kappa(m)}$. If $\kappa(n)=8$ or $\kappa(m)=8$ then CLE$_8$ is interpreted as SLE$_8$.
\end{conj}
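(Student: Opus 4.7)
The plan is to attack Conjecture~\ref{conj-nm} by extending the heuristic of Di Francesco--Golinelli--Guitter and Jacobsen--Kondev to a rigorous scaling limit statement, following the template that has been successful in matched cases via the mating-of-trees approach of \cite{wedges}. The first step is to replace the weighted sum over unoriented configurations by a sum over \emph{oriented} loop configurations $(\vec{\mathcal L_1},\vec{\mathcal L_2})$ on $M$ using the Jacobsen--Kondev local weights of orientations, so that $n^{\mathcal N(\mathcal L_1)} m^{\mathcal N(\mathcal L_2)}$ becomes a product of vertex weights. One would then try to realize $(M,\vec{\mathcal L_1},\vec{\mathcal L_2})$ as a six-vertex-type model on a random $4$-regular map and, using the dual Eulerian quadrangulation structure that is forced by the crossing constraint, construct a discrete height function $(h_1,h_2,h_3)$ on faces that lives on a two-dimensional sublattice of $\BB Z^3$ (the crossing constraint kills one degree of freedom, which is the geometric content of ``the $+1$ in~\eqref{eq:FPL2} cancels'').

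Second, I would try to prove tightness of the curve-decorated metric measure space $(M_k,\mathcal L_1,\mathcal L_2)$ equipped with graph metric and counting measure, in the Gromov--Hausdorff-type topology of~\cite{gwynne-miller-uihpq}. A natural route is to exhibit a bijection between $\mathcal C_k$ (with its Boltzmann weight) and a pair of decorated objects whose mating we already understand: for instance, to decompose $(M,\mathcal L_1,\mathcal L_2)$ into two independent $\mathrm{CLE}_{\kappa(n)}$ and $\mathrm{CLE}_{\kappa(m)}$-type decorated maps glued along a shared quadrangulation, in analogy with the tree-pair decomposition sketched in Section~\ref{sec-fully-packed-maps}. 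Once tightness and the joint scaling limit of $(h_1,h_2)$ to a pair of (possibly correlated) free fields is established, identifying the limit as $\gamma(n,m)$-LQG decorated by two independent whole-plane $\mathrm{CLE}_{\kappa(n)}$ and $\mathrm{CLE}_{\kappa(m)}$ would proceed by matching moments / correlation functions against the Liouville CFT + CLE predictions, in the spirit of the recent LCFT breakthroughs \cite{GKRV-sphere,GKRV-Segal}.

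The main obstacle, and the reason even the $(n,m)=(0,0)$ special case (Conjecture~\ref{conj-meander}) is wide open, is that this falls squarely in the \emph{mismatched} regime $\kappa\notin\{\gamma^2,16/\gamma^2\}$ (see Remark~\ref{remark-mismatched}): when $n,m<2$, $\gamma(n,m)$ and $\kappa(n),\kappa(m)$ satisfy no matching relation, so the powerful mating-of-trees identities of \cite{wedges} are unavailable and we have no analog of a Brownian encoding for the pair $(\eta_1,\eta_2)$. A more tractable intermediate goal is to first prove the assertion for an Eulerian triangulation decorated by a single fully packed $\mathrm{O}(n)$ loop model (cf.\ Remark~\ref{remark:ddgg}), where only one of the two coupled CLEs is present and the height function is one-dimensional; this already requires extending CLE/LQG couplings to mismatched parameters. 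A second serious obstacle is combinatorial: one needs an enumeration or bijection that couples $(M,\mathcal L_1,\mathcal L_2)$ with a well-understood random planar map model in order to transfer any known scaling limit; at present no analog of the Mullin-type or Bernardi-type bijections is known in this doubly-decorated fully packed setting. Consequently, I expect genuine progress will require either a new combinatorial bijection (perhaps reducing $\mathcal C_k$, with its weights, to a pair of spanning-tree-decorated maps or to a random planar map decorated by CLE loops in the dense phase) or a new probabilistic coupling on the LQG side that permits mismatched pairs of space-filling curves.
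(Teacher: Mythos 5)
Conjecture~\ref{conj-nm} is exactly that: a conjecture. The paper does not prove it and does not claim to; it is motivated entirely by the physics heuristics laid out in Section~\ref{sec-meander-conjecture} (the Jacobsen--Kondev height-function picture, the additivity of central charge for decoupled loop ensembles, and the Di Francesco--Golinelli--Guitter observation that the crossing constraint kills one height degree of freedom and cancels the $+1$ in~\eqref{eq:FPL2}). Your write-up correctly recognizes this and does not present a proof; instead it records a plausible research program together with the genuine obstacles.

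What you write is consistent with the paper's own heuristic justification, and you have accurately identified the two blocking issues the paper itself emphasizes: the parameter mismatch $\kappa\notin\{\gamma^2,16/\gamma^2\}$ (Remark~\ref{remark-mismatched}), which disables the mating-of-trees machinery of~\cite{wedges}, and the absence of any Mullin/Bernardi-type bijection encoding a doubly fully packed loop-decorated map. Your suggested intermediate target (a single fully packed $O(n)$ loop model on a random Eulerian triangulation) is also the one the paper points to in Remark~\ref{remark:ddgg} and Problem~\ref{prob-phy}. One small caution: your sketch in the second paragraph appeals to establishing ``the joint scaling limit of $(h_1,h_2)$ to a pair of (possibly correlated) free fields'' and then matching against Liouville CFT + CLE predictions; note that in the mismatched regime there is at present no rigorous coupling of a CLE$_\kappa$ with an independent $\gamma$-LQG surface in which LCFT correlation functions yield such an identification, so even this ``identification'' step is not merely technical. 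In short, your proposal is a reasonable and honest road map for an open problem, not a proof, and that is the correct posture given that the statement is a conjecture in the paper.
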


\subsection{Convergence of the cyclic meandric permutation to the identity}
\label{sec-cyclic-conj}

Recall Conjecture~\ref{conj-cyclic-meander}, which asserts that the cyclic meandric permutation $\tau_{\ell_n}$ associated with a uniform meander $\ell_n$ of size $n$ converges to the identity permuton. 
In this subsection we will give a heuristic justification of Conjecture~\ref{conj-cyclic-meander} using Conjecture~\ref{conj-meander}. To prove Conjecture~\ref{conj-cyclic-meander}, one would need to show that with high probability when $n$ is large, $| \tau_{\ell_n}(j) - j|$ is of order $o(n)$ for all but all but a negligible fraction of the values of $j\in [1,2n]\cap\BB Z$. 

To see why this should be true, consider the setting of Section~\ref{sec-permuton-def} with $\eta_1,\eta_2$ independent SLE$_8$ curves and $\gamma = \sqrt{\frac13(17-\sqrt{145})}$, so that $\perm$ is the meandric permuton. Let $  \psi  , \phi :[0,1] \rta [0,1]$ be two Lebesgue measurable function chosen so that $\eta_2(t) = \eta_1(\psi(t))$ and $\eta_2(\phi(t)) = \eta_1(t)$. We claim that a.s.\ for Lebesgue-a.e.\ $t\in [0,1]$, 
\eqb \label{eqn-composition-to-zero}
\lim_{\ep\rta 0} \sup_{s\in [-\ep , \ep]} |\psi(\phi(t) + s) - t| = 0 .
\eqe
If we think of $\eta_1$ and $\eta_2$ as the continuum limits of the loop and the line, respectively, for the meander $\ell_n$, then we can think of $\psi$ and $\phi$ as the continuum analogs of $\sigma_{\ell_n}^{-1}$ and $\sigma_{\ell_n}$, respectively. Hence, if we assume Conjecture~\ref{conj-meander}, then~\eqref{eqn-composition-to-zero} should imply that for a typical $t\in [0,1]$, 
\eqb \label{eqn-cyclic-to-zero}
 \lim_{\ep\rta 0} \lim_{n\rta\infty} \sup_{k \in [-2\ep n , 2\ep n] \cap\BB Z} \frac{1}{2n} \left|\sigma_{\ell_n}^{-1}(\sigma_{\ell_n}(\lfloor 2t n \rfloor) + k) - 2 t n \right| = 0 . 
\eqe 
By the formula $\tau_{\ell_n}(j) = \sigma_{\ell_n}^{-1}(\sigma_{\ell_n}(j)+1)$ from~\eqref{eqn-permutation-relation}, the relation~\eqref{eqn-cyclic-to-zero} for Lebesgue-a.e.\ $t\in [0,1]$ implies the desired convergence of $\tau_{\ell_n}$ to the identity permuton. 

We now explain why~\eqref{eqn-composition-to-zero} is true. For $\mu_h$-a.e.\ $z\in\BB C$, each of $\eta_1$ and $\eta_2$ hits the point $z$ exactly once (c.f.\ the proof of Lemma~\ref{lem-permuton-defined}). Equivalently, for Lebesgue-a.e.\ $t\in [0,1]$, then each of $\eta_1$ and $\eta_2$ hits $\eta_1(t)$ exactly once. For such a time $t$, the point $\eta_1(t)$ is not on the boundary of $\eta_1([t-\delta,t+\delta])$ for any $\delta > 0$ (otherwise, $\eta_1$ would have to hit $\eta_1(t)$ at a time not in $[t-\delta,t+\delta]$). Hence, $\eta_1([t-\delta,t+\delta])$ contains a neighborhood of $\eta_1(t)$. Since $\eta_2$ is a continuous curve and $\eta_2$ hits $\eta_1(t)$ exactly once (namely, at the time $\phi(t)$), there exists $\ep > 0$ such that 
\eqbn
\eta_2([\phi(t)-\ep,\phi(t)+\ep]) \subset \eta_1([t-\delta,t+\delta])  \quad \text{which implies} \quad \sup_{s\in [-\ep , \ep]} |\psi(\phi(t) + s) - t| \leq \delta .
\eqen 
Since $\delta>0$ is arbitrary, this gives~\eqref{eqn-composition-to-zero}.

\section{Open problems}
\label{sec-open-problems}

{We state several additional open problems related to this work, besides proving the conjectures stated in Section~\ref{sec-intro}. At the end of the section, we comment briefly on how hard we expect the problems to be.}

It is difficult to imagine how one would make a direct connection between a subsequential limit of meandric permutons and SLE-decorated LQG. Consequently, in order to prove Conjecture~\ref{conj-meander-permuton} one would likely first need to solve the following problem. 

\begin{prob} \label{prob-char}
Is there a description of the meandric permuton which does not use SLE or LQG?
\end{prob}

Possible solutions to Problem~\ref{prob-char} might include an axiomatic characterization of the meandric permuton or a direct construction of this permuton in terms of Brownian motion (similar to the construction of the skew Brownian permutons in~\cite{borga-skew-permuton}). We emphasize that Conjecture~\ref{conj-meander-permuton} is likely to be difficult to prove even if one finds a nice solution to Problem~\ref{prob-char}, since meanders are notoriously difficult to analyze. 

\medskip

It seems quite difficult to get a lower bound for the length of the longest increasing subsequence for classes of random permutations which are known or expected to converge to the random permutons considered in this paper. Currently, the only available lower bounds are the essentially trivial ones, namely $\op{LIS}(\sigma_n) \geq \op{const} \times |\sigma_n|^{1/2}$. We recall that the Erd\"os-Szekeres theorem~\cite{erdos-szekeres} gives $\op{LIS}(\sigma) \times \op{LDS}(\sigma ) \geq n$ for every $\sigma\in \mcl S_n$. 

\begin{prob} \label{prob-lis-lower}
For $n\in\BB N$, let $\sigma_n$ be a uniform Baxter, semi-Baxter, strong-Baxter, or meandric permutation of size $n$.
Show that there exists $\alpha >1/2$ (depending only on the type of permutation, not on $n$) such that $\op{LIS}(\sigma_n) \geq n^\alpha$ with high probability when $n$ is large.
\end{prob}

{We note that the analog of Problem~\ref{prob-lis-lower} in the case of permutations sampled from the Brownian separable permuton (the $\rho \to 1$ limit of skew Brownian permutons) is solved in~\cite{bdg-separable-lis}. }
Problem~\ref{prob-lis-lower} is closely related to the following SLE/LQG problem.

\begin{prob} \label{prob-lis-sle}
Assume that we are in the setting of Section~\ref{sec-permuton-def}. What is the maximal Hausdorff dimension (with respect to the LQG metric $D_h$) of a set $X\subset \BB C$ which is hit in the same order by $\eta_1$ and $\eta_2$? 
\end{prob}

If $\Delta$ is the maximal Hausdorff dimension in Problem~\ref{prob-lis-lower} and $d_\gamma$ is the Hausdorff dimension of $\BB C$ w.r.t.\ $D_h$, then it is natural to conjecture that $\op{LIS}(\sigma_n) \approx |\sigma_n|^{\Delta/d_\gamma}$ for sufficiently ``nice'' sequences of permutations $\{\sigma_n\}_{n\in\BB N}$ which converge in law to $\perm$. 

\medskip

The following question is motivated by the formula for pattern densities in Theorem~\ref{thm:density}. 
We are especially interested in the case when $\kappa = 8$, since this case corresponds to the meandric permuton.

\begin{prob} \label{prob-sle-order}
Let $\kappa > 4$ and let $\eta$ be a whole-plane space-filling SLE$_\kappa$ from $\infty$ to $\infty$. Also let $z_1,\dots,z_n\in\BB C$ be distinct points. Is there an exact formula for the function 
\eqb
P_\kappa(z_1,\dots,z_n) = \BB P\left[\text{$\eta$ hits $z_j$ before $z_{j+1}$, $\forall j =1,\dots,n-1$} \right] ?
\eqe
\end{prob}

\medskip

Recall Conjecture~\ref{conj-cyclic-meander}, which asserts that a uniform \emph{cyclic} meandric permutation converges to the identity permuton. We explained in Section~\ref{sec-cyclic-conj} why this conjecture should follow from Conjecture~\ref{conj-meander}. 

\begin{prob} \label{prob-cyclic}
Is there an unconditional proof of Conjecture~\ref{conj-cyclic-meander} which does not use Conjecture~\ref{conj-meander}?
\end{prob} 

\medskip
 
The techniques and many of the results of this paper apply to a wide range of different random permutons which can be described in terms of SLE and LQG. It is therefore natural to ask the following. 

\begin{prob} \label{prob-other}
Are there any other interesting random permutons, besides the meandric permuton and the skew Brownian permutons, which can be described by SLE-decorated LQG in the manner of Section~\ref{sec-permuton-def}?
\end{prob}

\medskip

The physics background provided in Section~\ref{sec-meander-conjecture} motivates several conjectures about scaling limits. We highlight the following problem, which is motivated by the arguments of Sections~\ref{sec-fully-packed-maps}~and~\ref{subsec:DGG}.

\begin{prob} \label{prob-phy}
Prove that	the scaling limit of a uniform Eulerian triangulation decorated by a Hamiltonian cycle on its dual  is described by LQG  with matter central charge $\ccM = -1$ (i.e., $\gamma = \sqrt{\frac23(7-\sqrt{13})}$) decorated by an SLE$_8$ curve. Moreover, prove the other scaling limit conjectures from Section~\ref{subsec:DGG} (see also Conjecture~\ref{conj-nm} and the two bottom boxes in the table in Figure~\ref{fig-table-On-models}).
\end{prob}

{All of the conjectures and open problems stated in this paper seem to be quite difficult: we have spent some time on each of them and were unable to solve them. However, we expect that some are likely to be more accessible than others. 

Probably the easiest of the open problems in this paper is Problem~\ref{prob-lis-lower}, but it still seems to require some non-trivial new ideas. Of course, computing the exact exponent for the LIS (or the exact optimal dimension in Problem~\ref{prob-lis-sle}) is likely to be much harder than just proving a non-trivial lower bound. Problem~\ref{prob-sle-order} seems like it could also conceivably be accessible with current techniques. 

Typically, proving that a discrete model converges to SLE-decorated LQG  requires some sort of combinatorial miracle to get a handle on the discrete model, plus a substantial amount of additional work to deduce convergence from such a combinatorial miracle. Consequently, we expect that Conjectures~\ref{conj-meander}, \ref{conj-meander-permuton}, and~\ref{conj-nm} and Problem~\ref{prob-phy} are the most difficult of the conjectures and open problems stated in this paper. Proving convergence of meandric permutations (Conjecture~\ref{conj-meander-permuton}) might be easier than proving convergence of meanders to SLE-decorated LQG in, e.g., the Gromov-Hausdorff sense since one gets tightness for free, so one just has to identify the scaling limit. This would likely require solving Problem~\ref{prob-char}, which seems hard, but which is a purely continuum problem so could conceivably be done without finding any combinatorial miracle for meanders. One would also need some new tools for analyzing uniform meanders in order to check the hypotheses of any hypothetical characterization theorem for the subsequential limits.

Conjecture~\ref{conj-cyclic-meander} and Problem~\ref{prob-cyclic} appear to require some new insights about meanders, but could likely be done with much less ``integrability'' than what would be needed to get convergence to SLE and LQG.  }

\bibliography{cibib,cibib2}
\bibliographystyle{hmralphaabbrv}

\end{document}